\theoremstyle{plain}
\newenvironment{theorem}[2][Theorem]{\begin{trivlist}
\item[\hskip \labelsep {\bfseries #1}\hskip \labelsep {\bfseries #2}]}{\end{trivlist}}
\newtheorem{thm}{Theorem}[section]
\newtheorem{lem}[thm]{Lemma}
\newtheorem{prop}[thm]{Proposition}
\newtheorem{cor}[thm]{Corollary}
\newtheorem{conj}[thm]{Conjecture}
\newtheorem{defn}[thm]{Definition}
\newtheorem{rmk}[thm]{Remark}
\newtheorem{exam}[thm]{Example} 
\DeclareMathOperator{\id}{id}
\DeclareMathOperator{\Stab}{Stab}
\DeclareMathOperator{\Fix}{Fix}
\newcommand{\NN}{\mathbb{N}}      
\newcommand{\ZZ}{\mathbb{Z}}      
\newcommand{\QQ}{\mathbb{Q}} 
\newcommand{\RR}{\mathbb{R}}      
\newcommand{\CC}{\mathbb{C}}     
\newcommand{\DD}{\mathbb{D}}      
\newcommand{\HH}{\mathbb{H}}
\DeclareMathOperator{\SLC}{SL_2(\CC)}
\DeclareMathOperator{\SLR}{SL_2(\RR)}
\DeclareMathOperator{\SLZ}{SL_2(\ZZ)}
\begin{document}

\title[A Sequence of Algebraic Integer Relation Numbers which Converges to $4$]{A Sequence of Algebraic Integer Relation Numbers \\ which Converges to $4$}

\author{Wonyong Jang }
\author{Kyeongro Kim}
\address{Department of Mathematical Sciences, KAIST,
291 Daehak-ro, Yseong-gu, 
34141 Daejeon, South Korea}
\email{jangwy@kaist.ac.kr \& cantor14@kaist.ac.kr}

\maketitle
\begin{abstract}
Let $\alpha \in \RR$ and let 
$$A=\begin{bmatrix} 1 & 1 \\ 0 & 1\end{bmatrix} \ \text{and} \ B_{\alpha} = \begin{bmatrix} 1 & 0 \\ \alpha & 1\end{bmatrix}.$$
The subgroup $G_\alpha$ of $\SLR$ is a group generated by the matrices $A$ and $B_\alpha$. 
In this paper, we investigate the property of the group $G_\alpha.$  We construct a generalization of the Farey graph for the subgroup $G_\alpha.$ This graph determines whether the group $G_\alpha$ is a free group of rank $2$. 
More precisely, the group $G_\alpha$ is a free group of rank $2$ if and only if the graph is tree. 
In particular, we show that if $1/2$ is a vertex of the graph, then $G_\alpha$ is not a free group of rank $2$. 
Using this, we construct a sequence of real numbers so that the sequence converges to $4$ and each number has the corresponding group that is not  a free group of rank $2$. It turns out that the real numbers are algebraic integers.

\smallskip
\noindent \textbf{Keywords.} Free groups, relation number, generalized Farey graph, circle actions.

\smallskip
\noindent \textbf{MSC classes:} 20F65, 37C85, 37E10, 57M60, 20E05.

\end{abstract}
\vspace{1.2cm}


\section{Introduction} \label{Introduction}
 Let $\alpha$ be a complex number and let  
 $$A=\begin{bmatrix}
1 & 1 \\ 0 & 1
\end{bmatrix} \ \textnormal{ and } \ B_{\alpha}=\begin{bmatrix}
1 & 0 \\ \alpha & 1
\end{bmatrix}$$ be parabolic matrices in $\SLC.$ We then consider the subgroup of $\SLC$ generated by $A$ and $B_\alpha,$ and  denote this subgroup by $G_\alpha.$ 
The complex number $\alpha$ is called a \textsf{free number} if $G_{\alpha}$ is isomorphic to the free group of rank $2$. Otherwise, we say that the complex number $\alpha$ is a \textsf{relation number}. This terminology was suggested by Kim and Koberda \cite{kim2019non}.

Our main goal is to characterize relation numbers. Indeed, since transcendental numbers are free numbers \cite{MR0003414}, almost all complex numbers are free numbers.
Also, due to Brenner \cite{MR75952} and Sanov \cite{MR0022557}, it is  known that if $\alpha$ is a real number and $|\alpha| \geq 4$, then $\alpha$ is a free number.
 More generally, if $\alpha$ is in the Riley slice of the Schottky space, then the group $G_\alpha$ is free and discrete  \cite{MR1272421}.
For simple descriptions of free numbers, see \cite{MR258975}, \cite{MR94388} and \cite{MR505107}.
However, in the complement of the Riley slice, the characterization of free numbers has not been completed. 
It is thus meaningful to understand the complement of the Riley slice. 

From now on, we will focus on the complement of the Riley slice.
 As mentioned in \cite{MR3621679}, if two numbers $a$ and $b$ in $\CC$ are algebraically conjugate, the corresponding Galois conjugation gives a group isomorphism between $G_a $  and $G_b$. This implies that algebraic free numbers are dense in $\CC$. 
 On the other hand, Ree proved that relation numbers are dense in the unit open disc $\{ z \in \CC : |z| < 1 \}$  \cite{MR142612}.
In particular, the set of relation numbers is dense in $[-4,4] \subset \RR$ \cite{MR94388}.
 
In the rational number case, Beardon \cite{MR1245077}, Tan and Tan \cite{MR1420342} gave some classes of convergent sequences of  rational relation numbers. 
Also Kim and Koberda \cite{kim2019non} found many "simple" sequences of rational relation numbers. More precisely for each $m\in \ZZ$ with $1 \leq m \leq 27$ and  $m\neq24$, the integer $m$  is a \textsf{good numerator}, meaning that the rational number $m/n$ is a relation number for all $n$ with $| m/n | < 4$ (see \cite{MR1370894} and \cite{kim2019non}). 
For more results about rational relation numbers, see \cite{MR2369190}. In particular, one of the most outstanding conjectures is the following.

\begin{conj} 
 If $\alpha \in \QQ$ and $|\alpha|<4$, then $\alpha$ is a relation number.
\end{conj}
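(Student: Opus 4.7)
My plan is to combine the generalized Farey graph $\FF_\alpha$ constructed earlier in the paper with a denominator-indexed family of obstruction vertices. By the main criterion, $G_\alpha$ is free of rank $2$ if and only if $\FF_\alpha$ is a tree, and the lemma showing that the presence of $1/2$ as a vertex forces a relation provides the template. The first step is to establish, for each integer $q \geq 2$, an explicit finite set $S_q \subset \QQ$ of ``bad'' rationals such that $v \in S_q$ being a vertex of $\FF_\alpha$ forces a cycle in $\FF_\alpha$. The target is then: for every $\alpha = p/q \in \QQ$ with $|\alpha| < 4$, at least one vertex in $S_1 \cup S_2 \cup \cdots \cup S_{R(q)}$ is forced to appear, where $R(q)$ depends only on $q$.

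Next I would exploit the available symmetries to cut down the problem. The replacement $\alpha \mapsto -\alpha$ is realised by conjugation in $\SLR$ (e.g.\ by $\mathrm{diag}(-1,1)$, which sends $A \mapsto A^{-1}$ and $B_\alpha \mapsto B_{-\alpha}$), so it suffices to consider $\alpha \in (0,4) \cap \QQ$. The known good numerators $\{1,2,\dots,27\} \setminus \{24\}$ of Kim--Koberda handle the rationals $p/q$ whose reduced numerator lies in that range, and I would try to match each Farey-neighbour relation $p/q \sim p'/q'$ with a move on $\FF_\alpha$ that propagates a known cycle from $p'/q'$ to $p/q$. This should allow an induction on $q$, bootstrapping from the small-denominator base cases.

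The principal obstacle, and the reason the conjecture remains open, is the sub-band where $|p/q|$ is close to $4$. There $G_\alpha$ lies near the boundary of the Riley slice and free subgroups are abundant, so any relation is realised by long words whose length is not controlled by $q$ alone; in particular $\FF_\alpha$ can look locally tree-like for a very long time before the cycle appears, defeating any uniform induction based solely on denominator size. My fallback is to combine the graph analysis with a dynamical argument on $S^1 = \partial \HH^2$: view $G_\alpha$ as a subgroup of $\Homeop(S^1)$, look for a rotation-number obstruction forcing some nontrivial word to have a finite orbit, and translate that finite orbit back into a cycle of $\FF_\alpha$. Making such a dynamical obstruction quantitative and uniform over all rationals $p/q$ with $|p/q| < 4$ is the step I expect to dominate the argument; the isolated bad case $m = 24$ in the Kim--Koberda list is a warning that even individual numerator classes may require delicate, non-generic arguments, and the method must be robust enough to accommodate them.
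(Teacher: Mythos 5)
This statement is stated in the paper as an open conjecture: the paper offers no proof of it, and indeed explicitly remarks that it is still unknown even whether there is a sequence of rational relation numbers converging to $3$ or $4$. What the paper actually proves (Theorem \ref{themainthm}) is a strictly weaker substitute — a sequence of \emph{algebraic integer} relation numbers converging to $4$, obtained via the orbit test and a winding-number argument. So there is no proof in the paper against which your proposal could be favourably compared.

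Your proposal also does not constitute a proof; it is a research plan in which every load-bearing step is left unestablished. Concretely: (i) you never construct the finite sets $S_q$ of obstruction vertices, nor prove that membership of any $v \in S_q$ in the vertex set of $\Gamma_\alpha$ forces a cycle — the paper's Proposition \ref{OrbitTest} does this only for $v = 1/2$ (and its translates $(2n+1)/2$), and extending it to other rationals is not automatic; (ii) the proposed induction on the denominator $q$ via Farey-neighbour propagation is asserted, not argued — there is no mechanism given by which a cycle in $\Gamma_{p'/q'}$ yields a cycle in $\Gamma_{p/q}$ for a Farey neighbour, and since $G_{p/q}$ and $G_{p'/q'}$ are different groups acting on different graphs, no such transfer is available from the paper's machinery; (iii) you yourself identify the regime $|p/q|$ near $4$ as defeating any uniform bound, and the fallback dynamical argument (a rotation-number obstruction uniform over all rationals below $4$) is precisely the part that is missing and that you acknowledge you cannot make quantitative. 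In short, the proposal correctly situates the problem relative to the paper's tools (the graph $\Gamma_\alpha$, Theorem \ref{thethm}, the orbit test) but proves nothing; the conjecture remains open both in the paper and in your write-up.
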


 However, it is still unknown whether there exists a sequence of rational relation numbers converging to $3$ or $4$.
Also, it is an open question to determine which value is a limit point of relation numbers in some other classes such as  algebraic integers.
Our main theorem is the following.

\begin{theorem}{\ref{themainthm}}
 There exists a sequence of polynomials $p_n(\alpha)$ satisfying the following:
\begin{itemize}
	\item each polynomial $p_n(\alpha)$ is a monic polynomial of degree $n$ with integer coefficients;
	\item all roots of $p_n(\alpha)$ are distinct real numbers and are relation numbers; and 
	\item let $\alpha_n$ be the maximal root of $p_n(\alpha)$. Then the sequence $\{\alpha_n\}_{n=1}^\infty$ is increasing and converges to $4$.
\end{itemize}
\end{theorem}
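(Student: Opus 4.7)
The plan is to exploit the criterion (established earlier in the paper) that if $1/2$ is a vertex of the generalized Farey graph associated to $G_\alpha$, then $G_\alpha$ is not a free group of rank $2$. For each $n$, I would construct an explicit word $w_n \in G_\alpha$ whose M\"obius action sends $\infty$ to $1/2$; clearing denominators in $w_n \cdot \infty = 1/2$ will produce the required polynomial $p_n(\alpha)$. The natural candidate---simple enough to give a polynomial of degree exactly $n$, and rich enough to have roots accumulating at the boundary $\alpha = 4$---is $w_n = (AB_\alpha^{-1})^n$.

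I would then compute as follows. Set $M := AB_\alpha^{-1} = \begin{pmatrix} 1-\alpha & 1 \\ -\alpha & 1 \end{pmatrix}$, so $\det M = 1$ and $\tau := \tr M = 2 - \alpha$. By Cayley--Hamilton,
$$M^n = U_{n-1}(\tau)\, M - U_{n-2}(\tau)\, I,$$
where $\{U_k\}$ are the monic Chebyshev-type polynomials defined by $U_0 = 1$, $U_1 = \tau$, $U_{k+1} = \tau U_k - U_{k-1}$. Reading off the first column of $M^n$,
$$M^n(\infty) = \frac{(1-\alpha)\,U_{n-1}(\tau) - U_{n-2}(\tau)}{-\alpha\, U_{n-1}(\tau)},$$
and the equation $M^n(\infty) = 1/2$ simplifies, using $\tau U_{n-1} = U_n + U_{n-2}$, to the clean form
$$U_n(\tau) - U_{n-2}(\tau) = 0.$$

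Define $p_n(\alpha) := (-1)^n \bigl[\, U_n(2-\alpha) - U_{n-2}(2-\alpha) \,\bigr]$. Because $U_n(\tau) - U_{n-2}(\tau)$ has leading term $\tau^n$ and integer coefficients, the polynomial $p_n$ is monic of degree exactly $n$ with integer coefficients, so its roots are automatically algebraic integers. Substituting $\tau = 2\cos\theta$ and using $U_k(2\cos\theta)\sin\theta = \sin((k+1)\theta)$ together with the sum-to-product identity, one obtains $U_n(\tau) - U_{n-2}(\tau) = 2\cos(n\theta)$. Hence the roots of $p_n$ are exactly
$$\alpha_{n,k} = 2 - 2\cos\!\left( \frac{(2k+1)\pi}{2n} \right), \qquad k = 0, 1, \ldots, n-1,$$
which are $n$ distinct real numbers lying in the open interval $(0, 4)$.

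Finally, by construction each root $\alpha_{n,k}$ satisfies $M^n(\infty) = 1/2$, so $1/2$ is a vertex of the graph and $\alpha_{n,k}$ is a relation number. The maximum root is
$$\alpha_n = 2 - 2\cos\!\left( \frac{(2n-1)\pi}{2n} \right) = 2 + 2\cos\!\left( \frac{\pi}{2n} \right),$$
which is strictly increasing in $n$ (as $\pi/(2n)$ decreases through $(0, \pi/2]$ and cosine is decreasing there) and converges to $4$ as $n \to \infty$. The only real obstacle is the initial choice of the word family and the target rational $1/2$---in particular, making sure that the resulting polynomial ends up monic of exactly degree $n$; once those guesses are made, the rest is a clean Chebyshev-polynomial computation.
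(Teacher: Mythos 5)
Your proof is correct, but it reaches the theorem by a genuinely different and more explicit route than the paper. The paper works with the word $(B_\alpha A^{-1})^n$ applied to $0$, defines $p_n(\alpha)=(-1)^{n+1}(2u_n(\alpha)-l_n(\alpha))$ from the matrix entries (so its $p_1=\alpha-3$, whereas yours is $\alpha-2$; note $AB_\alpha^{-1}=(B_\alpha A^{-1})^{-1}$, so the two witness families are close cousins but not the same), and then has to work for the last two bullet points: realness and distinctness of the roots are obtained from a winding-number/covering-map argument showing $\alpha\mapsto(B_\alpha A^{-1})^n(0)$ is a ``clockwise'' degree-$n$ cover of $S^1$, monotonicity of $\alpha_n$ from an intermediate-value argument on that cover, and $\alpha_n\to 4$ from density of parameters in $(3,4)$ for which $B_\alpha A^{-1}$ is elliptic with irrational rotation number. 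Your Chebyshev substitution $\tau=2\cos\theta$, giving $U_n(\tau)-U_{n-2}(\tau)=2\cos(n\theta)$ and hence the closed-form roots $\alpha_{n,k}=2-2\cos\bigl((2k+1)\pi/(2n)\bigr)$, collapses all three of those arguments into elementary trigonometry; what it buys is not just brevity but an exact description of every root and of $\alpha_n=2+2\cos(\pi/(2n))$, which the paper never obtains. Both proofs ultimately rest on the same input from the paper, the orbit test (Proposition \ref{OrbitTest}).

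One small point you should make explicit: you derived $U_n(\tau)-U_{n-2}(\tau)=0$ \emph{from} the equation $M^n(\infty)=1/2$ by cross-multiplying, so to conclude that each root of $p_n$ actually satisfies $M^n(\infty)=1/2$ (rather than $M^n(\infty)=\infty$) you must check that the denominator $-\alpha\,U_{n-1}(2-\alpha)$ is nonzero at every root. This is immediate from your parametrization: at $\theta=(2k+1)\pi/(2n)$ one has $U_{n-1}(2\cos\theta)=\sin(n\theta)/\sin\theta=\pm 1/\sin\theta\neq 0$, and $\alpha_{n,k}\in(0,4)$ so $\alpha\neq 0$. With that sentence added, the argument is complete.
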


 This paper is organized as follows.
In Section $\ref{Preliminaries}$ we introduce basic notions.
In Section $\ref{CircleAction}$ we discuss a circle action of $G_{\alpha}$ in terms of stabilizer subgroups.  
In Section $\ref{TheGeneralizedFareyGraph}$ by using the action of $G_{\alpha}$, we define the generalized Farey graph $\Gamma_{\alpha}$ and prove the following theorem.

\begin{theorem}{\ref{thethm}}
 Let $\alpha \in \RR$. Then $\alpha$ is a relation number if and only if the graph $\Gamma_{\alpha}$ is not tree.
\end{theorem}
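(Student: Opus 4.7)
The plan is to establish a correspondence between reduced nontrivial words in $A^{\pm1}$ and $B_\alpha^{\pm1}$ representing the identity in $G_\alpha$ on one hand, and cycles in the generalized Farey graph $\Gamma_\alpha$ on the other. Since $\alpha$ is a relation number exactly when such a word exists, and (assuming connectedness of $\Gamma_\alpha$) $\Gamma_\alpha$ fails to be a tree exactly when a cycle exists, this correspondence will yield the biconditional.

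For the forward direction, suppose $\alpha$ is a relation number and choose a reduced word $w = g_1 g_2 \cdots g_k$ with $g_i \in \{A^{\pm 1}, B_\alpha^{\pm 1}\}$ representing the identity in $G_\alpha$. Fixing a basepoint vertex $v_0$ of $\Gamma_\alpha$, set $v_i := g_1 g_2 \cdots g_i \cdot v_0$. Then $v_0, v_1, \ldots, v_k = v_0$ is a closed walk in $\Gamma_\alpha$, where the step from $v_{i-1}$ to $v_i$ traverses an edge determined by $g_i$. The key assertion is that reducedness of $w$ forces this walk to be non-backtracking: immediate reversal of an edge would require $g_{i+1} = g_i^{-1}$, contradicting reducedness. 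A non-backtracking closed walk in any graph contains an embedded cycle, so $\Gamma_\alpha$ is not a tree.

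For the reverse direction, suppose $\Gamma_\alpha$ contains a cycle. Reading off the generators labelling the successive edges yields a word $w$ in $A^{\pm 1}, B_\alpha^{\pm 1}$; traversing the cycle back to the starting vertex means that $w$ fixes that vertex. By the stabilizer description of vertices in $\Gamma_\alpha$ coming from the circle action of Section~\ref{CircleAction} -- each vertex stabilizer being either $\langle A \rangle$, $\langle B_\alpha \rangle$, or a conjugate thereof -- the only way $w$ can fix the vertex while being read off a non-backtracking cycle is for $w$ to equal the identity in $G_\alpha$. Since the cycle is non-backtracking, $w$ is reduced and nontrivial, so $\alpha$ is a relation number.

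The main obstacle will be the non-backtracking lemma in both directions, which relies on a careful analysis of how the edges of $\Gamma_\alpha$ are labelled by generators, together with the precise vertex-stabilizer picture coming from the circle action. A further technicality is verifying connectedness of $\Gamma_\alpha$ so that ``not a tree'' coincides with ``contains a cycle''; this should follow from $A$ and $B_\alpha$ generating $G_\alpha$, together with the construction of $\Gamma_\alpha$ placing every vertex in reach of successive generator applications from a fixed basepoint.
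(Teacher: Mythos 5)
Your overall strategy (relations $\leftrightarrow$ cycles, in both directions) is the same as the paper's, but as written there are two genuine gaps. First, in the forward direction your per-letter vertex walk $v_i = g_1\cdots g_i\cdot v_0$ is not actually a walk in $\Gamma_\alpha$: the generators fix vertices ($A\cdot\infty=\infty$, $B_\alpha\cdot 0=0$), so a step can fail to move at all, and even when it moves, the pair $\{v_{i-1},v_i\} = g_1\cdots g_{i-1}\cdot\{v_0, g_i v_0\}$ need not lie in the edge set, which is the single orbit $G_\alpha\cdot\{0,\infty\}$. The paper avoids this by grouping the word into syllables $W=C_1^{p_1}\cdots C_k^{p_k}$ with $\{C_i,C_{i+1}\}=\{A,B_\alpha\}$ and tracking the \emph{edges} $C_1^{p_1}\cdots C_i^{p_i}\cdot\ell$ rather than vertices (Lemma \ref{wordtopath}); consecutive edges then genuinely share one endpoint and differ. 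Relatedly, to conclude that the resulting closed edge-path contains an embedded cycle you must rule out degenerate relations of syllable length $1$ or $2$; this is the content of Lemma \ref{edgestabilizer}, proved by an explicit matrix computation showing no nontrivial word of the form $A^{p_1}B_\alpha^{p_2}$ (or $B_\alpha^{p_1}A^{p_2}$) stabilizes $\ell$. Your proposal does not address either point.

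Second, in the reverse direction the stabilizer description you invoke (each vertex stabilizer is $\langle A\rangle$, $\langle B_\alpha\rangle$, or a conjugate) is only established under the hypothesis that $G_\alpha$ is free (Proposition \ref{prop4} and Corollary \ref{pointstabilizer}); without freeness the stabilizers could be larger, so the edge-labelling around each vertex is not available. The paper therefore runs this direction contrapositively: assuming $G_\alpha$ free, a cycle through $\ell$ of length $k\geq 3$ yields an element $D_{k-1}$ whose word length is $k-1$ by Corollary \ref{pathlengthtowordlength}, yet whose edge $D_{k-1}\cdot\ell$ is adjacent to $\ell$, forcing length $1$ by Lemma \ref{vertexadjacent} --- a contradiction. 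Your assertion that ``the only way $w$ can fix the vertex while being read off a non-backtracking cycle is for $w$ to equal the identity'' is precisely this word-length bookkeeping; note also that $w$ fixing a vertex only places $w$ in a conjugate of $\langle A\rangle$ or $\langle B_\alpha\rangle$, not at the identity, so an additional argument is needed even granting the stabilizer picture. You flag this as the main obstacle, correctly, but it is where the substance of the proof lies.
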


This gives a necessary and sufficient condition for relation numbers.
In Section $\ref{orbittestforrelaion}$ by using Theorem \ref{thethm}, we prove the so-called orbit test. This gives us a criterion for relation numbers. In particular, we obtain some rational relation numbers using this test and present them.
Finally, we prove Theorem \ref{themainthm} in Section $\ref{proofofmainthm}$.

\vspace{0.6cm}

\noindent \textbf{Acknowledgment} We are grateful to Sang-hyun Kim and Hyungryul Baik for helpful comments. 
The authors thank Inhyeok Choi and Philippe Tranchida  for careful reading, useful comments, and corrections.
We would also like to thank  Seunghun Lee, Hongtaek Jung, and Donggyun Seo for helpful discussions.
The first author was partially supported by Samsung Science $\And$ Technology Foundation grant No. SSTF-BA1702-01, and the second author was partially supported by the Mid-Career Researcher Program (2018R1A2B6004003) through the National Research Foundation funded by the government of Korea. The authors thank the referee for several helpful comments.


\section{Preliminaries} \label{Preliminaries}
\subsection{A historical remark on relation numbers}
 In this section, we review the convention of previous literatures and fix ours.
For $\alpha_1 , \alpha_2 \in \CC$, consider two matrices
$$A_{\alpha_1}=\begin{bmatrix}
1 & \alpha_1 \\ 0 & 1
\end{bmatrix} \textnormal{ and } \ B_{\alpha_2}=\begin{bmatrix}
1 & 0 \\ \alpha_2 & 1
\end{bmatrix},$$ and let 
$$G_{\alpha_1,\alpha_2}=\left< A_{\alpha_1} , B_{\alpha_2} \right> \subset  \SLC.$$
 
 Many previous results have been obtained in the setting $G_{2,\lambda}$ or $G_{\mu,\mu}$.  Moreover, it is known that $G_{a,b}$ is isomorphic to $G_{1,ab} = G_{ab}$ for any nonzero complex numbers $a,b$ \cite{MR94388}.

In this paper, we will write $G_{\alpha}=G_{1,\alpha}$. We say that $\alpha \in \CC$ is $\textsf{a free number}$ if $G_\alpha$ is a free group of rank $2$. Otherwise, we call $\alpha$ a $\textsf{relation number}$. 
As mentioned before, this was first defined by Kim and Koberda \cite{kim2019non}. We note that, in that paper, they use $G_{q}=G_{q,1}$ but we use $G_{\alpha}=G_{1,\alpha}$ for convenience.

\subsection{The circle action of $G_{\alpha}$}\label{A circle action}
 Now we introduce basic definitions and notations. Let $\alpha$ be a real number.  Recall that $\SLC$ acts on the Riemann sphere $\CC\cup \{\infty \}.$ In particular, $\SLR$ acts on the upper half plane $\HH^2.$ 
Now we consider the Cayley transformation $$\phi(z)=i \ \frac{z-i}{z+i}$$ on the Riemann sphere which is a map from the upper half plane $\HH^2$ to the Poincare disk $\DD$. The map $\phi$ allows us to identify $\RR \cup \{ \infty \}$ with $S^1=\partial \DD.$  
In this sense, the group $G_{\alpha}$ acts on the circle $S^1=\RR\cup \{\infty\}$ since $\alpha \in \RR$ and $G_{\alpha}$ is a subgroup of $\SLR.$
 
 Let $M \in \SLR$ and consider the circle action.  We denote the \textsf{fixed point set} of $M$ by $\Fix(M)$, namely $$\Fix(M)=\{ p \in S^1 : M \cdot p = p \}.$$
 When $A$ is a subset in a set $X$ and a group $G$ acts on $X$, we denote the $\textsf{stabilizer subgroup}$ of $A$ by $\Stab_G(A)$,  that is  $$\Stab_G(A)=\{ g \in G : g \cdot A =A \}.$$
If $A=\{ p \}$, then $\Stab_G(p)$ denotes the set $$\{ g \in G : g \cdot p = p \}.$$

\subsection{Some combinatorial notions}
 We summarize basic combinatorial concepts and notations that will appear in Section \ref{TheGeneralizedFareyGraph}.
In this paper, all graphs are simple and undirected so we think of each edge as a two points subset of the vertex set. Let $\Gamma$ be a graph.
For $n \in \NN$, let $P_n$ be the graph defined by vertex set $V=\{ 0 , 1 , \cdots , n \}$ and edge set $E=\left \{ \{ i,i+1 \} : 0 \leq i \leq n-1 \right \}$. A $\textsf{path}$ $P$ in $\Gamma$ is an image of a graph morphism $f : P_n \to \Gamma$. Thus, for a given path $P$, we can express $P$ as a finite sequence of edges $y_1 , \cdots , y_n$. 
Here, $y_i$ is an image of the edge $\{ i-1,i \}$ so we call it the $\textsf{i-th edge}$ of the path $P$. Moreover, the image of $\{ 0 \}$ and $\{ n \}$ are called the $\textsf{starting point}$ of $P$ and the $\textsf{terminal point}$ of $P$, respectively.
We say that a path $P$ has no $\textsf{backtrackings}$ if $y_i \neq y_{i+1}$ for all $i$ in the sequence expression $y_1 , \cdots , y_n$.
If a path $P=y_1 , \cdots , y_n$ has no backtrackings, we define the $\textsf{length}$ of $P$ as $n$.

When a connected graph does not have a cycle graph as a subgraph, we say that the graph is a $\textsf{tree graph}$. The following is a well-known criterion for a tree graph.

\begin{lem} \label{DefTree}
 Let $T$ be a graph. Then $T$ is tree if and only if for any two vertices $v,w$, there exists a unique path without backtracking from $v$ to $w$.
\end{lem}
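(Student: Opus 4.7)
The plan is to establish both directions of the biconditional using the definition of a tree as a connected graph containing no cycle subgraph.

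For the forward direction, assume $T$ is tree. Existence of a backtracking-free path from $v$ to $w$ follows from connectedness: start with any path, and while some $y_i = y_{i+1}$ appears, delete these two consecutive edges to shorten the path while preserving endpoints; the process must terminate, and if $v = w$ it can terminate at the empty path. For uniqueness, suppose two distinct backtracking-free paths $P = y_1, \ldots, y_n$ and $P' = y_1', \ldots, y_m'$ both go from $v$ to $w$. Let $k \geq 0$ be maximal with $y_i = y_i'$ for all $i \leq k$; after these $k$ common edges both paths sit at a common vertex $u$, while their next edges differ. Let $u'$ be the first vertex at which $P$ and $P'$ meet again after $u$ (this exists since both terminate at $w$). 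Between $u$ and $u'$ the two sub-paths are internally vertex-disjoint by the minimality of $u'$, and they are edge-disjoint because a shared edge would force a shared interior vertex. In a simple graph, at least one of these two sub-paths has length $\geq 2$, so their union is a cycle subgraph of $T$ of length $\geq 3$, contradicting the tree hypothesis.

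For the backward direction, assume that between any two vertices there is a unique backtracking-free path. Connectedness of $T$ is immediate. Suppose for contradiction that $T$ contained a cycle subgraph on distinct vertices $v_0, v_1, \ldots, v_{n-1}$ with consecutive edges and $n \geq 3$. Then between $v_0$ and $v_1$ we exhibit two distinct backtracking-free paths: the single edge $\{v_0, v_1\}$, and the path $v_0, v_{n-1}, v_{n-2}, \ldots, v_1$ of length $n-1 \geq 2$. This contradicts the uniqueness hypothesis, so $T$ contains no cycle subgraph, and hence is tree.

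The main obstacle is the uniqueness half of the forward direction: one cannot simply concatenate $P$ with the reversal of $P'$ to obtain a cycle, since the two paths share the final vertex $w$ and the reversed walk typically exhibits a backtracking at the junction. The fix is to work instead with the last vertex of initial agreement $u$ and the first subsequent reunion vertex $u'$, where the required vertex- and edge-disjointness are forced by the maximality of $k$ and minimality of $u'$, so extracting a genuine embedded cycle subgraph becomes automatic.
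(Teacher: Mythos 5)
The paper does not actually prove this lemma---it cites Theorem 1.5.1 of Diestel---so your argument stands entirely on its own. Your backward direction and the existence half of the forward direction are correct. The uniqueness half, however, has a genuine gap. In this paper a ``path'' is the image of a morphism $P_n\to\Gamma$, i.e.\ a walk that may revisit vertices and edges, and ``without backtracking'' only forbids $y_i=y_{i+1}$. Your argument produces two sub-walks from $u$ to $u'$ and shows they are internally disjoint \emph{from each other}; but for their union to be a cycle subgraph you also need each sub-walk to be a simple path, i.e.\ internally disjoint \emph{from itself}. Nothing in the maximality of $k$ or the minimality of $u'$ prevents, say, the sub-walk of $P$ from running $u,\dots,x,y,z,x,\dots,u'$ (backtracking-free, yet repeating $x$), in which case the union of the two sub-walks is not an embedded cycle. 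Since the possibility that reduced walks revisit vertices is exactly what separates this statement from the textbook statement about simple paths---and is the situation the paper actually exploits, e.g.\ in Proposition \ref{relationtonottree}---this cannot be waved through. A second, smaller omission: when one of the two paths is an initial segment of the other (that is, $k=\min(n,m)$), the shorter path has no ``next edge'' and nothing after $u=w$, so your construction of $u'$ never gets started.

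Both gaps are closed by one standard auxiliary lemma: a nonempty backtracking-free closed walk forces a cycle subgraph. Take such a walk of minimal positive length $n$ over all basepoints; simplicity of the graph rules out $n=1$ and $n=2$, and if two of $v_0,\dots,v_{n-1}$ coincided one could cut out a strictly shorter nonempty backtracking-free closed walk, contradicting minimality, so the walk is an embedded cycle. Given this, uniqueness follows by freely cancelling the concatenation of $P$ with the reversal of $P'$: cancellation can only occur at the junction, it strips off a common suffix, and the result is empty only if $P=P'$; hence $P\neq P'$ yields a nonempty reduced closed walk and therefore a cycle. This route also subsumes the prefix case automatically.
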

\begin{proof}
 See Theorem 1.5.1 in \cite{MR2159259}.
\end{proof}

\subsection{Clockwise and anticlockwise maps} \label{Clockwise and anticlockwise maps}

 In Section $\ref{proofofmainthm}$, we will deal with some continuous maps from an open subset $I$ of $\RR$ to the unit circle $S^1=\partial \DD$ on $\CC$. More precisely, they are meromorphic functions restricted to $\RR,$
 and their images are contained in $\RR \cup \{\infty\}$. So we will think of  a map from an open subset $I$ of $\RR$ to $\RR \cup \{ \infty \}$ as  a map from $I$ to $S^1$ under the identification by the Cayley transformation $\phi,$ and vise versa.

In the proof of Theorem \ref{themainthm}, the key objects are (anti)clockwise maps which wind intervals of $\RR$ around $S^1.$ 
Let $I$ be an open subset of $\RR$. 
First, a map $f$ from $I$ to $S^1$ is $\textsf{strictly increasing}$ (or $\textsf{strictly decreasing}$) at $a \in J:= I -f^{-1}(\{\infty\})$ if there is an $\epsilon>0$ such that $(a-\epsilon, a+\epsilon) \subseteq J $ and the map $f|_{(a-\epsilon, a+\epsilon)}$ is strictly increasing (or strictly decreasing respectively). 
Note that if $f$ is a continuous map from  $I$ to $S^1$, then $-1/f$ is also a continuous map from $I$ to $S^1$. 
A continuous map $f$ from $I$ to $S^1$ is $\textsf{anticlockwise}$ at $x_0 \in I$ if one of $f$ or $-1/f$ is strictly increasing at $x_0.$ We call $f$ an $\textsf{anticlockwise map}$ when $f$ is anticlockwise  at all points in $I$. 
Similarly, we can define $\textsf{clockwiseness}$ at a point and a $\textsf{clockwise map}$. 

We give simple properties of anticlockwise and clockwise maps. They will be used in the proof of Lemma \ref{rotationlemma}. The first two lemmas follow easily from the definition, so we only give a rigorous proof of the third lemma. 

\begin{lem} \label{rotationplusconstant}
 Let $I$ be an open interval in $\RR$. For a given anticlockwise (or clockwise) map $f:I \to S^1$and a constant $c_0 \in \RR$,  $g(x):=f(x)+c_0$ is again an anticlockwise (or a clockwise map, respectively). 
\end{lem}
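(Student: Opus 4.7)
The plan is to verify the definition of \emph{anticlockwise at $x_0$} for the map $g = f + c_0$ at each point $x_0 \in I$, by case analysis on whether $f(x_0) = \infty$. Geometrically the statement is transparent: the Möbius map $z \mapsto z + c_0$ is an orientation-preserving self-homeomorphism of $S^1 = \RR \cup \{\infty\}$ (it fixes $\infty$ and acts as translation on $\RR$), so it should preserve the notion of ``winding anticlockwise''; the content of the argument is to reconcile this with the definition phrased in terms of strict monotonicity of $f$ and $-1/f$.

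\textbf{Case 1: $f(x_0) \ne \infty$.} Then $g(x_0)$ is finite, and by continuity $g$ takes finite values on a neighborhood of $x_0$, so it suffices to produce a neighborhood on which $g$ is strictly increasing. By hypothesis either $f$ or $-1/f$ is strictly increasing on some interval $(x_0 - \epsilon, x_0 + \epsilon)$. In the former case the conclusion is immediate, since adding a constant preserves strict monotonicity. In the latter, note that $-1/f$ being strictly increasing at $x_0$ forces $f(x_0) \ne 0$ (otherwise $-1/f(x_0) = \infty$, excluded by the definition of ``strictly increasing at a point''). Continuity then makes $f$ nonzero on a possibly smaller neighborhood, and since $y \mapsto -1/y$ is strictly increasing on each of $(-\infty, 0)$ and $(0, \infty)$, strict monotonicity of $-1/f$ and of $f$ are equivalent there. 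Thus $f$, and hence $g = f + c_0$, is strictly increasing near $x_0$.

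\textbf{Case 2: $f(x_0) = \infty$.} Then $g(x_0) = \infty$ as well, so one must instead show $-1/g$ is strictly increasing near $x_0$. By hypothesis $-1/f$ is strictly increasing on some $(x_0 - \epsilon, x_0 + \epsilon)$, and in particular $-1/f(x_0) = 0$. The key identity
\[
-\frac{1}{g(x)} = -\frac{1}{f(x) + c_0} = \frac{-1/f(x)}{1 - c_0 \cdot (-1/f(x))}
\]
(valid wherever both sides make sense) expresses $-1/g$ as $\psi \circ (-1/f)$, where $\psi(y) = y/(1 - c_0 y)$ is a Möbius map with derivative $1/(1 - c_0 y)^2 > 0$ on its natural domain. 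Since $\psi(0) = 0$, by continuity we may shrink $\epsilon$ so that $-1/f$ maps $(x_0 - \epsilon, x_0 + \epsilon)$ into an interval on which $\psi$ is strictly increasing. Then $-1/g$ is a composition of two strictly increasing maps on $(x_0 - \epsilon, x_0 + \epsilon)$, hence is itself strictly increasing there.

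The clockwise case reduces to the anticlockwise one: a direct check from the definitions shows that $f$ is clockwise at $x_0$ if and only if $-f$ is anticlockwise at $x_0$, and writing $-g = (-f) + (-c_0)$ lets us apply Case 1 and Case 2 to $-f$ with constant $-c_0$ to conclude that $-g$ is anticlockwise, equivalently $g$ is clockwise. I expect the only nonroutine step to be the Möbius identity used in Case 2; everything else is bookkeeping from the definition of ``strictly increasing at a point''.
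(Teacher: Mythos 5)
Your proof is correct. Note, however, that the paper offers no proof of this lemma to compare against: it explicitly states that the first two lemmas of that subsection ``follow easily from the definition'' and only proves Lemma \ref{rotationplusx} in detail. What you have written is a legitimate rigorous filling-in of the omitted argument. The two points that genuinely need checking both hold: in Case 1 the reduction from ``$-1/f$ strictly increasing'' to ``$f$ strictly increasing'' is valid because near $x_0$ the function $f$ is finite, nonvanishing, and hence of constant sign, so $y\mapsto -1/y$ transports strict monotonicity both ways; and in Case 2 the identity $-1/(f+c_0)=\psi(-1/f)$ with $\psi(y)=y/(1-c_0y)$ is a correct Möbius computation, with $\psi$ fixing $0$ and strictly increasing near $0$, which is exactly what is needed since $-1/f(x_0)=0$. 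Your reduction of the clockwise case to the anticlockwise one via $f\mapsto -f$ is also consistent with the paper's (only implicitly stated) definition of clockwiseness. Your Case 2 is structurally cleaner than the sign-chasing the paper performs in its proof of Lemma \ref{rotationplusx}; the same Möbius-composition trick would in fact streamline that proof as well.
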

 Here, we allow an open interval to be $(-\infty,a) , (a,\infty)$ or $(-\infty,\infty)=\RR$.

\begin{lem} \label{rotationinverse}
 Let $I$ be an open interval in $\RR$. If $f(x):I \to S^1$ is anticlockwise (or clockwise), then a map $g(x) := 1/f(x)$ is clockwise (or anticlockwise, respectively).
 \end{lem}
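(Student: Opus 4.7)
The proof is a direct unfolding of the definition, keyed on the two algebraic identities $g = 1/f$ and $-1/g = -f$. I will verify only the first implication — anticlockwise $f$ gives clockwise $g$ — since the reverse follows by interchanging ``increasing'' and ``decreasing'' throughout.

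Fix $x_0 \in I$. Since $f$ is anticlockwise at $x_0$, at least one of $f$ or $-1/f$ is strictly increasing at $x_0$ in the sense of the paper. I plan to show that in the first case $-1/g$ is strictly decreasing at $x_0$, and in the second case $g$ itself is strictly decreasing at $x_0$; either outcome matches the definition of clockwise at $x_0$, and since $x_0 \in I$ is arbitrary, it follows that $g$ is a clockwise map on all of $I$.

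For the first case, if $f$ is strictly increasing at $x_0$ then by definition $f(x_0) \neq \infty$ and $f$ is finite and strictly increasing on some neighborhood $(x_0 - \epsilon, x_0 + \epsilon)$. Pointwise negation reverses strict monotonicity on an interval of finite values, so $-f$ is strictly decreasing there; but $-f = -1/g$ on this neighborhood, so $-1/g$ is strictly decreasing at $x_0$. For the second case, if $-1/f$ is strictly increasing at $x_0$ then $f(x_0) \neq 0$, so $f$ is nonzero on a neighborhood of $x_0$, and there $g = 1/f = -(-1/f)$ is strictly decreasing.

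The only step requiring care — and what I see as the main technical point — is matching the domain convention of ``strictly monotone at a point'', which by definition requires the function to be finite at the point and on a whole neighborhood. This is precisely what the dichotomy between $f$ and $-1/f$ in the definition of (anti)clockwiseness is designed to handle: in each of the two cases above, the chosen representative is automatically the one that stays finite near $x_0$, so no separate pole analysis is needed. Beyond that bookkeeping, the argument is essentially a one-line computation once the definitions are unpacked, consistent with the authors' remark that the lemma ``follows easily from the definition''.
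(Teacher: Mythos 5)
Your proof is correct, and it takes exactly the route the authors intend: the paper omits the proof of this lemma entirely, remarking only that it ``follows easily from the definition,'' and your case split on whether $f$ or $-1/f$ is the finite increasing representative, combined with the identities $-1/g=-f$ and $g=-(-1/f)$, is the straightforward definition-unfolding they have in mind. The attention you pay to the domain convention (finiteness on a neighborhood) is the right bookkeeping and is handled correctly in both cases.
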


\begin{lem} \label{rotationplusx}
 Let $I$ be an open interval in $\RR$. Suppose that $f:I \to S^1$ is anticlockwise. Then $g:I \to S^1$ defined by $g(x):=f(x)+x$ is also anticlockwise.
\end{lem}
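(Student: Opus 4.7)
The plan is to verify the anticlockwise property of $g(x) = f(x) + x$ pointwise, splitting into two cases according to whether $f(x_0)$ equals $\infty$.

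\emph{Case 1: $f(x_0) \neq \infty$.} I first argue that the hypothesis ``$f$ is anticlockwise at $x_0$'' in fact forces $f$ to be strictly increasing on some open neighborhood of $x_0$. If the anticlockwise condition comes directly from $f$ being strictly increasing at $x_0$, this is immediate. Otherwise it comes from $-1/f$ being strictly increasing at $x_0$, which already requires $f$ to be nonzero on a neighborhood of $x_0$; by continuity $f$ has constant sign on a small enough neighborhood, and the strict monotonicity of $y \mapsto -1/y$ on each of $(0,\infty)$ and $(-\infty,0)$ converts ``$-1/f$ strictly increasing'' into ``$f$ strictly increasing''. Since $x \mapsto x$ is strictly increasing, $g = f + x$ is then a sum of two strictly increasing real-valued functions, hence strictly increasing on this neighborhood, and thus anticlockwise at $x_0$.

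\emph{Case 2: $f(x_0) = \infty$.} Here only the ``$-1/f$'' branch of the anticlockwise definition can apply. Set $h(x) := -1/f(x)$, so $h$ is continuous and strictly increasing on some neighborhood $U$ of $x_0$ with $h(x_0) = 0$. A direct substitution yields
\[
-\frac{1}{g(x)} \;=\; \frac{h(x)}{1 - x\,h(x)}.
\]
Using continuity of $h$ at $x_0$, I can shrink $U$ so that $|x\,h(x)| < 1/2$ throughout $U$; in particular the denominator stays positive. For $x_1 < x_2$ in $U$, a routine calculation shows that the numerator of $(-1/g)(x_2) - (-1/g)(x_1)$ simplifies to
\[
\bigl(h(x_2) - h(x_1)\bigr) \,+\, h(x_1)\,h(x_2)\,(x_2 - x_1),
\]
whose first summand is strictly positive by monotonicity of $h$.

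The main obstacle is the subcase $x_1 < x_0 < x_2$, in which $h(x_1) < 0 < h(x_2)$ makes the cross term negative so that the overall sign is not immediate. To handle it I would shrink $U$ further so that $|h|$ is uniformly small on $U$. Setting $a := -h(x_1)$ and $b := h(x_2)$ (both positive), the numerator becomes $a + b - ab\,(x_2 - x_1)$, and the AM-GM inequality $a+b \ge 2\sqrt{ab}$ combined with $\sqrt{ab}\,(x_2-x_1) < 2$ (ensured by choosing $U$ small enough, since $a,b$ shrink with $U$ while $x_2 - x_1$ stays bounded) makes the expression strictly positive. Thus $-1/g$ is strictly increasing on $U$, so $g$ is anticlockwise at $x_0$, completing both cases.
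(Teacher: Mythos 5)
Your proof is correct, and in the main case it takes a genuinely different route from the paper's. For $f(x_0)=\infty$ the paper never writes down a formula for $-1/g$ in terms of $-1/f$; instead it fixes a bound $M>\max(|a+1|,|a-1|)$, shrinks the neighborhood until $|f|>M$, and then argues separately on $(a,a+\epsilon)$ and $(a-\epsilon,a)$ that $f(x)+x$ keeps a constant sign and that $h=-1/g$ is positive and increasing on the right of $a$, negative and increasing on the left, with $h(a)=0$ gluing the two halves. You instead work uniformly on one neighborhood via the identity $-1/g = h/(1-xh)$ with $h=-1/f$, control the denominator by $|xh|<1/2$, and reduce strict monotonicity to the positivity of $(h(x_2)-h(x_1)) + h(x_1)h(x_2)(x_2-x_1)$; the only delicate configuration $x_1<x_0<x_2$ is dispatched by AM--GM after making $|h|$ and the interval length small. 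Your computation is more uniform and avoids the two-sided sign chase, at the cost of the algebraic identity and the AM--GM estimate; the paper's version is more elementary but longer. A further point in your favor: in the finite case the paper simply asserts that $f$ is strictly increasing at $a$, silently ignoring the possibility that anticlockwiseness at $a$ is witnessed by the $-1/f$ branch even when $f(a)\neq\infty$; your observation that this branch forces $f$ to have constant sign near $x_0$ and hence to be strictly increasing there closes that small gap explicitly.
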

\begin{proof}
 Let $a$ be a point in $I.$ First we consider the case where $f(a) \neq \infty$. Then $g(a) \neq \infty$. As the map $f$ is strictly increasing at $a$, $g$ is strictly increasing at $a$ so $g$ is anticlockwise at $a$. 
Next we consider the case where $f(a) = \infty.$ Since $f$ is anticlockwise, 
we can take a real number $\epsilon_0$ with $0<\epsilon_0<1$ so that $-1/f (x)\neq \infty $ for all $x\in (a-\epsilon_0,a+\epsilon_0)$ and the map $-1/f | _{(a-\epsilon_0,a+\epsilon_0)}$ is strictly increasing. 
We choose a real number $M$ so that $M>|a+1|$ and $M>|a-1|.$ Then since $f$ is continuous at $a$ and $f(a)=\infty$, there is a real number $\epsilon_1$ with $0<\epsilon_1<\epsilon_0$ such that $|f(x)| > M$ for all $x\in (a-\epsilon_1 , a+\epsilon_1).$

Now we consider a map $h:I \to S^1$ defined by the equation $h(x)=-1/g(x).$ 
Since $g(a)=\infty$, it is sufficient to show that $h(x)$ is strictly increasing at $a$. 
Since $h(a)=-1/g(a)=0$ and $h$ is continuous at $a$, there is a real number $\epsilon$ with $0<\epsilon<\epsilon_1$ such that $|h(x)|<1$ for all $x\in (a-\epsilon, a+\epsilon).$ 
We claim that the map $h$ is strictly increasing on the open interval $(a-\epsilon, a+\epsilon)$. 

First we focus on the interval $(a, a+\epsilon).$ Fix two points $x_1$ and $x_2$ with $a < x_1 < x_2 < a+\epsilon$.
 Since $f(a) = \infty$ and the map $-1/f(x)$ is strictly increasing on the interval $(a-\epsilon,a+\epsilon),$ we have that  $0=-1/f(a)<-1/f(x_i)$ for all $i\in \{1,2\}.$ 
Hence $f(x_1)<0$ and  $f(x_2) < 0$ since $-1/f(x_i)\neq \infty$ for all $i\in \{1,2\}$. Then as $0<-1/f(x_1) < - 1/f(x_2)$, we can get that $f(x_i)\neq \infty$ for all $i\in \{1,2\}$ and $$f(x_1)<f(x_2)<-M<0.$$
Also $f(x_1)+x_1<f(x_2)+x_2$ since $x_1<x_2$. Note that the map $f(x)+x$ is continuous on $ (a-\epsilon, a+\epsilon)$ and $f(x)+x \neq 0$ for all $x\in (a-\epsilon, a+\epsilon)$ as $|h(x)|<1$ for all $x\in (a-\epsilon, a+\epsilon).$  Thus either 
$$ 0 < f(x_1)+x_1<f(x_2)+x_2 \textnormal{ or } f(x_1)+x_1<f(x_2)+x_2 < 0.$$
However if $0<f(x_1)+x_1<f(x_2)+x_2,$ then it is in contradiction with
$$f(x_1)+x_1 < -M + (a+\epsilon) < - M + a+1< -M+|a+1| < 0.$$ Therefore $ f(x_1)+x_1<f(x_2)+x_2 < 0$ and $f(x_i)+x_i\neq \infty$ for all $i\in \{1,2\}.$ This implies that  $0 < h(x_1) < h(x_2)$ and so $h$ has positive values and is strictly increasing on $(a, a+\epsilon).$
		
Now we consider the map $h$ on the interval $(a-\epsilon, a).$ Fix two points $x_1$ and $x_2$ with $a-\epsilon < x_1 < x_2 < a$.
Since $f(a) = \infty$ and the map $-1/f(x)$ is strictly increasing on the interval $(a-\epsilon,a+\epsilon),$ we have that $ -1/f(x_i) < -1/f(a)=0$ for all $i\in \{1,2\}.$ 
Hence $f(x_1)>0$ and $f(x_2) > 0$ since $-1/f(x_i)\neq \infty$ for all $i\in \{1,2\}.$ Then as $-1/f(x_1) < -1/f(x_2)<0,$ we can get that $f(x_i)\neq \infty$ for all $i\in \{1,2\}$ and
$$0 < M < f(x_1) < f(x_2).$$
Also $f(x_1)+x_1<f(x_2)+x_2$ since $x_1<x_2$. Note that the map $f(x)+x$ is continuous on $ (a-\epsilon, a+\epsilon)$ and $f(x)+x \neq 0$ for all $x\in (a-\epsilon, a+\epsilon)$ as $|h(x)|<1$ for all $x\in (a-\epsilon, a+\epsilon).$  Thus either
$$0 < f(x_1) + x_1 < f(x_2) + x_2  \textnormal{ or } f(x_1)+x_1 < f(x_2) + x_2 < 0.$$
However if $f(x_1)+x_1<f(x_2)+x_2<0,$ then it is in contradiction with
$$f(x_2)+x_2 > M + (a - \epsilon) > M + (a-1) > M - |a-1| > 0.$$Therefore $0< f(x_1)+x_1<f(x_2)+x_2 $ and $f(x_i)+x_i\neq \infty$ for all $i\in \{1,2\}.$ 
This implies that  $ h(x_1) < h(x_2)<0$ and so $h$ has negative values and is strictly increasing on $(a-\epsilon,a).$ Therefore the map $h$ is strictly increasing on $(a-\epsilon,a+\epsilon)$. Thus the map $g$ is anticlockwise. 
\end{proof}

\subsection{Winding numbers} \label{winding}
Now we will define the winding number for continuous maps on $S^1$ modified slightly for our purpose and show some facts about the winding number. Let $f: S^1 \to S^1$ be a continuous map. 
For each point $x\in S^1=\partial \DD,$ we define a quotient map $\pi_x$ from $[0,1]$ to $S^1$ by the equation $\pi_x(s)=xe^{2\pi i s},$ and define a map $f_x: [0,1]\to S^1$ so that $f_x(s)=f\circ \pi_x(s)/f(x)$ for all $s\in [0,1].$ 
 Let $p$ be a map from $\RR$ to $S^1=\partial \DD$ defined by the equation $p(s)=e^{2\pi i s}.$ The map $p$ will be the universal covering map of $S^1.$ 
Now we can define  a map $\tilde{f_x}$ to be the lift of $f_x$ by the covering $p$ such that $\tilde{f_x}(0)=0.$ 
Then we define the $\textsf{winding number}$ of $f$ by $|\tilde{f_x}(1)|$ for some $x\in S^1.$ Note that for any two points $x$ and $y$ in $S^1,$ $\tilde{f_x}(1)$ and $\tilde{f_y}(1)$ are the same integer number. 
Hence the winding number of $f$ is well defined.  We will denote the winding number of $f$ by $w(f).$ 
 
 Let now the map $f:\RR \to  S^1$ be a clockwise or anticlockwise map. 
In this paper,  the map $f$ usually satisfies   $\displaystyle \lim_{x \to -\infty} f(x) = \lim_{x \to \infty} f(x).$ Hence we assume that  $\displaystyle \lim_{x \to -\infty} f(x)$ and $\displaystyle \lim_{x \to \infty} f(x)$ exist and are equal. 
Then we can define the \textsf{extension map} $\overline{f}$ of the map $f$ so that the map $\overline{f}$ is a continuous map on $S^1$ and makes the following diagram commute. 
 \begin{center}
\begin{tikzcd}
\RR \arrow[swap,d,"\phi"]  	\arrow[r,"f"] &S^1\\
S^1 \arrow[swap,ur,"\overline{f}"]
 \end{tikzcd}
\end{center}
where the map $\phi$ is the Cayley transformation. Then we can consider the winding number of $\overline{f}.$
 The following lemma is a convenient tool to calculate the winding number.

\begin{lem} \label{winding-calculuation}
 Let $f : \RR \to S^1$ be clockwise or anticlockwise. Suppose that  $\displaystyle \lim_{x \to -\infty} f(x)$ and $\displaystyle \lim_{x \to \infty} f(x)$ exist and are equal. 
Then the extension map $\overline{f}$ of the map $f$ is a covering map of $S^1.$ Moreover the number of sheets of $\overline{f}$ equals $w(\overline{f}).$ 
\end{lem}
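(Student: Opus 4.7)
My plan is to establish that $\overline{f}$ is a local homeomorphism on all of $S^1$, invoke compactness to conclude it is a covering map, and finally identify its number of sheets with $w(\overline{f})$. Off the single point $p_\infty := \phi(\infty)$, local homeomorphism is essentially built into the hypothesis: for any $p = \phi(x_0) \in S^1 \setminus \{p_\infty\}$, the (anti)clockwise property at $x_0$ gives an open interval $(x_0 - \epsilon, x_0 + \epsilon)$ on which either $f$ or $-1/f$ is strictly monotone. The restriction of $f$ to this interval is then a homeomorphism onto an open arc in $S^1$, and composing with the chart $\phi$ exhibits $\overline{f}$ as a local homeomorphism at $p$.

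The hard part will be verifying local injectivity at $p_\infty$, since the (anti)clockwise hypothesis is only stated on $\RR$. I plan to transfer the question via the Möbius substitution $h(y) := -1/y$, which is strictly monotone increasing on each connected component of $\RR \setminus \{0\}$ and swaps $0$ with $\infty$. The composition $g(y) := f(h(y))$ then inherits the (anti)clockwise property on $(-\epsilon, 0) \cup (0, \epsilon)$ from $f$, and the limit hypothesis $\lim_{x \to \pm\infty} f(x) = L$ allows me to extend $g$ continuously across $0$ by setting $g(0) := L$. Replacing $g$ by $-1/g$ if $L = \infty$, I may assume $g$ avoids $\infty$ near $0$; strict monotonicity of $g$ on each punctured half-neighborhood together with the common one-sided limit $L$ at $0$ then forces $g$ to be strictly monotone on an entire neighborhood of $0$. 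Pulling back through $h$ and the chart $\phi$ yields local injectivity, hence local homeomorphism, of $\overline{f}$ at $p_\infty$.

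Once $\overline{f}: S^1 \to S^1$ is known to be a continuous local homeomorphism from a compact source, the standard argument goes through: local homeomorphisms are open, compactness of the source makes the image closed, connectedness of the target then forces surjectivity, every fiber is finite and discrete, and shrinking gives pairwise disjoint neighborhoods at each preimage mapping homeomorphically onto a common evenly covered neighborhood. Letting $n$ denote the resulting number of sheets, the identification $\pi_1(S^1) \cong \ZZ$ shows that any $n$-sheeted cover of $S^1$ has topological degree $\pm n$; since $w(\overline{f})$ is by definition the absolute value of this degree, I conclude $w(\overline{f}) = n$.
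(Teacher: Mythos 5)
Your proposal is correct, but it follows a genuinely different route from the paper's proof. The paper works globally: it parametrizes $S^1$ by the based loop $h=\overline{f}_{\phi(\infty)}$, lifts it to $\tilde{h}\colon[0,1]\to\RR$, uses the (anti)clockwise hypothesis (transported through the strictly increasing chart $D=\phi^{-1}\circ\pi_{\phi(\infty)}\circ i$) to show $\tilde{h}$ is a strictly increasing homeomorphism onto $[0,n]$ with $n=\tilde{h}(1)$, and then descends $\tilde{h}$ to a homeomorphism $H$ of $S^1$ satisfying $R_{1/x}\circ\overline{f}=c_n\circ H$ with $c_n(z)=z^n$; this single factorization simultaneously exhibits $\overline{f}$ as an $n$-fold covering and identifies $n$ with $w(\overline{f})$, with no appeal to degree theory, and the delicate behavior at $\phi(\infty)$ is absorbed automatically by the continuity of $\tilde{h}$ at the endpoints of $[0,1]$. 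You instead prove $\overline{f}$ is a local homeomorphism pointwise, invoke the standard fact that a local homeomorphism from a compact space onto a connected Hausdorff space is a finite covering, and then match the sheet number to $w(\overline{f})$ via $\pi_1(S^1)\cong\ZZ$. This is more modular and reuses standard covering-space machinery, but it forces you to do real work exactly where the hypothesis is silent, namely local injectivity at $\phi(\infty)$; your transfer through $y\mapsto-1/y$ and the one-sided limit argument ($g<L$ on the left, $g>L$ on the right, hence strict monotonicity across $0$) does handle this correctly, provided you also record the small observation that where $g$ avoids $\infty$, pointwise (anti)clockwiseness already forces $g$ itself (not just $-1/g$) to be locally strictly monotone. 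One further point to make explicit in your last step: the paper defines $w(\overline{f})$ as $|\tilde{f}_x(1)|$, so identifying it with the absolute value of the topological degree uses the standard fact that the degree of a circle map equals the total displacement of any lift over one period and that the rotation $z\mapsto z/f(x)$ does not change it. In exchange for these extra citations, your argument generalizes more readily, while the paper's buys the stronger conclusion that $\overline{f}$ is topologically conjugate (up to a rotation) to the model cover $z\mapsto z^n$.
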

\begin{proof} 
Suppose that $f$ is anticlockwise. For brevity we write $x=\overline{f}(\phi(\infty))\in \partial \DD$ and $h=\overline{f}_{\phi(\infty)}$ Then we consider the lifting map $\tilde{h}.$ Note that the following diagram commutes. 
\begin{center}
\begin{tikzcd}
&&&&\RR \arrow[d,"p"] \\
{(0,1)}\arrow[hookrightarrow]{r}{i} \arrow{drr}{D}&{[0,1]}\arrow[r,"\pi_{\phi(\infty)}"] \arrow[bend left]{urrr}{\tilde{h}} \arrow[bend left=40]{rrr}{h}
&S^1\arrow[r,"\overline{f}"] &S^1 \arrow[r, "R_{1/x}"]& S^1 \\
&&\RR \arrow[u,"\phi"]\arrow[ru, "f"]&&
\end{tikzcd}
\end{center}
where the map $i$ is the inclusion map, the  map $R_{1/x}$ is defined by $R_{1/x}(z)=z/x,$ and  we write $D=\phi^{-1}\circ \pi_{\phi(\infty)} \circ i.$ 
Moreover the map $D$ is a strictly increasing continuous map so it is a homeomorphism. Therefore the map $R_{1/x}\circ f \circ D$ is anticlockwise as the map $f$ is anticlockwise. 
Hence the map $\tilde{h}\circ i$ is strictly increasing since the map $\tilde{h}\circ i$ is a lifting of $R_{1/x}\circ f \circ D=h\circ i$ by the covering map $p$ and the map  $p$ is anticlockwise. 
Therefore the continuous map $\tilde{h}$ is a strictly increasing map from $[0,1]$ to $[0,\tilde{h}(1)]$ since $\tilde{h}(1)$ is a positive integer.

Now we claim that the map $R_{1/x} \circ \overline{f}$ is a covering map.
We write $n=\tilde{h}(1)$ and we define the map $c_n$ on $S^1$ such that  $c_n(z)=z^n$ for all $z\in \partial \DD$, and the map  $\pi^n : [0,n] \to S^1$ such that $\pi^n(s)=\pi_{\phi(1)}(s/n)$ for all $s\in [0,n].$ Then the following diagram also commutes. 
\begin{center}
\begin{tikzcd}
{[0,1]}\arrow[r, "\tilde{h}"] \arrow[d, "\pi_{\phi(\infty)}"]&{[0,n]}\arrow[d, "p"] \arrow[r, "\pi^n"]&S^1\arrow[ld, "c_n"] \\ 
S^1\arrow[r, "R_{1/x}\circ \overline{f}"]   &S^1&
\end{tikzcd}
\end{center}
As the map $\tilde{h}$ is a homeomorphism from $[0,1]$ to $[0,n],$ there is a unique homeomorphism $H$ of $S^1$  which makes the following diagram commute.
\begin{center}
\begin{tikzcd}
{[0,1]}\arrow[r, "\tilde{h}"]\arrow[swap, d, "\pi_{\phi(\infty)}"]& {[0,n]}\arrow[d, "\pi^n"]\\
S^1 \arrow[dotted, swap]{r}{\exists ! H} & S^1
\end{tikzcd}
\end{center}
Therefore the following diagram also commutes.
\begin{center}
\begin{tikzcd}[column sep=small]
S^1 \arrow[rr,"H"] \arrow[swap , rd, "R_{1/x}\circ \overline{f}"]& & S^1\arrow[dl, "c_n"]\\
& S^1&
\end{tikzcd}
\end{center}
Hence as the map $c_n$ is a $n$-fold covering, the map $R_{1/x}\circ \overline{f}$ is a $n$-fold covering. 
Therefore the claim is proved. Thus the claim implies that the map $\overline{f}$ is a $n$-fold covering. Likewise we can prove the case where the map $f$ is clockwise. 
\end{proof}
We end this section by stating simple properties of the winding number. These lemmas follow from  Lemma \ref{winding-calculuation}.

\begin{lem} \label{windingplusconstant}
Let $f$ be a map from $\RR$ to $S^1.$  Suppose that the map $f$ is clockwise or anticlockwise and that $\displaystyle \lim_{x \to -\infty} f(x)$ and $\displaystyle \lim_{x \to \infty} f(x)$ exist and are equal. 
Then for any $c \in \RR,$ a map $g_c: \RR \to \RR \cup \{ \infty \}$ defined by $g_c(x):=f(x)+c$ satisfies that 
$\displaystyle \lim_{x \to -\infty} g_c(x)$ and $\displaystyle \lim_{x \to \infty} g_c(x)$ exist and are equal.
Moreover $w(\overline{f})=w(\overline{g_c})$ for all $c\in \RR.$
\end{lem}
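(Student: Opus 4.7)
The plan is to reduce the claim to Lemma \ref{winding-calculuation} by first checking that $g_c$ inherits the hypotheses from $f$, and then recognizing $\overline{g_c}$ as a post-composition of $\overline{f}$ with a homeomorphism of $S^1$, from which the equality of winding numbers follows.

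First I would verify the hypotheses of Lemma \ref{winding-calculuation} for $g_c$. The clockwise/anticlockwise property transfers directly: Lemma \ref{rotationplusconstant} shows that $g_c(x) = f(x) + c$ is anticlockwise (resp. clockwise) whenever $f$ is. For the limits at $\pm\infty$, let $L \in \RR \cup \{\infty\}$ denote the common limit of $f$. If $L \in \RR$, then $\lim_{x \to \pm\infty} g_c(x) = L + c$; if $L = \infty$, then $\lim_{x \to \pm\infty} g_c(x) = \infty$ as well. In either case the two limits exist and coincide, so the extension $\overline{g_c}$ is well defined.

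Next I would introduce the map $T_c : S^1 \to S^1$ defined by $T_c(y) = y + c$ for $y \in \RR$ and $T_c(\infty) = \infty$. Viewed on $S^1 = \RR \cup \{\infty\}$ this is (the restriction of) a parabolic Möbius transformation fixing $\infty$, hence an orientation-preserving homeomorphism of $S^1$. By construction $g_c = T_c \circ f$ on $\RR$, and so by continuity and the definition of the extension map one has
\[
\overline{g_c} \;=\; T_c \circ \overline{f}
\]
on all of $S^1$, including at the point $\phi(\infty)$ (where the identity holds because $T_c$ fixes $\infty$ when $L = \infty$, and is continuous at $L$ otherwise).

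Finally, by Lemma \ref{winding-calculuation} both $\overline{f}$ and $\overline{g_c}$ are covering maps whose winding numbers equal their numbers of sheets. Since $T_c$ is a homeomorphism, post-composition with $T_c$ sends an $n$-fold cover to an $n$-fold cover, so $\overline{g_c}$ has the same number of sheets as $\overline{f}$, and therefore $w(\overline{f}) = w(\overline{g_c})$. I expect the only delicate point to be the verification of the factorization $\overline{g_c} = T_c \circ \overline{f}$ at the ideal point when $L = \infty$, but this is handled cleanly by the fact that $T_c$ is a Möbius map fixing $\infty$.
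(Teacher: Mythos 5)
Your proof is correct and follows the same route the paper intends: the paper's entire justification is the remark that the lemma ``follows from Lemma \ref{winding-calculuation},'' and your argument simply fills in the details of that reduction (hypothesis transfer via Lemma \ref{rotationplusconstant}, the factorization $\overline{g_c}=T_c\circ\overline{f}$ through a homeomorphism of $S^1$, and invariance of the number of sheets under post-composition with a homeomorphism). No gaps.
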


\begin{lem} \label{windinginverse}
Let $f$ be a map from $\RR$ to $S^1.$  Suppose that the map $f$ is clockwise or anticlockwise and that $\displaystyle \lim_{x \to -\infty} f(x)$ and $\displaystyle \lim_{x \to \infty} f(x)$ exist and are equal. Then 
$\displaystyle \lim_{x \to -\infty} g(x)$ and $\displaystyle \lim_{x \to \infty} g(x)$ exist and are equal where the map $g$ is defined by $g(x)=1/f(x)$ as a map from $\RR$ to $\RR\cup \{\infty\}.$ Moreover $w(\overline{f})=w(\overline{g}).$
\end{lem}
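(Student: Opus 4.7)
The plan is to reduce the claim to an identification of $g$ with $f$ via a homeomorphism of $S^1$ on the target side, after which the statement about winding numbers follows from Lemma \ref{winding-calculuation}. Under the Cayley transformation $\phi$ identifying $\RR \cup \{\infty\}$ with $S^1$, the inversion map $z \mapsto 1/z$ on $\RR \cup \{\infty\}$ corresponds to a homeomorphism $\sigma$ of $S^1$ (one can compute $\sigma(w) = 1/w$ on the unit circle, but only the fact that $\sigma$ is a homeomorphism will matter). So, as maps into $S^1$, we have $g = \sigma \circ f$.

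First I would verify that the two limits of $g$ at $\pm \infty$ exist and coincide. Since $\sigma$ is continuous on $S^1$ and $\lim_{x\to -\infty} f(x)=\lim_{x\to\infty} f(x)$ by hypothesis, both $\lim_{x\to-\infty} g(x)$ and $\lim_{x\to\infty} g(x)$ exist and equal $\sigma$ applied to the common limit of $f$. Next, by Lemma \ref{rotationinverse}, $g$ is clockwise (resp.\ anticlockwise) whenever $f$ is anticlockwise (resp.\ clockwise). Hence $g$ satisfies the hypotheses of Lemma \ref{winding-calculuation}, which tells us that $\overline{g}$ is a covering map of $S^1$ whose number of sheets equals $w(\overline{g})$; similarly, $\overline{f}$ is a covering map with $w(\overline{f})$ sheets.

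To finish, I identify $\overline{g}$ with $\sigma \circ \overline{f}$ by uniqueness of the continuous extension: both maps are continuous on $S^1$ and agree with $g$ on the dense subset $\phi(\RR)\subset S^1$. Since $\sigma$ is a homeomorphism, the covering $\sigma\circ \overline{f}$ has the same fiber cardinalities as $\overline{f}$, so the two covering maps have the same number of sheets. Comparing with the sheet counts from Lemma \ref{winding-calculuation} yields $w(\overline{g})=w(\overline{f})$. I do not anticipate a serious obstacle; the only point requiring care is checking that extension commutes with post-composition by $\sigma$, and this is a standard consequence of density of $\phi(\RR)$ in $S^1$ together with continuity.
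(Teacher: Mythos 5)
Your proof is correct and takes essentially the route the paper intends: the paper gives no written argument, saying only that the lemma ``follows from Lemma \ref{winding-calculuation}.'' Your filling-in --- writing $g=\sigma\circ f$ for the homeomorphism $\sigma$ of $S^1$ induced by inversion, checking (anti)clockwiseness via Lemma \ref{rotationinverse}, identifying $\overline{g}=\sigma\circ\overline{f}$ by density, and comparing sheet counts of the two coverings --- is exactly the natural way to justify that assertion.
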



\section{A Stabilizer Subgroup of the Circle Action} \label{CircleAction}

From now on we discuss the case where $\alpha$ is a positive real number. 
The positivity condition is not restrictive since $G_{\alpha} = G_{-\alpha}$ and $G_0$ is not of rank 2.

\begin{prop} \label{prop4}
Let $\alpha$ be a positive real number. If $\alpha$ is a free number, then for any $p\in S^1,$ the group $\Stab_{G_{\alpha}}(p)$ is either trivial or isomorphic to $\ZZ$. Conversely, if there is a point $p$ such that  the group $\Stab_{G_{\alpha}}(p)$ is neither $\ZZ$ nor trivial, then $\alpha$ is a relation number.
\end{prop}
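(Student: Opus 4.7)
The two assertions of Proposition~\ref{prop4} are contrapositives of one another, so it suffices to prove the first: if $\alpha$ is a free number, then for every $p \in S^1$ the stabilizer $\Stab_{G_\alpha}(p)$ is either trivial or infinite cyclic. The whole plan rests on the elementary principle that every solvable subgroup of a free group is cyclic.

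First I would identify the ambient stabilizer $\Stab_{\SLR}(p)$ and show it is solvable. Since $\SLR$ acts transitively on $S^1 = \RR \cup \{\infty\}$ by M\"obius transformations, every such stabilizer is conjugate in $\SLR$ to the stabilizer of $\infty$, namely the Borel subgroup
$$B = \left\{ \begin{pmatrix} a & b \\ 0 & a^{-1} \end{pmatrix} : a \in \RR^{\times},\, b \in \RR \right\}.$$
The unipotent subgroup $U = \left\{ \begin{pmatrix} 1 & b \\ 0 & 1 \end{pmatrix} : b \in \RR \right\}$ is normal and abelian in $B$, and the quotient $B/U$ is isomorphic to the abelian group $\RR^{\times}$. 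Hence $B$, and therefore every $\Stab_{\SLR}(p)$, is metabelian; in particular it is solvable.

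Next I would combine this with the Nielsen--Schreier theorem. The subgroup $\Stab_{G_\alpha}(p) = G_\alpha \cap \Stab_{\SLR}(p)$ is a subgroup of the free group $G_\alpha$, so Nielsen--Schreier makes it free. It is also a subgroup of the metabelian group $\Stab_{\SLR}(p)$, so it is solvable. A free group of rank $n \geq 2$ is not solvable, because the commutator subgroup of a non-abelian free group is again a non-abelian free group and the derived series never terminates. Consequently $\Stab_{G_\alpha}(p)$ is a free solvable group of rank at most $1$, which means it is trivial or isomorphic to $\ZZ$. The converse follows by taking the contrapositive, and I expect no substantive obstacle: the only point that requires a brief verification is that $\Stab_{\SLR}(p)$ is metabelian, after which Nielsen--Schreier and the non-solvability of non-abelian free groups are invoked as standard facts.
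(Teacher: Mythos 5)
Your proof is correct and follows essentially the same route as the paper: the paper's own argument simply cites as ``well known'' that $\Stab_{G_\alpha}(p)$ is solvable and then concludes via freeness of $G_\alpha$, exactly the two facts you use. The only difference is that you supply the justification for solvability (conjugacy to the Borel subgroup, which is metabelian) that the paper leaves implicit.
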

\begin{proof}
It is  well known that in our setting, $\Stab_{G_{\alpha}}(p)$  is solvable. When $\alpha$ is a free number,   
$\Stab_{G_{\alpha}}(p)$  is trivial or isomorphic to $\ZZ$ since $G_\alpha$ is a free group of rank $2.$ The second part is the contrapositive of the first part. 
\end{proof}

 Since $A \in \Stab_{G_{\alpha}}(\infty)$ and $B_{\alpha} \in \Stab_{G_{\alpha}}(0)$, Corollary \ref{pointstabilizer} follows.
Before proving this, we need to define the length of $g \in G_{\alpha}.$ This length depends on its word representation. Let $\alpha$ be a real positive number and  $F_2(x_1,x_2)$  the free group of rank $2$  with a free basis $\{ x_1 , x_2 \}.$ 
For a reduced word $w=x_{\sigma_{1}} ^{p_1} \cdots x_{\sigma_{k}} ^{p_k}$ of $F_2(x_1,x_2)$, we define the $\textsf{length}$ of $w$ to be $k$, where $\sigma_{i}\in\{1,2\}$ for all $i\in \{1, 2, \cdots, k\}.$ 
Now we consider a homomorphism $q_{\alpha}:F_2(x_1,x_2) \to G_{\alpha}$ defined by $x_1 \mapsto A, x_2 \mapsto B_{\alpha}.$ 
For $g \in G_{\alpha}$, we say that $w$ is a $\textsf{lifting word}$ of $g$ if $q_{\alpha}(w)=g$ and $w$ is a reduced word in $F_2(x_1,x_2).$ 
If $G_{\alpha}$ is free, each element of $G_\alpha$ has a unique lifting word whereas if $\alpha$ is a relation number, then it does not. 
Nevertheless, we can consider the length of each lifting. Hence whenever we mention the length of $g \in G_{\alpha}$, it refers to the length of a particular lifting word of $g$.

\begin{cor} \label{pointstabilizer}
 Let $\alpha$ be a positive real number. If $\alpha$ is a free number, then $$\Stab_{G_{\alpha}}(\infty)=\{ A^k : k \in \ZZ \} \textnormal{ and } \Stab_{G_{\alpha}}(0)=\{ B_{\alpha} ^k : k \in \ZZ \}.$$
\end{cor}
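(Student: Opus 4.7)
The plan is straightforward: combine Proposition \ref{prop4} with the fact that the generators $x_1, x_2$ are primitive (not proper powers) in the free group $F_2(x_1,x_2)$.

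First, I would observe that the M\"obius action satisfies $A \cdot \infty = \infty$, so $A \in \Stab_{G_\alpha}(\infty)$. Moreover $A$ has infinite order (it is a nontrivial parabolic in $\SLR$), so $\Stab_{G_\alpha}(\infty)$ is not trivial. By Proposition \ref{prop4}, since $\alpha$ is a free number, this stabilizer must then be isomorphic to $\ZZ$. Write $\Stab_{G_\alpha}(\infty) = \langle h \rangle$, and pick the integer $n \neq 0$ with $A = h^n$.

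Now I would use that $\alpha$ being free makes the homomorphism $q_\alpha : F_2(x_1,x_2) \to G_\alpha$ an isomorphism. Let $w = q_\alpha^{-1}(h) \in F_2(x_1,x_2)$ be the (unique) lifting word of $h$. Then in $F_2(x_1,x_2)$ we have the equality $w^n = x_1$. A one-line abelianization check (the $x_1$-exponent sum of $w^n$ equals $n$ times the $x_1$-exponent sum of $w$, and must equal $1$) forces $n \in \{+1,-1\}$, so $h \in \{A, A^{-1}\}$ and hence $\Stab_{G_\alpha}(\infty) = \{A^k : k \in \ZZ\}$.

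The argument for $\Stab_{G_\alpha}(0)$ is identical after replacing $A$ by $B_\alpha$, the fixed point $\infty$ by $0$, and the generator $x_1$ by $x_2$: one checks $B_\alpha \cdot 0 = 0$ and that $B_\alpha$ has infinite order, invokes Proposition \ref{prop4} to get an infinite cyclic stabilizer, then uses the primitivity of $x_2$ in $F_2(x_1,x_2)$ via the same abelianization argument. There is no real obstacle here; the corollary is essentially a bookkeeping consequence of Proposition \ref{prop4}, and the only subtlety worth spelling out is why a free generator of $F_2$ is not a proper power.
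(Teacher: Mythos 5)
Your proof is correct and takes essentially the same route as the paper: both apply Proposition \ref{prop4} to conclude that $\Stab_{G_{\alpha}}(\infty)$ is infinite cyclic with some generator $h$ satisfying $h^n=A$, and then rule out $|n|>1$ using the fact that a free generator is not a proper power. Your abelianization (exponent-sum) argument is in fact a cleaner justification of the step the paper handles by asserting that $A^n=x^m$, with $x$ of syllable length at least $2$, would contradict freeness.
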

\begin{proof}
It follows at once that $\Stab_{G_{\alpha}}(\infty) \supset \{ A^k : k \in \ZZ \}$ so it suffices to show that $\Stab_{G_{\alpha}}(\infty) \subset \{ A^k : k \in \ZZ \}$. Assume that there is an element  $x$  in $\Stab_{G_{\alpha}}(\infty) - \{ A^k : k \in \ZZ \}.$  
Then since $B_{\alpha} ^k \not \in \Stab_{G_{\alpha}}(\infty)$, the length of $x$ is at least $2$. By Proposition \ref{prop4}, $\Stab_{G_{\alpha}}(\infty) \cong \ZZ$ and there is a generator $y$ of $\Stab_{G_{\alpha}}(\infty)$. 
Then $y^m = A$  and $y^n = x$ for some $m,n \in \ZZ.$ This gives that  $A^n = x^m.$ This gives a relation in $G_{\alpha}$ since the length of $x$ is at least 2 . 
This contradicts the freeness of $G_{\alpha}$. A similar argument can apply to the second one. Thus we are done. 
\end{proof}


\section{The Generalized Farey Graph} \label{TheGeneralizedFareyGraph}
In this section, we define a graph $\Gamma_{\alpha}$ which is a generalization of the Farey graph, and prove Theorem \ref{thethm}.

\begin{defn}
 Let $\alpha$ be a real number. The \textsf{generalized Farey graph} $\Gamma_{\alpha}$ at $\alpha$ is the graph with vertex set 
$$V = \{ g(0) , g(\infty) : g \in G_{\alpha} \}$$ and edge set
$$E = \{ g \cdot \ell : g \in G_{\alpha} \},$$ where $\ell$ is the set $\{ 0 , \infty \}.$
\end{defn}


 The graph $\Gamma_{\alpha}$ has the following combinatorial properties.

\begin{lem}
Let $\alpha$ be a positive real number. Then the graph $\Gamma_{\alpha}$ is connected and not locally finite.
\end{lem}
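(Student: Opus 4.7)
The plan is to establish connectedness directly from the word structure of $G_\alpha$, and to establish the failure of local finiteness by exhibiting infinitely many edges at a single vertex.

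For connectedness, I would first note that $\ell = \{0,\infty\}$ is itself an edge, so $0$ and $\infty$ already lie in the same component. Given an arbitrary element $g \in G_\alpha$, I would choose a word representation $g = w_1 w_2 \cdots w_n$ with each $w_i \in \{A^{\pm 1}, B_\alpha^{\pm 1}\}$ and examine the sequence of edges
\[
\ell, \; w_1 \cdot \ell, \; (w_1 w_2) \cdot \ell, \; \ldots, \; (w_1 \cdots w_n) \cdot \ell = g \cdot \ell.
\]
The key observation is that $A$ fixes $\infty$ while $B_\alpha$ fixes $0$. Hence two consecutive edges $(w_1 \cdots w_{i-1}) \cdot \ell$ and $(w_1 \cdots w_i) \cdot \ell$ always share a vertex: namely $w_1 \cdots w_{i-1}(\infty)$ if $w_i \in \{A^{\pm 1}\}$, and $w_1 \cdots w_{i-1}(0)$ if $w_i \in \{B_\alpha^{\pm 1}\}$. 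This assembles a walk in $\Gamma_\alpha$ from a vertex of $\ell$ to a vertex of $g \cdot \ell = \{g(0), g(\infty)\}$. Since every vertex of $\Gamma_\alpha$ has the form $g(0)$ or $g(\infty)$ for some $g \in G_\alpha$, this shows that every vertex is connected to one of $0, \infty$, and hence that $\Gamma_\alpha$ is connected.

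For the failure of local finiteness, it suffices to exhibit a single vertex of infinite degree, and I would use $\infty$. Because $A \cdot \infty = \infty$ and $A^k \cdot 0 = k$, the edges $A^k \cdot \ell = \{k, \infty\}$ for $k \in \ZZ$ are pairwise distinct (different integer endpoints) and each contains the vertex $\infty$. This produces infinitely many edges incident to $\infty$, so $\Gamma_\alpha$ is not locally finite.

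There is no real obstacle in this argument; the only mild points to verify are that the walk above is well-defined as a walk in the simple graph $\Gamma_\alpha$ (consecutive edges must be taken as abstract two-element subsets, and the walk may have backtracking, which does not affect connectedness) and that the edges $\{k, \infty\}$ are genuinely distinct for distinct $k \in \ZZ$, which is immediate.
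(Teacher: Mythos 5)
Your proof is correct and follows essentially the same route as the paper: connectedness by induction on word length, using that a generator translate of $\ell$ shares the vertex $\infty$ (for $A^{\pm 1}$) or $0$ (for $B_\alpha^{\pm 1}$) with $\ell$, and non-local-finiteness via the infinitely many distinct edges $A^k\cdot\ell=\{k,\infty\}$ at the vertex $\infty$. The only cosmetic difference is that you assemble an explicit walk of edges while the paper phrases the same induction in terms of translates of a small star-shaped subgraph $S$.
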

\begin{proof}
Let $S$ be a subgraph of $\Gamma_\alpha$ with  vertex set 
$$V_S=\ell \cup (A \cdot \ell ) \cup (B_{\alpha} \cdot \ell) \cup (A^{-1} \cdot \ell) \cup (B_{\alpha} ^{-1} \cdot \ell)$$
and edge set 
$$E_S=\{\ell, A \cdot \ell,B_{\alpha} \cdot \ell , A^{-1} \cdot \ell, B_{\alpha} ^{-1} \cdot \ell \}.$$
Note that $S$ is a connected subgraph of $\Gamma_\alpha$ and observe that for any word $X\in F_2(x_1,x_2),$ the subgraph 
with  vertex set $q_\alpha(X) \cdot V_S$ and edge set $q_\alpha(X)\cdot E_S$ is connected. By induction on the word length of $X$, the graph $\Gamma_{\alpha}$ is connected. 
Moreover, $\Gamma_{\alpha}$ is not locally finite because the vertex $\infty$ is connected to all of integers by the edges \{ $A^n \cdot \ell : n\in \ZZ\}$.
\end{proof}
So the graph $\Gamma_{\alpha}$ shares some properties with the Farey graph. 
Indeed, the generalized Farey graph $\Gamma_{1}$ at $1$ is the Farey graph. It follows from the fact that $G_1=\SLZ.$ 
This is why we call the graph $\Gamma_{\alpha}$ the generalized Farey graph.

In Theorem \ref{thethm}, we use the fact that each relation of $G_{\alpha}$ corresponds to a cycle in $\Gamma_{\alpha}$. More precisely, for each reduced word $w$ of $F_2(x_1,x_2)$, there is a corresponding path in $\Gamma_{\alpha}$. 
The following lemma says that there is such a canonical correspondance. For convenience, a word $W=C_1 ^{p_1} \cdots C_k ^{p_k}$ in $G_{\alpha}$ means that $W\in G_{\alpha}$, $\{C_i,C_{i+1}\}= \{A, B_{\alpha}\}$ and there is a lifting word $w=x_{\sigma_1} ^{p_1} \cdots x_{\sigma_k} ^{p_k}$ of $W.$

\begin{lem} \label{wordtopath}
Let $\alpha$ be a positive real number and $W=C_1 ^{p_1} \cdots C_k ^{p_k}$ be a word of length $k>0$ in $G_{\alpha}$. Then a sequence of edges
$$ \ell , (C_1 ^{p_1} \cdot \ell) , (C_1 ^{p_1} C_2 ^{p_2} \cdot \ell),  \cdots , (C_1 ^{p_1} \cdots C_k ^{p_k} \cdot \ell) =W \cdot \ell $$ is a path without backtracking. Thus the length of this path is $k+1$.
\end{lem}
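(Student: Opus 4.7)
The plan is to verify the two defining properties of a path without backtracking: consecutive edges share a vertex, and no two adjacent edges coincide. Both statements reduce, via equivariance of the $G_\alpha$-action on $\Gamma_\alpha$, to single-generator calculations, and both exploit the basic identities $A^n\cdot \infty = \infty$ and $B_\alpha^n\cdot 0 = 0$ combined with the reducedness built into the word $W = C_1^{p_1}\cdots C_k^{p_k}$, so in particular every $p_i \neq 0$.

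Write $g_0 = \Id$ and $g_i = C_1^{p_1}\cdots C_i^{p_i}$, so that the listed sequence is $g_0\cdot \ell,\ g_1\cdot \ell,\ \ldots,\ g_k\cdot \ell$, a total of $k+1$ edges. Showing that $g_{i-1}\cdot \ell$ and $g_i\cdot \ell$ meet at a common vertex is equivalent, after acting by $g_{i-1}^{-1}$, to showing that $\ell$ and $C_i^{p_i}\cdot \ell$ share a vertex. A direct computation gives $A^n\cdot \ell = \{n,\infty\}$ and $B_\alpha^n\cdot \ell = \{0,\ 1/(n\alpha)\}$, so $\ell$ and $C_i^{p_i}\cdot \ell$ always share the vertex $\infty$ (when $C_i = A$) or $0$ (when $C_i = B_\alpha$). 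This settles adjacency.

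The no-backtracking condition, by the same equivariance, is equivalent to $C_i^{p_i}\cdot \ell \neq \ell$ for each $i$. Reading off the sets above, $\{n,\infty\}\neq \{0,\infty\}$ whenever $n\neq 0$, and $\{0, 1/(n\alpha)\}\neq \{0,\infty\}$ whenever $n\neq 0$ and $\alpha > 0$. Since every $p_i$ is nonzero, both possibilities are excluded. Counting the $k+1$ edges then gives a path of length $k+1$ under the convention fixed in Section \ref{Preliminaries}.

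I do not anticipate any substantive obstacle; the argument is a clean bookkeeping exercise. The only mildly subtle point worth flagging is that the alternation $C_{i+1}\neq C_i$, although part of the definition of a word in $G_\alpha$, is not actually invoked in either local computation above---only the nonvanishing of each $p_i$ matters. The alternation will instead be used elsewhere (once one needs to conclude, for example, that distinct elements of $F_2(x_1,x_2)$ produce the same edge of $\Gamma_\alpha$ only through genuine relations in $G_\alpha$), but it plays no role in the present lemma.
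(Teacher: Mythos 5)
Your local computations are correct, but the two properties you verify --- consecutive edges share a vertex, and consecutive edges are distinct --- do not suffice for the sequence to be a path in the sense this paper uses. A path is by definition the image of a graph morphism $P_n \to \Gamma_\alpha$, so you must exhibit a consistent vertex sequence $v_0, v_1, \ldots, v_{k+1}$ with the $i$-th edge equal to $\{v_{i-1}, v_i\}$. This forces the vertex at which edge $i+1$ meets edge $i$ to be \emph{different} from the vertex at which it meets edge $i+2$, and that is exactly where the alternation $C_{i+1}\neq C_i$ enters: writing $g_i = C_1^{p_1}\cdots C_i^{p_i}$, the edge $g_i\cdot\ell$ meets its predecessor at $g_i(\infty)$ or $g_i(0)$ according as $C_i = A$ or $C_i = B_\alpha$, and meets its successor at $g_i(\infty)$ or $g_i(0)$ according as $C_{i+1} = A$ or $C_{i+1} = B_\alpha$; since $C_{i+1}\neq C_i$, these are the two distinct endpoints of $g_i\cdot\ell$, so the $v_j$ can be chosen consistently.

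Your closing remark that the alternation ``plays no role in the present lemma'' is therefore exactly backwards. Without it the conclusion is false: the sequence $\ell,\ (A\cdot\ell),\ (A^2\cdot\ell)$, that is $\{0,\infty\},\{1,\infty\},\{2,\infty\}$, satisfies both of your local conditions (consecutive edges meet at $\infty$ and are pairwise distinct), yet it is not a path, since any vertex assignment forces $v_1=\infty$ and $v_2=1$, after which $\{v_2,v_3\}$ cannot equal $\{2,\infty\}$. The paper records precisely this non-example in the proof of Lemma \ref{pathrule}, and its proof of the present lemma avoids the problem by analyzing the three-edge configurations $g\cdot\ell,\ (gA^p\cdot\ell),\ (gA^pB_\alpha^q\cdot\ell)$ and $g\cdot\ell,\ (gB_\alpha^p\cdot\ell),\ (gB_\alpha^pA^q\cdot\ell)$, which is where the alternation is consumed. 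The fix is short --- insert the observation above --- but as written your argument proves too much and is therefore incomplete.
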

\begin{proof}
First note that  for any $p\in \ZZ-\{0\}$, $\ell \cap (A^p \cdot \ell) =\{\infty\}$ and $\ell \cap (B_\alpha^p\cdot\ell) =\{0\}.$ Hence for any $g\in G_\alpha$, $g\cdot \ell \cap (gA^p \cdot \ell) =\{g(\infty)\}$ and $g\cdot \ell \cap gB_\alpha^p \cdot  \ell =\{g(0)\}.$ This also implies that for any two integers $p$ and  $q$ in $\ZZ-\{0\},$  two finite sequences   
$$\ell , (A^p \cdot \ell) , (A^pB_\alpha^q \cdot \ell)$$
and 
$$\ell , (B_\alpha ^p \cdot \ell) , (B_\alpha^p A^q \cdot \ell)$$
are paths without backtracking . Therefore for any $g\in G_\alpha,$  two sequences $$g\cdot \ell , (gA^p \cdot \ell) , (gA^pB_\alpha^q \cdot \ell)$$
and 
$$g\cdot \ell , (gB_\alpha ^p \cdot \ell) , (gB_\alpha^p A^q \cdot \ell)$$
are paths without backtracking. See Figure \ref{figureedges}.

When $k$ equals $1,$ the result follows directly. Also when $k$ is greater than $1,$  the finite sequence $$ \ell , (C_1 ^{p_1} \cdot \ell) , (C_1 ^{p_1} C_2 ^{p_2} \cdot \ell),  \cdots , (C_1 ^{p_1} \cdots C_k ^{p_k} \cdot \ell) $$ is a path without backtracking. 
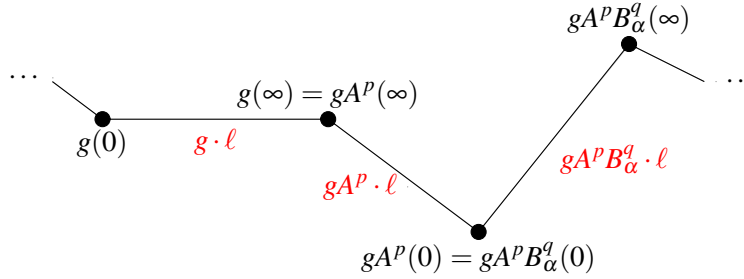
\begin{figure} [h!] 
\begin{center}
	\begin{tikzpicture}
	\draw [black,fill] (-2/3,1/2) -- (0,0) circle [radius=0.1] node [black,below] {$g(0)$} --  (3,0) circle [radius=0.1] node [black,above] {$g (\infty) = g A^{p} (\infty)$} -- (5,-1.5) circle [radius=0.1] node [black,below] {$g A^{p} ( 0) = g A^{p} B_{\alpha} ^{q} ( 0) $} -- (7,1)  circle [radius=0.1] node [black,above] {$g A^{p} B_{\alpha} ^{q} ( \infty) $} -- (8,0.5) ;
	\draw [black,fill] (1.5,-0.3) circle [radius=0] node [red] {$g \cdot \ell$};
	\draw [black,fill] (4.05,-0.9) circle [radius=0] node [red,left] {$g A^{p} \cdot \ell$};
	\draw [black,fill] (6.8,-0.53) circle [radius=0] node [red] {$g A^{p} B_{\alpha} ^{q} \cdot \ell$};
	\draw [black,fill] (-2/3,0.55) circle [radius=0] node [black,left] {$\cdots$};
	\draw [black,fill] (8.15,1/2) circle [radius=0] node [black,right] {$\cdots$};
	\end{tikzpicture}
\caption{The edges $g \cdot \ell , gA^{p} \cdot \ell$ and $g A^{p} B_{\alpha} ^{q} \cdot \ell$.}
\label{figureedges}
\end{center}
\end{figure}

\end{proof}

Then we discuss the elements in $\Stab_{G_\alpha}(\ell).$ The following lemma will imply one direction of Theorem \ref{thethm}.

\begin{lem} \label{edgestabilizer}
Let $\alpha$ be a positive real number and $g$ be an element in $\Stab_{G_\alpha}(\ell).$ Assume that there is a lifting word $s$ of $g$ in $F_2(x_1,x_2)$ such that $s$ is not the empty word. Then the length of $s$ is at least $3$.
\end{lem}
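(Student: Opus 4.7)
The plan is to carry out a finite case analysis on $k\in\{1,2\}$: for each possible shape of the reduced word $s$, compute the matrix $g=q_\alpha(s)$ directly and verify that at least one of $g\cdot 0$ or $g\cdot \infty$ does not lie in $\ell=\{0,\infty\}$, contradicting $g\in\Stab_{G_\alpha}(\ell)$. Because $s$ is reduced, for small $k$ the list of possible shapes is extremely short, so this is entirely mechanical.

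For $k=1$, the reduced word is either $x_1^p$ or $x_2^p$ with $p\in\ZZ\setminus\{0\}$, giving $g=A^p$ or $g=B_\alpha^p$. A direct computation gives $A^p\cdot 0 = p$, which is nonzero and finite, and $B_\alpha^p\cdot \infty = 1/(p\alpha)$, which is nonzero and finite because $p\alpha\neq 0$. In either case $g\cdot \ell\neq \ell$, ruling out $k=1$.

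For $k=2$, the reduced word is $x_1^p x_2^q$ or $x_2^p x_1^q$ with $p,q\in\ZZ\setminus\{0\}$. Multiplying the corresponding $2\times 2$ matrices yields
\[
A^p B_\alpha^q = \begin{pmatrix} 1+pq\alpha & p \\ q\alpha & 1 \end{pmatrix}, \qquad B_\alpha^p A^q = \begin{pmatrix} 1 & q \\ p\alpha & 1+pq\alpha \end{pmatrix}.
\]
Reading off the M\"obius action gives $A^p B_\alpha^q \cdot 0 = p$ and $B_\alpha^p A^q \cdot \infty = 1/(p\alpha)$; both are nonzero finite real numbers because $p, q\in\ZZ\setminus\{0\}$ and $\alpha\neq 0$. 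Hence neither matrix sends $\ell$ into $\ell$, so $k=2$ is impossible as well. Combining the two cases shows that the length of $s$ is at least $3$.

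There is no genuine obstacle here beyond organizational care: one must remember to check both possible orderings of $A$ and $B_\alpha$ in the length-two case, and in each one to pick the image ($g\cdot 0$ or $g\cdot \infty$) that is visibly outside $\ell$. Conceptually, the argument exploits the fact that $A$ and $B_\alpha$ are distinct parabolics whose fixed points $\infty$ and $0$ are precisely the two vertices of $\ell$, so that any short alternating product must move at least one endpoint of $\ell$ off of $\ell$. The positivity of $\alpha$ enters only through $\alpha\neq 0$.
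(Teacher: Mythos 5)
Your proof is correct and follows essentially the same route as the paper: an explicit matrix computation ruling out reduced words of length $1$ and $2$, using the same products $A^pB_\alpha^q$ and $B_\alpha^pA^q$. Your version is in fact slightly more direct, since you simply exhibit an endpoint of $\ell$ whose image lies outside $\{0,\infty\}$, whereas the paper first splits into the cases $\Fix(g)=\ell$ and $\Fix(g)=\emptyset$ and compares $g$ with diagonal and antidiagonal matrices.
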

\begin{proof}
Since $s$ is not the empty word, we can write $g=q_\alpha(s)=C_1 ^{p_1} \cdots C_k ^{p_k}$ as a word in $G_\alpha$ for some $k\in \NN$. 
Note that for any $p\in \ZZ-\{0\}$, $\ell \cap (A^p \cdot \ell) =\{\infty\}$ and $\ell \cap (B_\alpha^p\cdot\ell) =\{0\}.$ 
Hence $k$ can not be $1.$ Now we assume that $k=2,$ that is   $q_\alpha(s)=C_1 ^{p_1} C_2 ^{p_2}.$

Since $g(\ell)=\ell,$ there are two possible cases: $\Fix(g)=\ell$; $\Fix(g)=\emptyset.$  
If $\Fix(g)=\ell$, then $g$ is of the form $$\begin{bmatrix}
a & 0 \\ 0 & \frac{1}{a}
\end{bmatrix}$$ for some $a\in \RR.$ 
On the other hand, 
 $$A^{p_1} B_{\alpha} ^{p_2} = \begin{bmatrix}
1 + p_1p_2 \alpha & p_1 \\ p_2 \alpha & 1 
\end{bmatrix} \textnormal{ and } B_{\alpha} ^{p_1} A^{p_2}=
\begin{bmatrix}
1 & p_2 \\ p_1 \alpha & 1+ p_1 p_2 \alpha
\end{bmatrix},$$
and this implies that $p_1$ and $p_2$ are zero. This is a contradiction. 

 Now we consider the case where $\Fix(g)=\emptyset.$ Then $g$ is of the form $$\begin{bmatrix}
0 & a \\ - \frac{1}{a} & 0
\end{bmatrix}$$ for some $a\in \RR.$ Similarly  this is also a contradiction. 
Thus the length of $s$ is at least $3$. 
\end{proof}

\begin{rmk}
 The above lemma does not hold for all edge $y$ in $\Gamma_{\alpha}$. For example, consider the case where $\alpha = 1$ and the edge is  $y=B_1 \cdot \ell$. 
Then $$M=A B_1 ^{-2} = \begin{bmatrix}
-1 & 1 \\ -2 & 1
\end{bmatrix}$$ fixes the edge $y$.
\end{rmk}

\begin{prop} \label{relationtonottree}
Let $\alpha$ be a positive real number. Assume that $\alpha$ is a relation number. Then the generalized Farey graph $\Gamma_{\alpha}$ is not tree.
\end{prop}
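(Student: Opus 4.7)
The plan is to extract from a nontrivial relation in $G_\alpha$ a long closed walk in $\Gamma_\alpha$ without backtracking via Lemma \ref{wordtopath}, and then exhibit two distinct no-backtracking paths between a common pair of vertices so that Lemma \ref{DefTree} forbids $\Gamma_\alpha$ from being a tree.

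Since $\alpha$ is a relation number, the homomorphism $q_\alpha\colon F_2(x_1,x_2)\to G_\alpha$ is not injective, so there exists a nonempty reduced word $s\in F_2(x_1,x_2)$ with $q_\alpha(s)=\mathrm{id}$. Because $\mathrm{id}\in\Stab_{G_\alpha}(\ell)$, Lemma \ref{edgestabilizer} forces the length $k$ of $s$ to satisfy $k\geq 3$. Writing $s=x_{\sigma_1}^{p_1}\cdots x_{\sigma_k}^{p_k}$ and $q_\alpha(s)=C_1^{p_1}\cdots C_k^{p_k}=\mathrm{id}$, Lemma \ref{wordtopath} produces a path $P$ without backtracking, given by the edge sequence
$$e_0=\ell,\quad e_1=C_1^{p_1}\!\cdot\!\ell,\quad \ldots,\quad e_k=(C_1^{p_1}\cdots C_k^{p_k})\cdot\ell=\ell,$$
of length $k+1\geq 4$.

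Label the consecutive vertices $v_0,v_1,\ldots,v_{k+1}$ of $P$ so that $e_i=\{v_i,v_{i+1}\}$. Since $e_0=e_k=\ell=\{0,\infty\}$, both unordered pairs $\{v_0,v_1\}$ and $\{v_k,v_{k+1}\}$ equal $\{0,\infty\}$, leaving exactly two subcases. If $v_1=v_k$ (equivalently $v_0=v_{k+1}$), then the initial segment $e_0,e_1,\ldots,e_{k-1}$ of $P$ is a path without backtracking from $v_0$ to $v_k=v_1$ of length $k\geq 3$. If instead $v_1=v_{k+1}$ (equivalently $v_0=v_k$), then the full path $P$ itself runs from $v_0$ to $v_{k+1}=v_1$ and has length $k+1\geq 4$. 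In either subcase one obtains a path $Q$ without backtracking from $v_0$ to $v_1$ of length at least $3$; on the other hand, the single edge $\ell$ is itself a path without backtracking from $v_0$ to $v_1$ of length $1$. Since $Q$ and this edge have different lengths, they are distinct paths without backtracking between the same two vertices, and Lemma \ref{DefTree} implies that $\Gamma_\alpha$ is not a tree.

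The main technical point is simply the case split on which endpoint of $\ell$ the walk begins and ends at; everything else is bookkeeping. The length lower bound $k\geq 3$ from Lemma \ref{edgestabilizer} is what actually rules out the degenerate possibility that the two candidate paths could coincide, so that lemma plays an essential role in the argument.
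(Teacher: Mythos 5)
Your proposal is correct and follows essentially the same route as the paper: both extract a nonempty lifting word of the identity, invoke Lemma \ref{edgestabilizer} to get length at least $3$ and Lemma \ref{wordtopath} to get a closed no-backtracking edge path beginning and ending at $\ell$, and conclude non-treeness. The only difference is that the paper simply asserts this closed path contains a cycle, whereas you spell out the final deduction via the case split on endpoints and Lemma \ref{DefTree}; that extra care is harmless and arguably tightens the paper's last step.
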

\begin{proof}
Since $\alpha$ is a relation number, there is a lifting word $w$ of the identity $I$ of $G_\alpha$ in $F_2(x_1,x_2)$ which is not the empty word. Then we  can write $W = q_\alpha(w)= C_1 ^{p_1} \cdots C_k ^{p_k}$ for some $k\in \NN$ as a word in $G_\alpha. $ Since $W=I\in \Stab_{G_\alpha}(\ell),$ by Lemma \ref{edgestabilizer}, we get that $k \geq 3$. 

Now we consider the corresponding sequence

$$\ell , (C_1 ^{p_1} \cdot \ell) , (C_1 ^{p_1} C_2 ^{p_2} \cdot \ell) , \cdots , (C_1 ^{p_1} \cdots C_k ^{p_k}\cdot \ell) =W \cdot \ell.$$
By Lemma \ref{wordtopath}, it is a path without backtracking and so the length of this path is at least $4$.
Therefore, this path has a cycle as a subgraph since $W\cdot \ell= \ell.$ Thus $\Gamma_{\alpha}$ is not tree.
\end{proof}

\begin{exam}
 Recall that $G_1 = \SLZ$ which is not a  free group of rank $2$. Then $(x_1^{-1}x_2)^6$ is one of the lifting words of the identity $I$. So the sequence of the edges
$$\ell , (A^{-1} \cdot \ell) , (A^{-1} B_1 \cdot \ell) , \cdots , (A^{-1}B_1)^6 \cdot \ell $$ has a cycle as a subgraph. Indeed, three edges $\ell , A^{-1} \cdot \ell , A^{-1}B \cdot \ell$ give a cycle. This implies that the graph $\Gamma_1$ is not tree.
\end{exam}

 Hence, Proposition \ref{relationtonottree} gives one part of Theorem \ref{thethm}.
To prove the converse, we will show that if $\alpha$ is a free number, then $\Gamma_{\alpha}$ must be tree.
\begin{lem} \label{freestabilizer}
Let $\alpha$ be a positive real number. If $\alpha$ is a free number, every edge stabilizer in the graph $\Gamma_{\alpha}$ is trivial.
\end{lem}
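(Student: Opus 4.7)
The plan is to reduce the lemma to a single computation: the stabilizer of the base edge $\ell = \{0, \infty\}$. Every edge in $\Gamma_{\alpha}$ has the form $g \cdot \ell$ for some $g \in G_{\alpha}$, and its stabilizer is the conjugate $g\,\Stab_{G_\alpha}(\ell)\,g^{-1}$. Conjugation preserves triviality, so it suffices to prove $\Stab_{G_\alpha}(\ell) = \{I\}$ under the hypothesis that $\alpha$ is a free number.

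Given $g \in \Stab_{G_\alpha}(\ell)$, I would split into two cases according to how $g$ permutes the two endpoints. In the first case, $g$ fixes both $0$ and $\infty$ pointwise, so $g \in \Stab_{G_\alpha}(0) \cap \Stab_{G_\alpha}(\infty)$. By Corollary \ref{pointstabilizer}, this intersection equals $\{A^m : m \in \ZZ\} \cap \{B_\alpha^n : n \in \ZZ\}$, and any non-identity coincidence $A^m = B_\alpha^n$ would be a non-trivial relation between the free generators of $G_\alpha$, contradicting freeness. Hence $g = I$ in this case.

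In the second case, $g$ swaps $0$ and $\infty$. Then $g^2$ fixes each of the two points, so by the first case $g^2 = I$. But $G_\alpha$ is a free group of rank $2$, hence torsion-free, so the involution $g$ must itself be the identity. Combining the two cases gives $\Stab_{G_\alpha}(\ell) = \{I\}$, and the conjugation reduction then yields triviality of every edge stabilizer.

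I do not anticipate a genuine obstacle here: the argument is essentially a bookkeeping exercise combining Corollary \ref{pointstabilizer} with the torsion-freeness of free groups. The one point worth highlighting is the swap case, where one must remember that freeness forbids order-two elements; this rules out the possibility, which Lemma \ref{edgestabilizer} leaves open in general, that some non-identity element in $\Stab_{G_\alpha}(\ell)$ interchanges the two endpoints via an element conjugate to a rotation.
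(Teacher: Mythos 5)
Your proof is correct and follows essentially the same route as the paper's: reduce to $\Stab_{G_\alpha}(\ell)$ by conjugation, handle the case where $g$ fixes both endpoints via Corollary \ref{pointstabilizer} and freeness, and rule out the swapping case by torsion-freeness (the paper gets order $4$ directly from the anti-diagonal matrix form, while you get $g^2=I$ by feeding $g^2$ back into the first case — an immaterial difference).
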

\begin{proof}
 Choose an element $g \in \Stab_{G_{\alpha}}(\ell)$. Then there are two cases: $\Fix(g)=\ell;$ $\Fix(g)=\emptyset.$
 First when $\Fix(g)=\emptyset,$
the element $g$ is of the form $$\begin{bmatrix}
0 & a \\ -\frac{1}{a} & 0
\end{bmatrix}$$ for some $a\in \RR.$ Then 
$$s^4=\begin{bmatrix}
0 & a \\ -\frac{1}{a} & 0
\end{bmatrix}^4 = I$$
and so $s$ is of order $4.$
However this is a contradiction since $G_{\alpha}$ is torsion free. Therefore $s$ must fix $0$ and $\infty.$
From Corollary \ref{pointstabilizer}, we have that $\Stab_{G_{\alpha}}(0) \cap \Stab_{G_{\alpha}}(\infty)$ is trivial. Thus we have $$\Stab_{G_{\alpha}}(\ell)=1.$$
Choose an arbitrary edge in $\Gamma_{\alpha}$, say $g \cdot \ell$. Then note that
$$\Stab_{G_{\alpha}}(g \cdot \ell) = \{ g x g^{-1} : x \in \Stab_{G_{\alpha}}(\ell)\}=g\Stab_{G_\alpha}(\ell)g^{-1}.$$
Hence  $\Stab_{G_{\alpha}}(g \cdot \ell)$ is trivial as $\Stab_{G_{\alpha}}(\ell)$ is trivial.
\end{proof}

\begin{lem} \label{vertexadjacent}
Let $\alpha$ be a positive real number. When $\alpha$ is a free relation number, the sets of edges whose endpoints are $\infty$ and $0$ are $$\{ A^k \cdot \ell : k \in \ZZ \} \textit{ and } \{ B_{\alpha} ^k \cdot \ell : k \in \ZZ \}, \textit{ respectively.}$$
\end{lem}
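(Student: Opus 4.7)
The plan is to prove each of the two equalities by showing both inclusions. The containments $\{A^k \cdot \ell : k \in \ZZ\} \subseteq \{\text{edges through } \infty\}$ and $\{B_\alpha^k \cdot \ell : k \in \ZZ\} \subseteq \{\text{edges through } 0\}$ are immediate, since $A^k(\infty) = \infty$ and $B_\alpha^k(0) = 0$. For the reverse inclusion on edges through $\infty$, let $e = g \cdot \ell$ be such an edge, so that $\{g(0), g(\infty)\} \ni \infty$. If $g(\infty) = \infty$, then Corollary \ref{pointstabilizer} gives $g = A^k$ for some $k \in \ZZ$, and hence $e = A^k \cdot \ell$.

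The nontrivial step is to rule out the remaining possibility $g(0) = \infty$, which is where the freeness of $\alpha$ enters. The key observation is that conjugating $B_\alpha$ by $g$ produces an element that fixes $\infty$: setting $h := g B_\alpha g^{-1}$, and using $g^{-1}(\infty) = 0$ together with $B_\alpha(0) = 0$, one computes $h(\infty) = g B_\alpha(0) = g(0) = \infty$. Thus $h \in \Stab_{G_\alpha}(\infty) = \langle A \rangle$ by Corollary \ref{pointstabilizer}, and since $B_\alpha \ne I$ forces $h \ne I$, we would obtain $h = A^k$ for some nonzero integer $k$. This would yield the conjugacy relation $g B_\alpha g^{-1} = A^k$ inside $G_\alpha$. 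Since $G_\alpha$ is free of rank $2$ and is generated by $\{A, B_\alpha\}$, these two elements must form a free basis, so $q_\alpha : F_2(x_1, x_2) \to G_\alpha$ is an isomorphism; transporting the conjugacy back to $F_2$ would make $x_1^k$ and $x_2$ conjugate there, contradicting the fact that their images $(k, 0)$ and $(0, 1)$ in the abelianization $\ZZ^2$ are distinct. Hence the case $g(0) = \infty$ cannot occur.

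The statement for edges through $0$ is proved by the mirror-symmetric argument: replace $g B_\alpha g^{-1}$ with $g A g^{-1}$ and use $\Stab_{G_\alpha}(0) = \langle B_\alpha \rangle$ instead. The only step requiring any genuine insight is the conjugation trick $h := g B_\alpha g^{-1}$, which converts the hypothesis that $g$ sends $0$ to $\infty$ into an assertion about $\Stab_{G_\alpha}(\infty)$; once this is in hand the argument closes quickly via Corollary \ref{pointstabilizer} and the freeness of $G_\alpha$.
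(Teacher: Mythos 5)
Your proof is correct and follows essentially the same route as the paper: both reduce the case $g(0)=\infty$ to the conjugacy relation $gB_\alpha g^{-1}=A^k$ via Corollary \ref{pointstabilizer}, and both refute it with a homomorphism that separates $A$ from $B_\alpha$ (the paper uses $e_{\alpha,2}:G_\alpha\to\ZZ$ with $A\mapsto 0$, $B_\alpha\mapsto 1$, which is just one coordinate of your abelianization argument). The only cosmetic difference is that you make explicit the standard fact that a two-element generating set of a free group of rank $2$ is a free basis, which the paper uses implicitly when defining $e_{\alpha,2}$.
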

\begin{proof}
First suppose that  $g\cdot \ell$ is an edge adjacent to $\infty.$
Then either $g ( \infty)=\infty$ or $g(0)=\infty$.
If $g( \infty)=\infty$, then  $g \in \Stab_{G_{\alpha}}(\infty)$. By Corollary \ref{pointstabilizer}, $g=A^k$ for some $k \in \ZZ$. Then we consider the case where $g ( 0)=\infty.$ Let $M=g B_{\alpha} g^{-1}.$ 
Then $M \in \Stab_{G_{\alpha}}(\infty)$ and  by Corollary \ref{pointstabilizer}, we obtain that $g B_{\alpha} g^{-1} = A^k $ for some $k$. 
On the other hand, since $G_\alpha$ is a free group of rank $2,$ we can define a homomorphism $e_{\alpha,2}:G_{\alpha} \to \ZZ$ by $A \mapsto 0$ and $B_{\alpha} \mapsto 1.$ 
Since $e_{\alpha,2} (g B_{\alpha} g^{-1} A^{-k})=1 \neq 0,$ we can conclude that  $g B_{\alpha} g^{-1} A^{-k} \neq I.$ This is a contradiction. Thus, when $g \cdot \ell$ is an edge adjacent to $\infty$, $g$ must be in the set $\{ A^k : k \in \ZZ \}$. 
Similarly we can obtain the second statement.
\end{proof}

\begin{cor} \label{edgeadjacent}
Let $\alpha$ be a positive real number. Assume that $\alpha$ is a free number. 
Then for each edge $g \cdot \ell$ of $\Gamma_\alpha$, the sets of edges whose endpoints are $g(\infty)$ and $g(0)$ are $$ \{ g A^k \cdot \ell : k \in \ZZ  \} \ \text{and} \  \{ g B_{\alpha} ^k \cdot \ell : k \in \ZZ  \}, \ respectively,$$ 
and the set of edges adjacent to the edge $g \cdot \ell$ is  $$ \left \{ g A^k \cdot \ell : k \in \ZZ-\{0\}  \right \} \cup \left \{ g B_{\alpha} ^k \cdot \ell : k \in \ZZ-\{0\} \right \}.$$
\end{cor}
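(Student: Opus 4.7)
The plan is to deduce this corollary from Lemma \ref{vertexadjacent} by transporting the description of edges at $\infty$ and $0$ via the natural $G_\alpha$-action on $\Gamma_\alpha$. The key observation is that $G_\alpha$ acts on both the vertex set and the edge set of $\Gamma_\alpha$ by left multiplication, and this action preserves the incidence relation: for any $h \in G_\alpha$, an edge $e$ has a vertex $v$ as endpoint if and only if $h \cdot e$ has $h \cdot v$ as endpoint.

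First I would prove the description of edges with endpoint $g(\infty)$. An edge $e$ in $\Gamma_\alpha$ has $g(\infty)$ as an endpoint if and only if $g^{-1} \cdot e$ has $\infty$ as an endpoint. By Lemma \ref{vertexadjacent}, the latter set of edges is $\{A^k \cdot \ell : k \in \ZZ\}$, so multiplying by $g$ yields the stated family $\{g A^k \cdot \ell : k \in \ZZ\}$. The same argument applied to the second half of Lemma \ref{vertexadjacent} yields the description at $g(0)$.

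For the second statement, the edges adjacent to $g \cdot \ell$ are precisely the edges sharing a vertex with $g \cdot \ell$ and distinct from $g \cdot \ell$ itself. Since the vertices of $g \cdot \ell$ are $g(\infty)$ and $g(0)$, the first part already identifies all edges sharing a vertex with $g \cdot \ell$. Excluding the edge $g \cdot \ell$ itself amounts to dropping $k = 0$ in both families, producing the stated union. The only point requiring a separate check is that the two sets $\{g A^k \cdot \ell : k \in \ZZ - \{0\}\}$ and $\{g B_\alpha^k \cdot \ell : k \in \ZZ - \{0\}\}$ are disjoint; for this, an equality $g A^k \cdot \ell = g B_\alpha^j \cdot \ell$ would force $B_\alpha^{-j} A^k \in \Stab_{G_\alpha}(\ell)$, which is trivial by Lemma \ref{freestabilizer}. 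Then $A^k = B_\alpha^j$ in $G_\alpha$, and freeness of $G_\alpha$ on $\{A, B_\alpha\}$ forces $k = j = 0$, a contradiction.

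No serious obstacle arises here: the corollary is essentially a $G_\alpha$-translate of Lemma \ref{vertexadjacent}, with the short disjointness argument via Lemma \ref{freestabilizer} as the only additional ingredient.
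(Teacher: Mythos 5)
Your proof is correct and follows the same route as the paper, which simply derives the corollary by translating Lemma \ref{vertexadjacent} by $g$ using the incidence-preserving $G_\alpha$-action. The extra disjointness check via Lemma \ref{freestabilizer} is sound but not actually required, since the statement only asserts the adjacent edges form the given union.
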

\begin{proof}
It follows immediately from Lemma \ref{vertexadjacent}.
\end{proof}

The following lemma enables us to compare the length of a word in $G_\alpha$ with the length of the corresponding path.
\begin{lem} \label{pathrule}
Let $\alpha$ be a positive real number and $\{D_i\}_{i=1}^k$ be a finite sequence in $G_\alpha$ for some $k\in \NN$ with $k>1$. Assume that $\alpha$ is a free number and the sequence $$(D_1 \cdot \ell) , (D_2 \cdot \ell) , \cdots , (D_k \cdot \ell)$$ is a path without backtracking in $\Gamma_{\alpha}$. 
Then $$ D_i ^{-1} D_{i+1} \in \left \{ A^p , B_{\alpha} ^q : p,q \in \ZZ-\{ 0 \} \right \}$$ for all $i\in\NN$ with $1 \leq i \leq k-1.$
Moreover, for each $i\in \NN$ with $1 \leq i \leq k-2$, if $ D_i ^{-1} D_{i+1} = A^{p_0}$ for some nonzero integer $p_0$, then $D_{i+1} ^{-1} D_{i+2} = B_{\alpha}^{q_0}$ for some nonzero integer $q_0$, and vice versa.
\end{lem}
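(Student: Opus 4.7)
The plan is to translate path-level equalities of edges into group-level equalities using trivial edge stabilizers (Lemma~\ref{freestabilizer}), and then read off the alternation structure from the combinatorics supplied by Corollary~\ref{edgeadjacent}.

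\emph{First claim.} Fix $i$ with $1\leq i\leq k-1$. By no backtracking, $D_{i+1}\cdot\ell\neq D_i\cdot\ell$, yet the two edges share a vertex. Corollary~\ref{edgeadjacent} lists every edge adjacent to $D_i\cdot\ell$ as either $D_iA^p\cdot\ell$ (sharing $D_i(\infty)$) or $D_iB_\alpha^q\cdot\ell$ (sharing $D_i(0)$), with $p,q\in\ZZ-\{0\}$. Hence $D_{i+1}\cdot\ell=D_iX\cdot\ell$ for some $X\in\{A^p,B_\alpha^q\}$ with nonzero exponent. Then $(D_iX)^{-1}D_{i+1}\in\Stab_{G_\alpha}(\ell)$, which is trivial by Lemma~\ref{freestabilizer}, so $D_i^{-1}D_{i+1}=X$.

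\emph{Alternation.} Now suppose $D_i^{-1}D_{i+1}=A^{p_0}$ for some nonzero $p_0$. Then $D_{i+1}(\infty)=D_iA^{p_0}(\infty)=D_i(\infty)$, and since $\Gamma_\alpha$ is simple, this is the unique shared vertex of $D_i\cdot\ell$ and $D_{i+1}\cdot\ell$. Because the path is realized as a graph morphism $f:P_n\to\Gamma_\alpha$, the shared vertex of the $i$-th and $(i{+}1)$-st edges is $f(i)$, while that of the $(i{+}1)$-st and $(i{+}2)$-nd edges is $f(i+1)$; these are the two distinct endpoints of the edge $D_{i+1}\cdot\ell$. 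Therefore $D_{i+2}\cdot\ell$ meets $D_{i+1}\cdot\ell$ at the other endpoint $D_{i+1}(0)$, so Corollary~\ref{edgeadjacent} combined with no backtracking forces $D_{i+2}\cdot\ell=D_{i+1}B_\alpha^{q_0}\cdot\ell$ for some nonzero integer $q_0$. Applying Lemma~\ref{freestabilizer} to the edge $\ell$ once more upgrades this to $D_{i+1}^{-1}D_{i+2}=B_\alpha^{q_0}$. The converse case, starting with $D_i^{-1}D_{i+1}=B_\alpha^{q_0}$, is entirely symmetric, interchanging the roles of $A$ and $B_\alpha$ and of $\infty$ and $0$.

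\emph{Main obstacle.} The only real subtlety is the path-structure observation $f(i)\neq f(i+1)$, which prevents $D_{i+2}\cdot\ell$ from attaching to $D_{i+1}\cdot\ell$ at the same vertex as $D_i\cdot\ell$ does; this is what forces alternation between $A$-type and $B_\alpha$-type moves rather than merely adjacency. Everything else reduces to two clean applications of Corollary~\ref{edgeadjacent} (to classify the adjacent edges) and Lemma~\ref{freestabilizer} (to lift edge equalities to group equalities).
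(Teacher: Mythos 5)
Your proof is correct and follows essentially the same route as the paper: Corollary~\ref{edgeadjacent} classifies the adjacent edges for the first claim, and the alternation is forced by the fact that consecutive shared vertices $f(i)$, $f(i+1)$ are the two distinct endpoints of the middle edge (the paper phrases this contrapositively, observing that $g\cdot\ell,\ gA^n\cdot\ell,\ gA^{n+m}\cdot\ell$ is never a path). Your explicit appeal to Lemma~\ref{freestabilizer} to upgrade the edge equality $D_{i+1}\cdot\ell=D_iX\cdot\ell$ to the group equality $D_i^{-1}D_{i+1}=X$ is a step the paper leaves implicit, and it is a worthwhile addition.
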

\begin{proof}
The first part follows directly from Corollary \ref{edgeadjacent}. Then observe that for each two integers $n$ and $m$ in $\ZZ-\{0\},$  two finite sequences $$\ell, (A^n \cdot \ell ), (A^{n+m} \cdot \ell)$$ and  $$\ell, (B^n \cdot \ell ), (B^{n+m} \cdot \ell)$$ are not paths.
Hence   for each two integers $n$ and $m$ in $\ZZ-\{0\}$ and for each $g\in G_\alpha,$ two finite sequences $$g\cdot \ell, (g A^n \cdot \ell ), (gA^{n+m} \cdot \ell)$$ and  $$g\cdot \ell, (gB^n \cdot \ell ), (gB^{n+m} \cdot \ell)$$ are not paths. The second statement follows from this observation. 
\end{proof}

\begin{cor} \label{pathlengthtowordlength}
Let $\alpha$ be a positive real number and $\{g_i\}_{i=1}^k$ be a finite sequence in $G_\alpha$ for some $k\in \NN$ with $k>1.$ 
Suppose that $\alpha$ is a free number. Assume that the finite sequence $\{g_i \cdot \ell \}_{i=1}^k$ is a path of length $k$ beginning at $\ell$, that is $g_1=\id_{G_\alpha}.$ Then the length of the word $g_k$ in $G_{\alpha}$ is $k-1$.
\end{cor}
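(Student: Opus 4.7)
The plan is to iterate Lemma \ref{pathrule}, read off an explicit alternating word for $g_k$, and invoke freeness to conclude that this expression is genuinely a reduced lifting word of the required length.

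First I would set $X_i := g_i^{-1} g_{i+1}$ for each $1 \le i \le k-1$. By the first part of Lemma \ref{pathrule}, each $X_i$ is either $A^{p_i}$ for some nonzero integer $p_i$ or $B_\alpha^{q_i}$ for some nonzero integer $q_i$. By the second part of the same lemma, the types alternate: consecutive $X_i$'s cannot both be powers of $A$ nor both powers of $B_\alpha$. Telescoping and using $g_1 = \mathrm{id}_{G_\alpha}$, we get
$$
g_k = g_1 \cdot X_1 \cdot X_2 \cdots X_{k-1} = X_1 X_2 \cdots X_{k-1}.
$$

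Next I would translate this back into $F_2(x_1,x_2)$ via $q_\alpha$. Because the $X_i$ alternate between nonzero powers of $A$ and nonzero powers of $B_\alpha$, the corresponding word $w := x_{\sigma_1}^{r_1} x_{\sigma_2}^{r_2} \cdots x_{\sigma_{k-1}}^{r_{k-1}}$ in $F_2(x_1,x_2)$ is reduced: consecutive indices $\sigma_i, \sigma_{i+1}$ differ (so no adjacent powers of the same generator occur), and every exponent $r_i$ is nonzero. By construction $q_\alpha(w) = g_k$, so $w$ is a lifting word of $g_k$ of length $k-1$.

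Finally, since $\alpha$ is a free number, $q_\alpha$ is an isomorphism, so every element of $G_\alpha$ has a \emph{unique} reduced lifting word, and the length of $g_k$ in $G_\alpha$ (as defined via its lifting word) is well-defined and equals the length of $w$, namely $k-1$. There is no real obstacle here; the corollary is essentially a direct bookkeeping consequence of Lemma \ref{pathrule} combined with the freeness assumption used to guarantee uniqueness of the lift.
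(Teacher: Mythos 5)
Your proposal is correct and is exactly the argument the paper intends: the paper's own proof of this corollary is a one-line citation of Lemma \ref{pathrule}, and your telescoping decomposition $g_k = X_1\cdots X_{k-1}$ into alternating nonzero powers of $A$ and $B_\alpha$, combined with uniqueness of reduced lifting words under freeness, is the natural way to fill in the omitted bookkeeping. No gaps.
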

\begin{proof}
It is obtained from Lemma \ref{pathrule}.
\end{proof}

 Now let us prove the main theorem of this section.
 
\begin{thm} \label{thethm}
Let $\alpha$ be a positive real number. Then $\alpha$ is a relation number if and only if the generalized Farey graph $\Gamma_{\alpha}$ at $\alpha$ is not tree.
\end{thm}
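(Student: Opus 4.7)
The forward direction is already handled: Proposition \ref{relationtonottree} shows that if $\alpha$ is a relation number, then $\Gamma_\alpha$ is not a tree. So my plan is to establish the contrapositive of the reverse direction, namely to show that if $\alpha$ is a free number, then $\Gamma_\alpha$ is a tree.

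Since the graph is connected (shown earlier in this section), by Lemma \ref{DefTree} it suffices to prove that $\Gamma_\alpha$ contains no cycle. I will argue by contradiction: assume $\Gamma_\alpha$ contains a non-backtracking closed walk of edges. Since the $G_\alpha$-action is transitive on edges, I can translate such a walk by an appropriate element of $G_\alpha$ so that its first edge is $\ell$. Thus I obtain a sequence $g_1 \cdot \ell,\, g_2 \cdot \ell,\, \ldots,\, g_k \cdot \ell$ of edges forming a non-backtracking path with $g_1 = I$ and $g_k \cdot \ell = \ell$, with $k \geq 3$ (a non-backtracking closed walk in a simple graph has at least three edges).

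Next, I apply Lemma \ref{pathrule}: each transition element $g_i^{-1} g_{i+1}$ must be a nonzero power of either $A$ or $B_\alpha$, and the type (power of $A$ vs.\ power of $B_\alpha$) must alternate along the walk. Chaining these transitions, I can write
\[
g_k \;=\; C_1^{p_1} C_2^{p_2} \cdots C_{k-1}^{p_{k-1}},
\]
where each $C_i \in \{A, B_\alpha\}$, the $C_i$ alternate, and every exponent $p_i$ is nonzero; in other words, $g_k$ has a lifting word that is a nontrivial reduced word of length $k-1 \geq 2$ in $F_2(x_1,x_2)$. On the other hand, the closure condition $g_k \cdot \ell = \ell$ puts $g_k \in \Stab_{G_\alpha}(\ell)$, and Lemma \ref{freestabilizer} tells me that this stabilizer is trivial, so $g_k = I$. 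This yields a nontrivial reduced word representing the identity, contradicting the assumption that $\alpha$ is a free number.

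Essentially everything needed for this argument is already packaged into the preceding lemmas, so there is no substantial new calculation; the one subtlety to be careful about is the opening reduction step, where I translate an arbitrary cycle into one whose first edge is $\ell$. I need to verify that translating a non-backtracking closed walk by a group element preserves both the non-backtracking property and closedness, which is immediate from the fact that $G_\alpha$ acts on $\Gamma_\alpha$ by graph automorphisms. The remaining task is just to phrase the length bound $k \geq 3$ precisely: a closed non-backtracking walk of length $1$ is impossible (an edge cannot equal itself reversed in this simple-graph setting), and length $2$ would force backtracking, so any genuine cycle has $k \geq 3$, which is sufficient to invoke Lemma \ref{freestabilizer} as above.
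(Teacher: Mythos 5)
Your proposal is correct and follows essentially the same route as the paper: the forward direction is delegated to Proposition \ref{relationtonottree}, and the converse translates a cycle so that it starts at $\ell$ and converts the non-backtracking edge path into an alternating reduced word via Lemma \ref{pathrule}. The only (cosmetic) difference is how the contradiction is closed at the end: the paper compares two computations of the word length of the penultimate element $D_{k-1}$ (Corollary \ref{pathlengthtowordlength} versus Lemma \ref{vertexadjacent}), while you go one edge further and invoke the triviality of $\Stab_{G_\alpha}(\ell)$ from Lemma \ref{freestabilizer} to exhibit a nontrivial reduced word for the identity; both are valid and rest on the same lemmas.
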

\begin{proof}
One direction has been shown by Proposition \ref{relationtonottree}. It thus suffices to show that  if $\Gamma_{\alpha}$ is not tree, then $\alpha$ is a relation number. 
We claim that if $G_{\alpha}$ is a free group of rank $2$, then the graph $\Gamma_{\alpha}$ is tree. 
Suppose that $G_{\alpha}$ is a free group of rank $2$ and $\Gamma_{\alpha}$ is not tree.
Since $\Gamma_{\alpha}$ is not tree, there is a cycle and we may assume that the cycle contains $\ell.$ 
Hence there is a finite sequence $\{D_i\}_{i=0}^k$ in $G_\alpha$ for some $k\in \NN$ with $ k \geq 3$ such that $D_0=D_k=\id_{G_\alpha}$ and the sequence  $\{ D_i \cdot \ell \}_{i=0}^k$ is a cycle. Now we consider the path $$ \ell = D_0 \cdot \ell , \cdots , D_{k-1} \cdot \ell . $$ 
Then the length of this path is $k$ and  by Corollary \ref{pathlengthtowordlength}, the length of $D_{k-1}$ is $k-1$. 
On the other hand, since the edge $D_{k-1} \cdot \ell$ is adjacent to $ D_k \cdot \ell = \ell$, the length of the word $D_{k-1}$ is 1 by Lemma \ref{vertexadjacent}. Then since $G_{\alpha}$ is a free group of rank $2$, we have $k-1=1$. 
However, $k \geq 3$ so this is impossible. Therefore, the theorem is proved.
\end{proof}


\section{Applications : The Orbit Test for Relation Numbers} \label{orbittestforrelaion}
In this section, we prove Proposition \ref{OrbitTest}, called the orbit test. 
Under the condition of Proposition \ref{OrbitTest}, we can find a cycle in the generalized Farey graph $\Gamma_\alpha$ and  by Theorem \ref{thethm}, we deduce that $\Gamma_\alpha$ is not tree. 
Therefore, we give some new examples of relation numbers which we find using Proposition \ref{OrbitTest}.

We first prove some lemmas. Let $\alpha$ be a positive real number. Then the first lemma says that the graph $\Gamma_{\alpha}$ is symmetric with respect to the geodesic $\ell$. 
To observe this, we define a map $h : G_\alpha \to G_\alpha$ as follows. For each element $M\in G_\alpha,$ we can write 
$$M=\begin{bmatrix}a&b\\ c&d\end{bmatrix}$$
and define $$h(M)= \begin{bmatrix}a&-b\\ -c&d\end{bmatrix}.$$ Then we can see that the map $h$ is an isomorphism on $G_\alpha$ and that  $h(A)=A^{-1}$ and $h(B_\alpha)=B_{\alpha} ^{-1}.$ 
Hence an automorphism $\tilde{h}$ of $F(x_1,x_2)$ defined by $x_1\mapsto x_1^{-1}$ and $x_2\mapsto x_2^{-1}$ makes the following diagram commute. 
\begin{center}
\begin{tikzcd}
F_2(x_1, x_2)\arrow[r, "\tilde{h}"]\arrow[swap, d, "q_\alpha"]& F_2(x_1,x_2)\arrow[d, "q_\alpha"]\\
G_\alpha \arrow[swap]{r}{h} &G_\alpha
\end{tikzcd}
\end{center}
The following lemma follows from the definition of the map $h.$

\begin{lem} \label{GFGsymmetric}
Let $\alpha$ be a positive real number and  $X$ be an element in $G_{\alpha}.$ If $X(0) \in \RR$, then $h(X)( 0)=-X( 0)$. Similarly, if $X (\infty) \in \RR$, then $h(X) (\infty) = -X ( \infty).$
\end{lem}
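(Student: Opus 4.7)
The plan is to prove this lemma by direct computation using the explicit formula for the Möbius action on $\RR \cup \{\infty\}$. Write a general element $X \in G_\alpha \subset \SLR$ as
$$X = \begin{bmatrix} a & b \\ c & d \end{bmatrix},$$
so that the action on the Riemann sphere is $X(z) = (az+b)/(cz+d)$. In particular $X(0) = b/d$ and $X(\infty) = a/c$, with the usual convention that these equal $\infty$ precisely when the denominator vanishes.

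For the first assertion, the hypothesis $X(0) \in \RR$ means $d \neq 0$, so $X(0) = b/d$ is a genuine real number. By the definition of $h$ I then have
$$h(X)(0) = \frac{-b}{d} = -\frac{b}{d} = -X(0),$$
which is the desired identity. For the second assertion, $X(\infty) \in \RR$ means $c \neq 0$, so $X(\infty) = a/c$ is a real number, and
$$h(X)(\infty) = \frac{a}{-c} = -\frac{a}{c} = -X(\infty).$$

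That is the entire argument; there is no real obstacle. The content of the lemma is essentially bookkeeping, confirming that the algebraically defined involution $h$ on matrices corresponds, on the boundary values $0$ and $\infty$, to the geometric reflection $x \mapsto -x$ of $\RR$ that fixes the geodesic $\ell = \{0, \infty\}$ pointwise. The only subtlety worth flagging is the need to invoke the hypotheses $X(0) \in \RR$ and $X(\infty) \in \RR$ to guarantee that the respective denominators $d$ and $c$ are nonzero, so that the equalities above take place in $\RR$ rather than involving $\infty$.
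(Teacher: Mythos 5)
Your computation is correct and is exactly the argument the paper has in mind: the paper gives no explicit proof, simply noting that the lemma "follows from the definition of the map $h$," and your verification that $h(X)(0)=-b/d$ and $h(X)(\infty)=a/(-c)$ spells out precisely that observation. The remark about needing $d\neq 0$ (resp.\ $c\neq 0$) to stay in $\RR$ is a reasonable bit of care but adds nothing beyond what the paper tacitly assumes.
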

Lemma \ref{GFGsymmetric} tells us that the generalized Farey graph $\Gamma_{\alpha}$ is symmetric with respect to $\ell$.

\begin{lem} \label{1/2}
Let $\alpha$ be a positive real number and $x$ be an element in $F(x_1,x_2).$ Assume that $q_\alpha(x)(0)=1/2$ (or $q_\alpha(x) ( \infty) = 1/2).$ 
Then an element $y=x_1\tilde{h}(x)$ in $F_2(x_1,x_2)$ satisfies that $y\neq x$ and  $q_\alpha(y)( 0 )= 1/2$ (or $q_\alpha(y)(\infty) = 1/2$, respectively).
\end{lem}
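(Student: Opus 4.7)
The plan is to split the verification into two parts: first, showing that $q_\alpha(y)$ sends $0$ (respectively $\infty$) to $1/2$; and second, showing that $y$ and $x$ are distinct as elements of $F_2(x_1,x_2)$.

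For the first part, I will use the commutative diagram relating $\tilde{h}$ and $h$ through $q_\alpha$, which gives $q_\alpha(\tilde{h}(x)) = h(q_\alpha(x))$. Writing $X = q_\alpha(x)$, the hypothesis $X(0) = 1/2 \in \RR$ allows us to invoke Lemma \ref{GFGsymmetric}, yielding $h(X)(0) = -1/2$. Since $y = x_1 \tilde{h}(x)$, we have $q_\alpha(y) = A \cdot h(X)$, and because the M\"obius action of $A = \begin{bmatrix}1 & 1 \\ 0 & 1\end{bmatrix}$ on $\RR \cup \{\infty\}$ is the shift $z \mapsto z+1$, we conclude $q_\alpha(y)(0) = A(-1/2) = 1/2$. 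The argument for the $\infty$ case is identical, using the second assertion of Lemma \ref{GFGsymmetric}.

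For the second part, I will use the abelianization homomorphism $\mathrm{ab}\colon F_2(x_1,x_2) \to \ZZ^2$ sending $x_i$ to the $i$-th standard basis vector. If $\mathrm{ab}(x) = (m,n)$, then since $\tilde{h}(x_i) = x_i^{-1}$, we have $\mathrm{ab}(\tilde{h}(x)) = (-m,-n)$, and hence $\mathrm{ab}(y) = (1-m,-n)$. If $y$ and $x$ represented the same element of $F_2(x_1,x_2)$, their abelianizations would agree, forcing $1-m = m$; but this has no integer solution, so $y \neq x$.

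I do not expect any real obstacle: the lemma is essentially bookkeeping that packages the reflection symmetry of Lemma \ref{GFGsymmetric} together with the translation by $A$. The one point to be attentive to is that the hypothesis $X(0)\in \RR$ (or $X(\infty)\in \RR$) needed to invoke Lemma \ref{GFGsymmetric} holds automatically, since the value in question is $1/2$.
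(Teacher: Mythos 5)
Your proof is correct and follows essentially the same route as the paper: the computation $q_\alpha(y)(0)=A\,h(q_\alpha(x))(0)=A(-1/2)=1/2$ via Lemma \ref{GFGsymmetric}, and a distinctness argument via the exponent sum of $x_1$ (the paper uses the homomorphism $e\colon F_2\to\ZZ$, $x_1\mapsto 1$, $x_2\mapsto 0$, applied to $x^{-1}y$, which is just the first coordinate of your abelianization and gives the same parity obstruction $1-2m\neq 0$). No gaps.
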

\begin{proof}
First we consider the case where $q_\alpha(x) (0) = 1/2$. By Lemma \ref{GFGsymmetric}, we have that $h(q_\alpha(x))( 0) = - 1/2$. 
Therefore $$q_\alpha(y)(0)=q_\alpha(x_1\tilde{h}(x))(0)=q_\alpha(x_1)q_\alpha(\tilde{h}(x))(0)= A h(q_\alpha(x))( 0)= A ( - 1/2)=1/2.$$ 

Then we show that $y\neq x.$ Consider  a homomorphism $e:F_2(x_1,x_2)\to \ZZ$ by $x_1 \mapsto 1$ and $x_2 \mapsto 0.$ Since $e(x^{-1}y)=e(x^{-1}x_1\tilde{h}(x)) \neq 0,$ then $x^{-1}y\neq 1.$ 
Likewise we can show the case where  $q_\alpha(x) ( \infty) = 1/2.$ 
\end{proof}

The last ingredient is quite combinatorial. This lemma follows from Lemma \ref{DefTree} so we just state the lemma without proof.

\begin{lem} \label{TreeLem}
 Let $X$ be a connected graph. Choose a vertex $v$ in $X$. Suppose that there exist two distinct edges $e_1$ and  $e_2$ having $v$ as endpoints. Suppose that  two paths $P_1$ and  $P_2$ in $X$ are given with the following properties:
\begin{itemize}
 	\item the first edges of $P_1$ and $P_2$ are $e_1$ and $e_2$, respectively;
	\item two paths $P_1$ and $P_2$ have no backtrackings;
	\item two paths $P_1$ and $P_2$ have  the same starting points and the same terminal points. 
	
\end{itemize}
Then $X$ is not tree.
\end{lem}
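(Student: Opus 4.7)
The plan is to derive the conclusion directly from the contrapositive of Lemma \ref{DefTree}, which asserts that if a connected graph admits two distinct non-backtracking paths sharing both their starting and terminal vertices, then it cannot be a tree. Thus the whole task reduces to checking that $P_1$ and $P_2$ constitute such a pair.

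First I would identify the common starting point $s$ of $P_1$ and $P_2$ with the vertex $v$. By definition of the first edge of a path, $s$ is an endpoint of both $e_1$ and $e_2$; by hypothesis, $v$ is also an endpoint of both. If $s \ne v$, then (since our graphs are simple, so edges are two-element subsets of the vertex set) both $e_1$ and $e_2$ would be forced to equal the set $\{s,v\}$, contradicting $e_1 \ne e_2$. Hence $s = v$, and by hypothesis $P_1$ and $P_2$ also share a common terminal vertex $t$.

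Next I would observe that $P_1 \ne P_2$ as paths, because their first edges differ. Combined with the non-backtracking assumption, this produces two distinct non-backtracking paths from $v$ to $t$, and Lemma \ref{DefTree} then forces $X$ not to be a tree.

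I do not anticipate any serious obstacle. The only mildly subtle point is the identification $s = v$, which relies essentially on the simplicity of $X$; were the graph allowed to have multiple edges between the same pair of vertices, the argument would need to be rephrased. Under the simple-graph convention fixed in Section \ref{Preliminaries}, however, this is handled cleanly and the rest is an immediate invocation of Lemma \ref{DefTree}.
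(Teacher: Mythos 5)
Your proposal is correct and follows the same route the paper intends: the authors state that Lemma \ref{TreeLem} "follows from Lemma \ref{DefTree}" and omit the details, which are exactly the ones you supply (the paths are distinct because their first edges $e_1\neq e_2$ differ, so uniqueness of non-backtracking paths fails and $X$ cannot be a tree). Your extra check that the common starting point equals $v$ is a harmless refinement that is not even needed for the application of Lemma \ref{DefTree}.
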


 Now we give a proof of the orbit test.

\begin{prop}[The orbit test] \label{OrbitTest}
 Let $\alpha$ be a real number. Suppose that the $G_{\alpha}$- orbit of $0$ or $\infty$ contains the number $1/2$, that is, for some $X \in G_{\alpha}$, $X(0) = 1/2$ or $X (\infty) = 1/2$. Then $\alpha$ is a relation number.
\end{prop}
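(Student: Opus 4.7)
The plan is to argue by contradiction: assume $\alpha$ is a free number, and derive a contradiction from either branch of the hypothesis. By assumption, some $X = q_\alpha(x) \in G_\alpha$ satisfies $X(0) = 1/2$ or $X(\infty) = 1/2$, where $x$ is a reduced word in $F_2(x_1,x_2)$. Setting $y := x_1\tilde{h}(x)$, Lemma \ref{1/2} provides $y \neq x$ in $F_2$ and $Y := q_\alpha(y)$ satisfying the same orbit condition as $X$. Because $q_\alpha$ is an isomorphism under freeness, $X \neq Y$ in $G_\alpha$, and $X^{-1}Y$ is therefore a nontrivial element of $\Stab_{G_\alpha}(0) = \langle B_\alpha\rangle$ or $\Stab_{G_\alpha}(\infty) = \langle A\rangle$, by Corollary \ref{pointstabilizer}.

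In the case $X(0) = 1/2$, write $X^{-1}Y = B_\alpha^n$ with $n \in \ZZ\setminus\{0\}$; lifting through the isomorphism $q_\alpha$ gives the identity $y = x x_2^n$ of reduced words in $F_2$. I would then apply the exponent-sum homomorphism $e\colon F_2 \to \ZZ$ sending $x_1 \mapsto 1$ and $x_2 \mapsto 0$; since $\tilde{h}$ inverts both generators, $e\circ\tilde{h} = -e$. Evaluating $e$ on the two expressions for $y$ yields $1 - e(x) = e(y) = e(x)$, forcing $e(x) = 1/2$, which contradicts $e(x) \in \ZZ$.

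In the case $X(\infty) = 1/2$, the analogous deduction produces $y = x x_1^n$ with $n \neq 0$. The abelianization $e$ does not conclude here, so I would instead compare the leading syllables of the reduced words $x_1\tilde{h}(x)$ and $x x_1^n$. If $x$ begins with $x_2^{b_1}$, then the two sides begin with different generators; if $x$ begins with $x_1^1$, the cancellation $x_1 \cdot x_1^{-1}$ leaves the left side beginning with $x_2$; and if $x$ begins with $x_1^{a_1}$ for $a_1 \neq 1$, matching leading syllables $x_1^{1-a_1}$ and $x_1^{a_1}$ requires $a_1 = 1/2$, impossible for $a_1 \in \ZZ$. The single-syllable degeneracies are excluded a priori: $x = x_1^{a_1}$ would give $X = A^{a_1}$ fixing $\infty$, while $x = x_2^{b_1}$ leaves the leading-generator mismatch intact.

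The main obstacle I expect is this second case, which is not resolved by the clean abelianization trick of the first case and instead demands a careful, case-by-case syllable comparison on the reduced word $x$. In both cases the freeness assumption is contradicted, so $\alpha$ must be a relation number.
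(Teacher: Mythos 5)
Your argument is correct, and it takes a genuinely different route from the paper's. Both proofs start identically, using Lemma \ref{1/2} to produce the second element $Y=q_\alpha(x_1\tilde{h}(x))$ with the same orbit value as $X$; but the paper then translates $x$ and $y$ into two backtracking-free paths in the generalized Farey graph $\Gamma_\alpha$ sharing their endpoints but not their first edges, and concludes via Lemma \ref{TreeLem} that $\Gamma_\alpha$ is not a tree, so that Theorem \ref{thethm} applies --- the proposition is framed there as an application of the graph criterion. You instead stay inside the group: under the freeness hypothesis $X^{-1}Y$ lies in $\Stab_{G_\alpha}(0)$ or $\Stab_{G_\alpha}(\infty)$, Corollary \ref{pointstabilizer} identifies these as $\langle B_\alpha\rangle$ and $\langle A\rangle$, and the resulting identities $x_1\tilde{h}(x)=xx_2^n$ and $x_1\tilde{h}(x)=xx_1^n$ in $F_2$ are refuted by the exponent-sum homomorphism and by comparison of leading syllables of normal forms, respectively. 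Both branches check out, including your exclusion of the single-syllable degeneracies; your three-way split on the first letter of $x$ in the $X(\infty)=1/2$ branch is the algebraic shadow of the paper's three-way case analysis used to build the paths $R_x$ and $R_y$. What your route buys is independence from the graph machinery --- you need only Lemma \ref{GFGsymmetric}, Lemma \ref{1/2} and Corollary \ref{pointstabilizer}, not Theorem \ref{thethm} --- at the cost of the uniform combinatorial packaging the paper gets by routing everything through $\Gamma_\alpha$. Two small points to add for completeness: the preliminary reduction to $\alpha>0$ and $\alpha\neq 0$ (using $G_\alpha=G_{-\alpha}$ and the fact that the orbit hypothesis fails in $G_0$), since both Lemma \ref{1/2} and Corollary \ref{pointstabilizer} are stated for positive $\alpha$; and a sentence noting that ``$y=xx_2^n$ as reduced words'' means equality in $F_2$ followed by passage to normal forms, since the concatenations need not be reduced as written.
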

\begin{proof}
Recall that there is no such  $X$ in $G_0.$ Hence we just consider the case where $\alpha$ is not $0.$  As mentioned previously, we may also assume that $\alpha$ is a positive real number. 
Then suppose that there is an element $X$ in $G_{\alpha}$ such that $X(0) = 1/2.$ Let $x$ be a lifting word of $X.$ Since $X(0) = 1/2$, the element $x$ is not the empty word. 
Hence we can write $x=x_{\sigma_1}^{p_1}\cdots x_{\sigma_i}^{p_i}$ for some $i\in \NN$ in $F_2(x_1,x_2).$ Then by Lemma  \ref{1/2}, the element $y=x_1\tilde{h}(x)$ differs with $x$ and $q_\alpha(y)(0)=1/2.$ 
Since $y$ is also not the empty word, we can write $y=x_{\delta_1}^{q_1}\cdots x_{\delta_j}^{q_j}$ for some $j\in \NN$ so that the word  $x_{\delta_1}^{q_1}\cdots x_{\delta_j}^{q_j}$ is the reduced word of $y$ in $F_2(x_1,x_2).$ 
Then we define two finite sequences $P_x$ and $P_y$ of edges in $\Gamma_\alpha$ as follows. The sequence $P_x$ is 
$$\ell, (q_\alpha(x_{\sigma_1}^{p_1})\cdot \ell), (q_\alpha(x_{\sigma_1}^{p_1} x_{\sigma_2}^{p_2})\cdot \ell), \cdots, (q_\alpha(x_{\sigma_1}^{p_1}x_{\sigma_2}^{p_2}\cdots x_{\sigma_i}^{p_i})\cdot \ell)=(X\cdot \ell)$$ 
and the sequence $P_y$ is  
$$\ell, (q_\alpha(x_{\delta_1}^{q_1})\cdot \ell), (q_\alpha(x_{\delta_1}^{q_1} x_{\delta_2}^{q_2})\cdot \ell), \cdots, (q_\alpha(x_{\delta_1}^{q_1}x_{\delta_2}^{q_2}\cdots x_{\delta_j}^{q_j})\cdot \ell)=(q_\alpha(y)\cdot \ell).$$ 
The  sequences $P_x$ and $P_y$ are paths without backtracking by Lemma \ref{wordtopath}. 

Now we claim that there are two paths in $\Gamma_\alpha$ satisfying the condition of \ref{TreeLem}. First we consider the case where $x_{\sigma_1}=x_1.$ Since $X(0) = 1/2$, the number $i$ is at least $2$ and the length of $P_x$ is at least $3.$ 
If $p_1=1$, then $x_{\delta_1}=x_2$ by the choice of $y$ and the length of $P_y$ is at least $2.$ In this case,  the starting point of $P_x$ is $0$ and the starting point of $P_y$ is $\infty.$ 
Hence we consider two paths $Q_x$ and $Q_y$ in $\Gamma_\alpha$ such that $Q_y$ is $P_y$ and $Q_x$ is 
$$ (q_\alpha(x_{\sigma_1}^{p_1})\cdot \ell), (q_\alpha(x_{\sigma_1}^{p_1} x_{\sigma_2}^{p_2})\cdot \ell), \cdots, (q_\alpha(x_{\sigma_1}^{p_1}x_{\sigma_2}^{p_2}\cdots x_{\sigma_i}^{p_i})\cdot \ell).$$
Then the paths $Q_x$ and $Q_y$ without backtracking have $\infty$ as the starting points and their lengths are at least $2.$ Moreover the first edges of $Q_x$ and $Q_y$ differ. 
Then since $X(0)=1/2$ and $q_\alpha(y)(0)=1/2,$ both $Q_x$ and $Q_y$ have $1/2$ as vertices. 
Hence we can take two paths $R_x$ and $R_y$ so that $R_x$ and $R_y$ are induced subgraphs of $Q_x$ and $Q_y$, respectively, both starting at $\infty$ and ending at $1/2.$ 
Two paths $R_x$ and $R_y$ satisfy the condition of Lemma \ref{TreeLem}. 

Then we consider the case where $p_1\neq 1.$ In this case, $x_{\sigma_1}=x_{\delta_2}=x_1$, $p_1\neq q_1$ and also the lengths of $P_x$ and $P_y$ are equal by the choice of $y.$ 
Hence the starting points of $P_x$ and $P_y$ are $0,$ and the first edges of $P_x$ and $P_y$ are the same whereas the second edges of $P_x$ and $P_y$ are different. Now we define two paths $Q_x$ and $Q_y$ as follows.
The path $Q_x$ is 
$$(q_\alpha(x_{\sigma_1}^{p_1})\cdot \ell), (q_\alpha(x_{\sigma_1}^{p_1} x_{\sigma_2}^{p_2})\cdot \ell), \cdots, (q_\alpha(x_{\sigma_1}^{p_1}x_{\sigma_2}^{p_2}\cdots x_{\sigma_i}^{p_i})\cdot \ell)$$ 
and the path $Q_y$ is 
$$ (q_\alpha(x_{\delta_1}^{q_1})\cdot \ell), (q_\alpha(x_{\delta_1}^{q_1} x_{\delta_2}^{q_2})\cdot \ell), \cdots, (q_\alpha(x_{\delta_1}^{q_1}x_{\delta_2}^{q_2}\cdots x_{\delta_j}^{q_j})\cdot \ell).$$ 
Then $Q_x$ and $Q_y$ are induced subgraphs of $P_x$ and $P_y$, respectively, and the first edges of $Q_x$ and $Q_y$ differ and the starting points of $Q_x$ and $Q_y$ are $\infty.$ 
Since $X(0)=1/2$ and $q_\alpha(y)(0)=1/2,$ both $Q_x$ and $Q_y$ have $1/2$ as vertices. Hence we can take two paths $R_x$ and $R_y$ so that $R_x$ and $R_y$ are induced subgraphs of $Q_x$ and $Q_y$, respectively, both starting at  $\infty$ and ending at  $1/2.$ 
The paths $R_x$ and $R_y$ satisfy the condition of Lemma \ref{TreeLem}. 

Finally we consider the case where $x_{\sigma_1}=x_2.$ Then the length of $P_x$ is at least $3,$  $x_{\delta_1}=x_1$ and the length of $P_y$ is at least $4.$ Hence the starting point of $P_x$ is $\infty$ and the starting point of $P_y$ is $0.$  Now we define two paths $Q_x$ and $Q_y$ as follows.
The path $Q_x$ is the finite sequence 
$$(q_\alpha(x_{\sigma_1}^{p_1})\cdot \ell), (q_\alpha(x_{\sigma_1}^{p_1} x_{\sigma_2}^{p_2})\cdot \ell), \cdots, (q_\alpha(x_{\sigma_1}^{p_1}x_{\sigma_2}^{p_2}\cdots x_{\sigma_i}^{p_i})\cdot \ell)$$ 
and the path $Q_y$ is $P_y.$ Then $Q_x$ and $Q_y$ are induced subgraphs of $P_x$ and $P_y$, respectively, the first edges of $Q_x$ and $Q_y$ differ and the starting points of $Q_x$ and $Q_y$ are $0.$ Since $X(0)=1/2$ and $q_\alpha(y)(0)=1/2,$ both $Q_x$ and $Q_y$ have $1/2$ as vertices. 
Hence we can take two paths $R_x$ and $R_y$ so that $R_x$ and $R_y$ are induced subgraphs of $Q_x$ and $Q_y$, respectively,  both starting at $0$ and ending at $1/2.$ 
The paths $R_x$ and $R_y$ satisfy the condition of Lemma \ref{TreeLem}. Thus the claim is proved and by Lemma \ref{TreeLem}, $\Gamma_\alpha$ is not tree. 
Likewise we can prove the case where there is an element $X$ in $G_{\alpha}$ such that $X(\infty) = 1/2.$
\end{proof}

\begin{rmk}
Suppose that there is an element $X$ in $G_\alpha$ such that $X(0) = (2n+1)/2$ for some integer $n \in \ZZ.$ Then $A^{-n}X(0) = 1/2$ so we have that $\alpha$ is a relation number. Hence Proposition \ref{OrbitTest} still holds if we replace $1/2$ by $(2n+1)/2$ for any $n \in \ZZ.$
\end{rmk}

\begin{exam}
 Before we give a list of new rational relation numbers, we explain our orbit test with a specific example. Let $\alpha = 2/3$. Then $B_{\alpha}(\infty) = 3/2$ so $A^{-1} B_{\alpha}(\infty)=1/2.$ Then it follows directly that $A B_{\alpha}^{-1}(\infty)=-1/2$ and $A^2 B_{\alpha}^{-1}(\infty)=1/2.$ Consider two paths $$P_1 = A^{-1} \cdot \ell , A^{-1} B_{\alpha} \cdot \ell \textnormal{ and } P_2 = A^2 \cdot \ell , A^2 B_{\alpha} ^{-1} \cdot \ell.$$ Two paths $P_1$ and $P_2$ are different paths from $\infty$ to $1/2$. So the graph $\Gamma_{2/3}$ is not tree. Hence the rational $2/3$ is a relation number.
\end{exam}

\begin{rmk}
One of the simplest example of relation number is  $1/n$ for nonzero integer $n$ because $B_{\alpha} ^{2n}( \infty)=1/2$. 
 Moreover, we can see that $2/n$ and $3/n$ are  relation numbers by checking $B_{\alpha} (\infty) = n/2$ and $B_{\alpha} ^n A^{-1}( 0) = 1/2$, respectively.
One of non-trivial and so far unknown relation numbers is $41/18$. 
Note that $$B_{\alpha} ^{-2} A B_{\alpha} ^{-1} A (0) = \frac{45}{2}.$$ This gives the non-freeness of $G_{41/18}$. The following table is the list of the new examples of relation numbers with denominators $ \leq 30.$

\begin{table}[h!]
\begin{center}
	\label{tab:table1}
\begin{tabular}{|c|c||c|c|}
	\hline
	\textbf{$\alpha$} & \textbf{Orbit value} & \textbf{$\alpha$} & \textbf{Orbit value}\\
	\hline
	$\frac{41}{18}$ & $ B^{-2}AB^{-1}A \cdot 0 = \frac{45}{2} $ &
	$\frac{33}{19}$ & $ B^{-3}AB^{-1}A^2 \cdot 0 = \frac{171}{2} $ \\
	\hline
	$\frac{31}{20}$ & $ B^{-2}A^2B^{-1}A^2B^{-1}A^2 \cdot 0 = \frac{105}{2} $ &
	$\frac{35}{22}$ & $ B^{-2}A^{-1}BA^{-2}BA^{-3} \cdot 0 = \frac{2893}{2} $ \\
	\hline
	$\frac{43}{24}$ & $ B^2 A^4 B^{-1} A B^{-1} A^{-1} \cdot 0 = \frac{51}{2} $ &
	$\frac{41}{25}$ & $ B^5 A^{-1} B A^{-2} \cdot 0 = \frac{35}{2}$ \\
	\hline
	$\frac{57}{25}$ & $ B^{-2} A B^{-1} A \cdot 0 = \frac{175}{2} $ &
	$\frac{33}{26}$ & $B A^{-1} B^4 A^{-3} \cdot 0 = \frac{949}{2}$ \\
	\hline
	$\frac{35}{26}$ & $ B A^{-2} B A^{-3} B A^{-3} \cdot 0 = \frac{1001}{2}$ &
	$\frac{59}{26}$ & $ B^{-1} A B^{-1} A^2 \cdot 0 = \frac{65}{2}$ \\
	\hline
	$\frac{35}{27}$ & $B^{-2}A^{-3}BA^{-1} \cdot 0 = \frac{27}{2}$ &
	$\frac{43}{27}$ & $B^{-2}A^2 B^{-1}A \cdot 0 =  \frac{135}{2}$ \\
	\hline
	$\frac{33}{28}$ & $B A^{-2} B A^{-4} \cdot 0 = \frac{21}{2}$ &
	$\frac{37}{30}$ & $B^{-1} A B^{-4} A^3 \cdot 0 = \frac{45}{2}$ \\
	\hline
   \end{tabular}
	\caption{New rational relation numbers ($\alpha$ is omitted.)}
\end{center}
\end{table}
\noindent Recall that the orbit test can apply when $\alpha$ is an irrational number. By using the orbit test, we can also prove that $\alpha=2+\sqrt{2}$ is a relation number since $B_{\alpha} A^{-1} B_{\alpha}( \infty) = 1/2.$
\end{rmk}

 We do not know whether the converse of the orbit test holds. We expect it to be  false, but we could not find any counterexample.


\section{Proof of the Main Theorem} \label{proofofmainthm}
In this section, we prove our main theorem, Theorem \ref{themainthm}.
Before proving some lemmas and the main theorem, we need to introduce some notations.
From now on, we regard $\alpha$ as a complex variable. Then  for each $n\in \NN,$ we write
 $$ \left( B_{\alpha} A^{-1} \right) ^n = \left( \begin{bmatrix}
1 & -1 \\ \alpha & 1 - \alpha
\end{bmatrix} \right) ^n = \begin{bmatrix}
t_n(\alpha) & u_n(\alpha) \\ m_n(\alpha) & l_n(\alpha)
\end{bmatrix}.$$  We get that for each $n\in \NN,$ $(B_{\alpha} A^{-1})^n (0) = u_n(\alpha) / l_n(\alpha).$ Moreover  for each $n\in \NN,$
$$ \begin{bmatrix}
u_{n+1} (\alpha) \\ l_{n+1}(\alpha)
\end{bmatrix}=\begin{bmatrix}
1 & -1 \\ \alpha & 1-\alpha
\end{bmatrix} \begin{bmatrix}
u_n(\alpha) \\ l_n (\alpha)
\end{bmatrix}$$ with $u_1(\alpha)=-1$ and $l_1(\alpha)=1-\alpha$. 
It is not difficult to show that for each $n\in \NN,$ the polynomial $u_n$ of the complex variable $\alpha$ is of degree $n-1$ and the polynomial $l_n$ is of degree $n.$
 Both $u_n$ and $l_n$ have integer coefficients and their leading coefficients are $(-1)^n$. 
Note that by Proposition \ref{OrbitTest}, a real number $\alpha_0$ is a relation number when $u_n(\alpha_0) / l_n(\alpha_0) = 1/2.$
In other words, for each $n\in \NN,$ we define a polynomial $$p_n(\alpha) = (-1)^{n+1} \left( 2 u_n(\alpha) - l_n(\alpha) \right),$$ and  then all real roots of $p_n(\alpha)$ are relation numbers. 
Note that $p_n(\alpha)$ is a monic polynomial with integer coefficients. 
We will show that the polynomials $p_n(\alpha)$ satisfy the conditions of the following theorem.

\begin{thm} \label{themainthm}
 There exists a sequence of polynomials $p_n(\alpha)$ satisfying the following:
\begin{itemize}
	\item each polynomial $p_n(\alpha)$ is a monic polynomial of degree $n$ with integer coefficients;
	\item all roots of $p_n(\alpha)$ are distinct real numbers and are relation numbers; and
	\item let $\alpha_n$ be the maximal root of $p_n(\alpha)$. Then the sequence $\{\alpha_n\}_{n=1}^\infty$ is increasing and converges to $4$.
\end{itemize}
\end{thm}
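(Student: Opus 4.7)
The plan is to find an explicit closed form for $p_n(\alpha)$ using Chebyshev polynomials and deduce all three assertions from it.

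\textbf{Step 1 (Chebyshev form).} I would compute $(B_\alpha A^{-1})^n$ via Cayley--Hamilton. Since $\det(B_\alpha A^{-1}) = 1$ and $\tr(B_\alpha A^{-1}) = 2-\alpha$,
$$(B_\alpha A^{-1})^n \;=\; U_{n-1}(1-\alpha/2)\,B_\alpha A^{-1} \;-\; U_{n-2}(1-\alpha/2)\,I,$$
where $U_k$ is the Chebyshev polynomial of the second kind (with $U_{-1}=0$, $U_0=1$). Reading off the second column gives $u_n(\alpha) = -U_{n-1}(1-\alpha/2)$ and $l_n(\alpha) = (1-\alpha)\,U_{n-1}(1-\alpha/2) - U_{n-2}(1-\alpha/2)$, from which
$$p_n(\alpha) \;=\; (-1)^{n+1}\bigl[\,(\alpha-3)\,U_{n-1}(1-\alpha/2) \;+\; U_{n-2}(1-\alpha/2)\,\bigr].$$
The monic-with-integer-coefficients claim is then immediate by tracking leading terms in the recursion for $U_k$.

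\textbf{Step 2 (Trigonometric reduction).} Substituting $\alpha = 2 - 2\cos\theta$ for $\theta\in(0,\pi)$ and using $U_k(\cos\theta) = \sin((k+1)\theta)/\sin\theta$, I would apply a sum-to-product identity to collapse the bracket to a single sine. The expected result is
$$p_n(\alpha) \;=\; \frac{(-1)^n\,\sin\bigl((2n+1)\theta/2\bigr)}{\sin(\theta/2)}, \qquad \alpha = 2 - 2\cos\theta.$$
This identifies the zeros of $p_n$ inside $(0,4)$ as exactly
$$\alpha_k \;:=\; 4\sin^{2}\!\left(\frac{k\pi}{2n+1}\right), \qquad k = 1,2,\ldots,n,$$
which are $n$ distinct real numbers. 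Since $\deg p_n = n$, these exhaust the roots. At each $\alpha_k$, by construction $(B_{\alpha_k}A^{-1})^n(0) = u_n(\alpha_k)/l_n(\alpha_k) = 1/2$, so the orbit test (Proposition~\ref{OrbitTest}) yields that each $\alpha_k$ is a relation number.

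\textbf{Step 3 (Monotonicity and limit).} The maximal root is $\alpha_n = 4\sin^{2}\!\bigl(n\pi/(2n+1)\bigr)$. Since $n/(2n+1)$ strictly increases to $1/2$ and $\sin$ is strictly increasing on $(0,\pi/2)$, the sequence $\{\alpha_n\}$ is strictly increasing and converges to $4\sin^{2}(\pi/2) = 4$.

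The main obstacle I expect is the bookkeeping in Step 2: getting the signs, the factor $(-1)^n$, and the half-angle indices to match so that $p_n$ collapses cleanly to a single sine quotient, and confirming that the parametrization $\alpha = 2 - 2\cos\theta$ really locates all real roots (which it does by the degree count, since Step 2 produces $n$ distinct roots already). As an alternative more in the spirit of the winding-number material from Section~\ref{Preliminaries}, one could instead show inductively, using Lemmas~\ref{rotationplusx} and~\ref{rotationinverse}, that the map $\alpha \mapsto u_n(\alpha)/l_n(\alpha)$ is clockwise (or anticlockwise) with extension having winding number $n$, and count preimages of $1/2$ via Lemma~\ref{winding-calculuation}; this would again give $n$ distinct real roots, but without the explicit values one would need a separate argument for the monotonicity and the limit $4$ in Step 3.
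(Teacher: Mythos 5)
Your proposal is correct, and it reaches the theorem by a genuinely different and far more explicit route than the paper. You construct the same polynomials $p_n(\alpha)=(-1)^{n+1}\left(2u_n(\alpha)-l_n(\alpha)\right)$ and invoke the same orbit test (Proposition \ref{OrbitTest}) to conclude that the roots are relation numbers; but where the paper proves that $c_n(\alpha)=u_n(\alpha)/l_n(\alpha)$ is a clockwise map whose circle extension is an $n$-fold covering (Lemmas \ref{rotationlemma} and \ref{windingnumber}), counts the $n$ preimages of $1/2$ to get distinct real roots (Proposition \ref{rootreal}), proves monotonicity of $\alpha_n$ by an intermediate-value argument on the covering (Proposition \ref{MaximalIncreasing}), and gets the limit $4$ from a rotation-number density argument (Lemma \ref{denselemma}), you instead exploit that $\tr(B_\alpha A^{-1})=2-\alpha$ and diagonalize via Cayley--Hamilton. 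Your identities check out: $u_n(\alpha)=-U_{n-1}(1-\alpha/2)$, $l_n(\alpha)=(1-\alpha)U_{n-1}(1-\alpha/2)-U_{n-2}(1-\alpha/2)$, and with $\alpha=2-2\cos\theta$ the sum-to-product step does collapse $p_n$ to $(-1)^n\sin\bigl((2n+1)\theta/2\bigr)/\sin(\theta/2)$ (this reproduces the paper's examples $\alpha_1=3$ and $\alpha_2=(5+\sqrt{5})/2$). The resulting closed form $4\sin^{2}\bigl(k\pi/(2n+1)\bigr)$, $k=1,\dots,n$, exhibits all $n$ roots at once, so distinctness, reality, the strict increase of $\alpha_n=4\sin^{2}\bigl(n\pi/(2n+1)\bigr)$, and the limit $4$ are one-line consequences; it even yields for free that every root lies in $(0,4)$, a fact the paper asserts without proof. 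What you give up is the soft covering-space framework, which does not require the word $(B_\alpha A^{-1})^n$ to be a power of a single matrix and so would adapt to other orbits and target values; what you gain is that the analytic apparatus of the (anti)clockwise and winding-number sections and Lemmas \ref{rotationlemma}--\ref{denselemma} becomes unnecessary. Two small points to make explicit in a final write-up: the identity $u_n(\alpha_k)/l_n(\alpha_k)=1/2$ presupposes $l_n(\alpha_k)\neq 0$, which is immediate because $u_n$ and $l_n$ cannot vanish simultaneously (the second column of a determinant-one matrix is nonzero; the paper proves this by induction), and the integrality of the coefficients follows from the recursion $V_{k+1}=(2-\alpha)V_k-V_{k-1}$ for $V_k=U_k(1-\alpha/2)$, as you indicate.
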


 We have shown the first two conditions except that all roots are real.
Thus the only remaining part is to prove that all roots are real and to verify the last condition.

\begin{lem} \label{rotationlemma}
Let $n$  be a natural number. The map $c_n:\RR \to \RR \cup \{ \infty \} = S^1$ defined by $\alpha \mapsto (B_{\alpha} A^{-1})^n (0)$ is a clockwise map.
\end{lem}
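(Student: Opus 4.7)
The plan is a straightforward induction on $n$, built on the recursion $(B_\alpha A^{-1})^{n+1}(0) = (B_\alpha A^{-1})\bigl((B_\alpha A^{-1})^n(0)\bigr)$ together with Lemmas \ref{rotationplusconstant}, \ref{rotationinverse} and \ref{rotationplusx}.

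For the base case $n = 1$ we have $c_1(\alpha) = u_1(\alpha)/l_1(\alpha) = 1/(\alpha - 1)$. At any $\alpha \neq 1$ the derivative $-1/(\alpha - 1)^2$ is negative, so $c_1$ is strictly decreasing there. At $\alpha = 1$, where $c_1(\alpha) = \infty$, we instead observe that $-1/c_1(\alpha) = 1 - \alpha$ is strictly decreasing. Hence $c_1$ is clockwise at every point of $\RR$.

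For the inductive step, computing the action of the M\"obius transformation associated with the matrix $B_\alpha A^{-1} = \bigl[\begin{smallmatrix} 1 & -1 \\ \alpha & 1-\alpha \end{smallmatrix}\bigr]$ on $c_n(\alpha)$ yields the key identity
$$ c_{n+1}(\alpha) \;=\; \frac{c_n(\alpha) - 1}{\alpha\bigl(c_n(\alpha)-1\bigr) + 1} \;=\; \frac{1}{\,\alpha \;+\; \dfrac{1}{\,c_n(\alpha) - 1\,}\,}. $$
This exhibits $c_{n+1}$ as the result of applying four operations in succession to $c_n$: subtract the constant $1$, take the reciprocal, add the identity function $\alpha \mapsto \alpha$, and take the reciprocal once more. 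Assuming inductively that $c_n$ is clockwise, Lemma \ref{rotationplusconstant} yields that $c_n - 1$ is clockwise; Lemma \ref{rotationinverse} then gives that $1/(c_n - 1)$ is anticlockwise; Lemma \ref{rotationplusx} promotes this to $\alpha + 1/(c_n - 1)$ being anticlockwise; and a final application of Lemma \ref{rotationinverse} yields that $c_{n+1}$ is clockwise, completing the induction.

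I do not expect a serious obstacle. The only delicate points are those values of $\alpha$ at which one of the intermediate expressions passes through $\infty$ (e.g.\ where $c_n(\alpha) = 1$, or where the outer reciprocal blows up), but the dichotomy \emph{$f$ or $-1/f$ strictly monotone} built into the definition of (anti)clockwiseness is tailored precisely to that situation, and the three cited lemmas already absorb the relevant case analysis. Consequently the induction runs through uniformly without any separate treatment at the poles.
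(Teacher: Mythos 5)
Your proof is correct and follows essentially the same route as the paper: the same base case computation for $c_1$, the same identity $c_{n+1}(\alpha)=1/\bigl(\alpha+\tfrac{1}{c_n(\alpha)-1}\bigr)$, and the same four-step application of Lemmas \ref{rotationplusconstant}, \ref{rotationinverse}, \ref{rotationplusx}, and \ref{rotationinverse} to close the induction. No issues.
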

\begin{proof} Recall that $(B_{\alpha} A^{-1})^n(0) = u_n(\alpha)/l_n(\alpha).$ 
 We use induction on $n$. First we consider the case where $n=1.$ Note that $$c_1(\alpha)=u_1(\alpha)/l_1(\alpha) = \frac{1}{\alpha-1}.$$
Then the map $c_1$ is strictly decreasing at every point in $\RR-\{1\}$. At $\alpha=1$, $$-\frac{1}{c_1(\alpha)}=1-\alpha$$ and this is strictly decreasing. Hence the map $c_1$ is a clockwise map. Now we  assume that $c_n$ is a clockwise map. The map $c_{n+1}$ can be expressed as 
\begin{align}
    \label{eqn:inductivestep}
    c_{n+1}(\alpha) = \frac{u_{n+1}(\alpha)}{l_{n+1}(\alpha)}=\frac{u_n(\alpha)-l_n(\alpha)}{\alpha u_n(\alpha)+ (1-\alpha)l_n(\alpha)} = \frac{1}{\alpha + \frac{l_n(\alpha)}{u_n(\alpha) - l_n(\alpha)}} = \frac{1}{\alpha + \frac{1}{\frac{u_n(\alpha)}{l_n(\alpha)}-1}}= \frac{1}{\alpha + \frac{1}{c_n(\alpha)-1}}.
  \end{align}

 By Lemma \ref{rotationplusconstant}, we see that the map $$c_n(\alpha)-1$$ is clockwise
and by Lemma \ref{rotationinverse}, the map $$\frac{1}{c_n(\alpha)-1}$$ is anticlockwise.
 Lemma \ref{rotationplusx} then shows that $$\frac{1}{c_n(\alpha)-1} + \alpha$$ is also anticlockwise. 
Finally,  by Lemma \ref{rotationinverse}, the map $$\frac{1}{\alpha+\frac{1}{c_n(\alpha)-1}}$$ is clockwise and thus the map $c_{n+1}$ is also a clockwise map. 
\end{proof}

 Observe in  the equation \eqref{eqn:inductivestep} that  for all $n\in \NN,$ $\displaystyle \lim_{\alpha \to -\infty} c_n(\alpha) = \lim_{\alpha \to \infty} c_n(\alpha) = 0.$  Then, as discussed in Section \ref{winding}, we can consider the extension map $\overline{c_n}$ of $c_n.$

\begin{lem} \label{windingnumber}
 For each $n\in \NN,$ $$w \left( \overline{c_n} \right)=n.$$
\end{lem}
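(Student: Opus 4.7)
I would proceed by induction on $n$, using the recursion derived in equation \eqref{eqn:inductivestep} together with Lemma \ref{winding-calculuation} to translate winding numbers into preimage counts.

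For the base case $n=1$, the map $c_1(\alpha) = 1/(\alpha-1)$ is clockwise with $\lim_{\alpha \to \pm \infty} c_1(\alpha) = 0$, so Lemma \ref{winding-calculuation} applies; the unique preimage of $\infty$ in $S^1$ is $\alpha = 1$, giving $w(\overline{c_1}) = 1$.

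For the inductive step, assume $w(\overline{c_n}) = n$. Set $g_n(\alpha) = 1/(c_n(\alpha)-1)$ and $h_n(\alpha) = \alpha + g_n(\alpha)$, so that the recursion \eqref{eqn:inductivestep} reads $c_{n+1} = 1/h_n$. By Lemma \ref{rotationplusconstant}, $c_n - 1$ is clockwise with limits $-1$ at $\pm \infty$; by Lemma \ref{rotationinverse} combined with Lemma \ref{windinginverse}, $g_n$ is anticlockwise with the same winding number $n$ and with $\lim_{\alpha \to \pm \infty} g_n(\alpha) = -1$. Lemma \ref{rotationplusx} then says $h_n$ is anticlockwise, with $\lim_{\alpha \to \pm \infty} h_n(\alpha) = \infty$ in $S^1$. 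Hence Lemma \ref{winding-calculuation} shows $\overline{h_n}$ is a covering of $S^1$ whose number of sheets equals $w(\overline{h_n})$.

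The crux is to count the preimages of $\infty$ under $\overline{h_n}$: they are the point $\alpha = \infty$, contributing one preimage; together with the finite $\alpha$ at which $g_n(\alpha) = \infty$, equivalently $c_n(\alpha) = 1$. By the inductive hypothesis, $\overline{c_n}$ is an $n$-fold covering, so $\overline{c_n}^{-1}(1)$ has exactly $n$ elements; since $c_n(\infty) = 0 \neq 1$, all of these elements lie in $\RR$. Therefore $w(\overline{h_n}) = n+1$, and Lemma \ref{windinginverse} applied to $c_{n+1} = 1/h_n$ yields $w(\overline{c_{n+1}}) = n+1$, closing the induction.

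The main obstacle is the bookkeeping: one must verify, at each step of the chain $c_n \to g_n \to h_n \to c_{n+1}$, that the limits at $\pm \infty$ exist and agree, so that Lemma \ref{winding-calculuation} may be invoked and each winding number realized as a preimage count. Once this is in place, the critical preimage count $w(\overline{h_n}) = n+1$ follows immediately from the inductive hypothesis applied to the fiber $\overline{c_n}^{-1}(1)$, and the induction proceeds cleanly.
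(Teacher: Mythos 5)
Your proposal is correct and follows essentially the same route as the paper: the paper also inducts on $n$, passes to $f=1/c_{n+1}=\alpha+1/(c_n(\alpha)-1)$ (your $h_n$), counts the $n+1$ preimages of $\infty$ (the $n$ real solutions of $\overline{c_n}(\alpha)=1$ plus $\alpha=\infty$), and concludes via Lemma \ref{winding-calculuation} and Lemma \ref{windinginverse}. Your version is slightly more explicit about checking the limit hypotheses at each stage, but the argument is the same.
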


\begin{proof}
 We use mathematical induction on $n$. First we consider the case where $n=1.$ Then  $$c_1(\alpha) = \frac{1}{\alpha-1}$$ so the map $\overline{c_1}$ is a homeomorphism of $S^1.$ Therefore, $w(\overline{c_1})=1$ by Lemma  \ref{winding-calculuation}.
 Now we assume  that $w(\overline{c_n})=n$. By Lemma \ref{windinginverse}, we have that $$w(\overline{c_{n+1}}) = w\left(\overline{\frac{1}{c_{n+1}}}\right).$$ For brevity, we put $$ f(\alpha)=\frac{1}{c_{n+1}(\alpha)}.$$ 
Note that the extension map $\overline{f}$ is a covering map from $S^1$ to $S^1$ and  $$f(\alpha)= \alpha + \frac{1}{c_n(\alpha)-1}$$ from the equation \eqref{eqn:inductivestep}. 
Then we claim that $w(\overline{f})=n+1.$ Since $\overline{c_n}$ is a $n$-fold covering,  we obtain that $$|\{ \alpha \in S^1 :\overline{ c_n}(\alpha)=1 \}|=n.$$
Now we write $\{\alpha \in S^1 : \overline{c_n}(\alpha)=1 \}=\{x_1,x_2, \cdots, x_n\}.$ Note that $\{x_1,x_2, \cdots, x_n\} \subset \RR$ as $\overline{c_n}(\infty)=0.$
Hence $\overline{f}(x_i)=\infty$ for all $i\in \{1,2, \cdots, n\}.$ 
Moreover $\overline{f}(\infty)=\infty$ as $\displaystyle \lim_{\alpha \to -\infty} c_n(\alpha) = \lim_{\alpha \to \infty} c_n(\alpha) = 0.$ 
Therefore these imply that   $|\{x_1, x_2, \cdots , x_n, \infty \}|=|\overline{f}^{-1}(\{\infty\})|=n+1$. Thus $w(\overline{f})=|\overline{f}^{-1}(\{\infty\})|=n+1$ by Lemma  \ref{winding-calculuation} so we are done by induction. 
\end{proof}

\begin{prop} \label{rootreal}
For each $n\in \NN,$ all roots of the polynomial $p_n(\alpha)$ are distinct real numbers.
\end{prop}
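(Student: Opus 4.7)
The plan is to read off the roots of $p_n(\alpha)$ as the $\alpha \in \RR$ for which $c_n(\alpha) = 1/2$, and then use the winding number computation of the previous lemma to count them exactly.

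First I would verify that $p_n$ is a polynomial of degree $n$. Since $l_n$ is degree $n$ with leading coefficient $(-1)^n$ and $u_n$ has degree $n-1$, the polynomial $2u_n(\alpha) - l_n(\alpha)$ has degree $n$ and leading coefficient $(-1)^{n+1}$, so $p_n(\alpha) = (-1)^{n+1}(2u_n(\alpha) - l_n(\alpha))$ is monic of degree $n$, as already noted in the setup. In particular $p_n$ has at most $n$ complex roots counted with multiplicity.

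Next I would identify the roots with preimages of $1/2$ under $\overline{c_n}$. Because $\det((B_\alpha A^{-1})^n) = 1$, we have an identity of the form $t_n l_n - u_n m_n = 1$ in $\ZZ[\alpha]$, so the polynomials $u_n$ and $l_n$ are coprime; in particular they have no common real root. Hence for $\alpha_0 \in \RR$:
\begin{itemize}
  \item if $l_n(\alpha_0) = 0$ then $u_n(\alpha_0) \neq 0$, so $p_n(\alpha_0) \neq 0$ and $\overline{c_n}(\alpha_0) = \infty \neq 1/2$;
  \item if $l_n(\alpha_0) \neq 0$ then $p_n(\alpha_0) = 0 \iff c_n(\alpha_0) = 1/2 \iff \overline{c_n}(\alpha_0) = 1/2$.
\end{itemize}
Thus the real roots of $p_n$ are exactly the elements of $\overline{c_n}^{-1}(\{1/2\}) \cap \RR$.

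Now I would invoke Lemma \ref{rotationlemma}, Lemma \ref{winding-calculuation}, and Lemma \ref{windingnumber}: the map $\overline{c_n} : S^1 \to S^1$ is an $n$-fold covering, so $|\overline{c_n}^{-1}(\{1/2\})| = n$. Moreover $\overline{c_n}(\infty) = \lim_{\alpha \to \pm \infty} c_n(\alpha) = 0 \neq 1/2$, so $\infty$ is not in the preimage; hence all $n$ preimage points lie in $\RR$. By the previous paragraph these $n$ distinct real numbers are roots of $p_n$. Since $p_n$ has degree $n$, these are all the roots, proving that $p_n$ has $n$ distinct real roots and completing the proof.

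I do not anticipate a serious obstacle here: the only nontrivial points are that $u_n$ and $l_n$ have no common real root (handled by the determinant identity, which is the mildly subtle step in the bookkeeping) and that $\infty$ does not map to $1/2$ (handled by the already-noted limit $\overline{c_n}(\infty) = 0$). Everything else reduces to combining the degree count for $p_n$ with the covering-degree count from Lemma \ref{windingnumber}.
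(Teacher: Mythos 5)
Your proof is correct and follows essentially the same route as the paper: identify the real roots of $p_n$ with $\overline{c_n}^{-1}(\{1/2\})$, use Lemma \ref{windingnumber} to conclude $\overline{c_n}$ is an $n$-fold covering and hence that this set has exactly $n$ elements, observe $\infty \notin \overline{c_n}^{-1}(\{1/2\})$, and compare with $\deg p_n = n$. The only difference is in a sub-step: you rule out a common zero of $u_n$ and $l_n$ via the determinant identity $t_n(\alpha) l_n(\alpha) - u_n(\alpha) m_n(\alpha) = 1$, whereas the paper proves this by induction using the recursion for $(u_n, l_n)$; your version is slightly cleaner and applies directly over $\CC$.
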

\begin{proof}
First we claim that for each $n\in \NN$, the set $$\{ \alpha_0 \in \CC : u_n(\alpha_0) = 0 , \ l_n(\alpha_0) = 0\}$$ is empty. When $n=1$, the statement is trivial since $u_1(\alpha) = -1.$ Now we assume that the statement holds for $n$. Choose a number  $x\in \RR$  with $u_{n+1}(x)=0$. 
Then  $u_n(x)-l_n(x)=0$ so we have that  $u_n(x) = l_n(x) \neq 0 $ by the induction hypothesis. This implies 
$$l_{n+1}(x) = x u_n(x)+(1-x)l_n(x)=u_n(x) \neq 0 .$$
Therefore the claim is proved. 

By the claim, for each $n\in \NN$ and for each $x\in \RR,$ $p_n(x) = 0 $ if and only if $c_n(x) = 1/2$. Hence fix $n\in \NN$ and we consider the set $\overline{c_n}^{-1}(\{1/2\}).$ 
Then $\infty \notin \overline{c_n}^{-1}(\{1/2\})$ and so $\overline{c_n}^{-1}(\{1/2\})\subset \RR.$ Moreover since $\overline{c_n}$ is an $n$-fold covering map by Lemma  \ref{windingnumber}, the cardinality of the set $\overline{c_n}^{-1}(\{1/2\})$ is exactly $n.$ 
This implies that  there are $n$ distinct real roots of the polynomial $p_n.$ Since the polynomial  $p_n$ is of degree $n$, this completes the proof. 
\end{proof}

All roots of $p_n(\alpha)$ are positive. This follows from the fact that the polynomial $p_n(\alpha)$ has alternating coefficients. In other words, $$p_n(\alpha) = \sum_{k=0} ^n (-1)^{n-k} c_k \alpha^k $$ where $c_k \in \NN$.
Although it is quite surprising, we omit proof because it will not be needed.

 To verify that  the sequence $\{ \alpha_n \}$ is increasing, we need the following lemma.

\begin{lem} \label{less1/2}
 For all $n\in \NN$, $0 < (B_4 A^{-1})^n(0) < 1/2$.
\end{lem}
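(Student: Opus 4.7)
The plan is to exhibit an explicit closed form for $(B_4 A^{-1})^n(0)$ and verify it by induction, from which the bound is immediate. The guiding observation is that $M := B_4 A^{-1} = \begin{bmatrix} 1 & -1 \\ 4 & -3 \end{bmatrix}$ has trace $-2$, hence is parabolic in $\PR$. Solving $M(z) = z$ gives $4z^2 - 4z + 1 = 0$, whose unique root is $z = 1/2$. So $1/2$ is the unique fixed point of $M$. Under the conjugation $\phi(z) = 1/(1/2 - z)$ that sends $1/2$ to $\infty$, a direct Möbius computation shows $\phi M \phi^{-1}(w) = w + 4$. Consequently $\phi(M^n(0)) = \phi(0) + 4n = 2 + 4n$, which inverts to
\[ M^n(0) = \frac{1}{2} - \frac{1}{4n+2} = \frac{n}{2n+1}. \]

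Once this closed form is in hand, the bound $0 < M^n(0) < 1/2$ is trivial from $0 < 2n < 2n+1$. For the paper I would present the self-contained inductive version: state the claim $M^n(0) = n/(2n+1)$ and verify it directly. The base case is $M(0) = (-1)/(-3) = 1/3 = 1/(2 \cdot 1 + 1)$. For the inductive step, assuming $M^n(0) = n/(2n+1)$, compute
\[ M^{n+1}(0) = M\!\left(\frac{n}{2n+1}\right) = \frac{\tfrac{n}{2n+1} - 1}{4 \cdot \tfrac{n}{2n+1} - 3} = \frac{-(n+1)}{-(2n+3)} = \frac{n+1}{2(n+1)+1}, \]
closing the induction, after which $0 < n/(2n+1) < 1/2$ finishes the proof.

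There is no real obstacle. The only task is to guess (or derive, via the parabolic conjugation above) the correct closed form; its verification is a one-line Möbius calculation, and the desired inequality is immediate. I would put the inductive proof in the body of the paper and, if space permits, mention the conjugation $\phi M \phi^{-1}(w) = w + 4$ as conceptual motivation for why the orbit stays trapped on one side of the unique fixed point $1/2$ and converges to it monotonically.
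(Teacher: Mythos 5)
Your proof is correct and is essentially the same as the paper's: both normalize the parabolic matrix $B_4A^{-1}$ (the paper via the Jordan decomposition $SJS^{-1}$ with $J=-A^{-1}$, you via the M\"obius conjugation to $w\mapsto w+4$) to obtain the identical closed form $(B_4A^{-1})^n(0)=n/(2n+1)$, from which the bound is immediate. Your added inductive verification of the closed form is a harmless, slightly more elementary packaging of the same computation.
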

\begin{proof}
From the Jordan decomposition, we obtain $$B_{4} A^{-1} = \begin{bmatrix}
1 & -1 \\ 4 & -3
\end{bmatrix} = SJS^{-1}$$
 where
$$ 
J = \begin{bmatrix}
-1 & 1 \\ 0 & -1
\end{bmatrix} = - A^{-1} \textnormal{ and }
S = \begin{bmatrix}
1 & \frac{1}{2} \\ 2 & 0 
\end{bmatrix}.$$
Thus, for each $n\in \NN,$ we have that 
$$(B_4 A^{-1}) ^n = S J^n S^{-1} = (-1)^n \begin{bmatrix}
1-2n & n \\ -4n & 1+2n
\end{bmatrix}. $$
Hence, for each $n\in\NN,$  $(B_4 A^{-1})^n ( 0) = \frac{n}{2n+1} < \frac{1}{2}.$
\end{proof}

\begin{prop} \label{MaximalIncreasing}
 For each $n\in \NN,$ we denote the maximal root of $p_n$ by  $\alpha_n.$  Then $\alpha_n < \alpha_{n+1}$ for all $n\in \NN$.
\end{prop}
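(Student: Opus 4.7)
I plan to prove the proposition by induction on $n$, strengthening the hypothesis to $\alpha_n \in (2, 4)$. The base case is immediate, since $c_1(\alpha) = 1/(\alpha - 1)$ gives $\alpha_1 = 3 \in (2,4)$.

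For the inductive step, the crucial intermediate fact I would establish is that $c_n$ decreases strictly from $1/2$ to $0$ on the open interval $(\alpha_n, \infty)$, taking all its values in $(0, 1/2)$. To extract this from earlier work, I would combine Lemma \ref{rotationlemma} with Lemma \ref{windingnumber}: the map $\overline{c_n}$ is a clockwise $n$-fold covering with $\overline{c_n}(\infty) = 0$, so $\overline{c_n}^{-1}(\{1/2\})$ consists of exactly $n$ points in $\RR$ and $\overline{c_n}^{-1}(\{0\})$ of $n-1$ finite points together with $\infty$. A winding count between consecutive preimages of $1/2$ in $\RR$ shows that all $n-1$ finite zeros and all $n$ poles of $c_n$ are squeezed into the bounded intervals between those preimages, leaving $(\alpha_n,\infty)$ free of zeros and poles; the clockwise property then forces strict monotone descent from $1/2$ at $\alpha_n^+$ to $0$ as $\alpha \to \infty$, with image in $(0,1/2)$.

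With this in hand, I would feed it into the recurrence from equation \eqref{eqn:inductivestep},
\[ c_{n+1}(\alpha) = \frac{1}{\alpha + \frac{1}{c_n(\alpha) - 1}}. \]
For $\alpha \in (\alpha_n, \infty)$ one has $c_n(\alpha) - 1 \in (-1, -1/2)$, so $1/(c_n(\alpha) - 1) \in (-2, -1)$ and $\alpha + 1/(c_n(\alpha)-1) \in (\alpha - 2, \alpha - 1)$, which is positive because $\alpha > \alpha_n > 2$. Hence $c_{n+1}$ is continuous and positive on $[\alpha_n, 4]$. Evaluating at the endpoints, $c_{n+1}(\alpha_n) = 1/(\alpha_n - 2) > 1/2$ (using $\alpha_n < 4$), while $c_{n+1}(4) = (n+1)/(2n+3) < 1/2$ by Lemma \ref{less1/2}. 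The intermediate value theorem then produces a root of $p_{n+1}$ in $(\alpha_n, 4)$, giving $\alpha_{n+1} > \alpha_n$.

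To close the induction I would verify $\alpha_{n+1} < 4$. For $\alpha > 4$, monotonicity of $c_n$ on $(\alpha_n, \infty)$ gives $c_n(\alpha) < n/(2n+1)$, which implies $\alpha + 1/(c_n(\alpha) - 1) > 4 - (2n+1)/(n+1) = (2n+3)/(n+1) > 2$, and hence $c_{n+1}(\alpha) < 1/2$ throughout $(4, \infty)$; combined with $c_{n+1}(4) < 1/2$, this rules out any root of $p_{n+1}$ in $[4, \infty)$. The main obstacle I anticipate is the covering/orbit-counting argument underlying the intermediate claim: carefully using the degree-$n$ clockwise covering structure to localize the preimages of $0$ and $\infty$ inside $(\beta_1, \alpha_n)$, and converting the $S^1$-level clockwise statement into honest monotonicity on the real interval $(\alpha_n, \infty)$, demands some care at the wrap-around through $\infty$.
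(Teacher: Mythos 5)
Your proposal is correct and reaches the same two endpoint values as the paper, namely $c_{n+1}(\alpha_n)=1/(\alpha_n-2)>1/2$ and $c_{n+1}(4)=(n+1)/(2n+3)<1/2$, but the route between them is genuinely different. The paper never establishes any monotonicity of $c_{n+1}$ on a real interval: it gets $\alpha_n<4$ for free from the Brenner--Sanov bound (the roots of $p_n$ are relation numbers, hence lie in $(-4,4)$), and then shows $1/2\in c_{n+1}([\alpha_n,4])$ by a topological argument on the circle --- choosing auxiliary points $\beta_0,\beta_1\in[\alpha_n,4]$, noting that $c_{n+1}([\beta_0,\beta_1])$ must be one of the two closed arcs of $S^1$ bounded by $c_{n+1}(\alpha_n)$ and $c_{n+1}(4)$, and using clockwiseness at the single point $\beta_0$ to exclude the arc missing $1/2$. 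You instead prove the stronger structural fact that $c_n$ is strictly decreasing with values in $(0,1/2)$ on all of $(\alpha_n,\infty)$, after which the induction collapses to the ordinary intermediate value theorem on $\RR$, and you recover $\alpha_{n+1}<4$ internally rather than by citation; this is cleaner and more self-contained. Your intermediate claim is true and your covering-plus-clockwise strategy does prove it, but one sentence in the sketch must be repaired: ``all $n-1$ finite zeros and all $n$ poles of $c_n$ are squeezed into the bounded intervals between those preimages'' is not literally correct --- already for $n=1$ the unique pole of $c_1(\alpha)=1/(\alpha-1)$ sits at $\alpha=1$, in the unbounded component to the left of the unique $1/2$-preimage $\alpha_1=3$. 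The correct localization is this: writing $r_1<\cdots<r_n=\alpha_n$ for the real solutions of $c_n=1/2$, the open arc of the domain circle from $\alpha_n$ through $\infty$ to $r_1$ maps homeomorphically and clockwise onto $S^1\setminus\{1/2\}$; since the value at the domain point $\infty$ is $0$, the sub-arc $(\alpha_n,+\infty)$ is carried onto the clockwise arc from $1/2$ to $0$, which is exactly the real interval $(0,1/2)$, while the single pole and single zero belonging to this arc fall in $(-\infty,r_1)\cup\{\infty\}$. With that repair your argument is complete.
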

\begin{proof}
 We use mathematical induction on $n$.
 Since $p_1(\alpha)=\alpha - 3$ and $p_2(\alpha)=\alpha^2 -5 \alpha + 5$, the maximal roots are $3$ and $(5+\sqrt{5})/2,$ respectively. 
Since $3 < (5+\sqrt{5})/2$, our claim holds for $n = 1$. 
Now we assume that $\alpha_{i-1} <\alpha_i$ for all $i\in \{2, 3, \cdots, n\}.$ 
Then we want to show that $ \alpha_n < \alpha_{n+1}. $ 
As $\alpha_1 = 3$, $3<\alpha_n$ and since the real number $\alpha_n$ is a relation number, $\alpha_n < 4.$ Therefore  $3<\alpha_n < 4.$ On the other hand, as discussed in the proof of  Proposition \ref{rootreal},  $c_n(\alpha_n)=(B_{\alpha_n}A^{-1})^n \cdot 0 = 1/2$ and  
$$(B_{\alpha_n}A^{-1})^{n+1} ( 0) = \begin{bmatrix}
1 & -1 \\ \alpha_n & 1-\alpha_n
\end{bmatrix} \cdot \frac{1}{2}= \frac{1}{\alpha_n - 2}.$$ 
Hence $ 1/2 < (B_{\alpha_n}A^{-1})^{n+1}( 0) < 1$ as $3 < \alpha_n < 4.$ 
Also from Lemma \ref{less1/2}, $0<(B_4 A^{-1})^{n+1}( 0)<1/2.$
Therefore, $$0 < c_{n+1}(4) <\frac{1}{2} < c_{n+1}(\alpha_n) < 1. $$

Now we claim that  $1/2\in c_{n+1}([\alpha_n,4]).$  First since $\overline{c_{n+1}}$ is a $(n+1)$-fold covering  by Lemma \ref{windingnumber}, the set $\overline{c_{n+1}}^{-1}(\{c_{n+1}(4)\})$ has exactly $n+1$ distinct elements. 
Moreover, as $\overline{c_{n+1}}(\infty)=0$, $\infty \notin \overline{c_{n+1}}^{-1}(\{c_{n+1}(4)\})$ and so $\overline{c_{n+1}}^{-1}(\{c_{n+1}(4)\})\subset \RR$. 
Let $\beta_1$ be the smallest element of the intersection $[\alpha_n, 4]\cap \overline{c_{n+1}}^{-1}(\{c_{n+1}(4)\}).$ Similarly, we consider the set  $\overline{c_{n+1}}^{-1}(\{c_{n+1}(\alpha_n)\}).$ 
The set $\overline{c_{n+1}}^{-1}(\{c_{n+1}(\alpha_n)\})$ is also a finite set of real numbers. Let $\beta_0$ be the smallest element of the intersection of $\overline{c_{n+1}}^{-1}(\{c_{n+1}(\alpha_n)\})\cap [\alpha_n, \beta_1].$ 
Note that    $$0 < c_{n+1}(\beta_1) <\frac{1}{2} < c_{n+1}(\beta_0) < 1 $$ 
since $c_{n+1}(\beta_1)=c_{n+1}(4)$ and $c_{n+1}(\beta_0)=c_{n+1}(\alpha_n).$

Now we define $S_1 =\{ r \in \RR : c_{n+1}(4) < r <c_{n+1} (\alpha_n) \}$ and $S_2$ to be the interior of  $S^1 - S_1.$  Since $c_{n+1}$ is a covering map and is an open map, then the set $c_{n+1}((\beta_0,\beta_1))$ is open and connected. 
Moreover, by the choices of $\beta_0$ and $\beta_1$, $c_{n+1}((\beta_0,\beta_1) )\cap \{c_{n+1}(\beta_0), c_{n+1}(\beta_1)\}=\emptyset.$ 
Hence, one of the connected components of $S^1-\{c_{n+1}(\beta_0), c_{n+1}(\beta_1)\}$ contains the set $c_{n+1}((\beta_0,\beta_1))$, namely either  $c_{n+1}((\beta_0,\beta_1)) \subset S_1$ or $c_{n+1}((\beta_0,\beta_1))\subset S_2.$
Note that since  the interval $[\beta_0, \beta_1]$ is compact and connected, so is the image $c_{n+1}([\beta_0,\beta_1]) $. As $c_{n+1}(\beta_1)=c_{n+1}(4)$ and $c_{n+1}(\beta_0)=c_{n+1}(\alpha_n),$ either  $c_{n+1}([\beta_0,\beta_1])= \overline{S_1}$ or $c_{n+1}([\beta_0,\beta_1])= \overline{S_2}.$
\begin{figure}[h!]
\begin{center}
	\begin{tikzpicture}
		\draw [red,thick,domain=0:300] plot ({2*cos(\x)}, {2*sin(\x)});
		\draw [blue,thick,domain=300:340] plot ({2*cos(\x)}, {2*sin(\x)});
		\draw [red,thick,domain=340:360] plot ({2*cos(\x)}, {2*sin(\x)});
		\draw [black,fill] (0,-2) circle [radius=0.05] node [black,below] {$0$} -- (0,0) node [black,left] {$\ell$} -- (0,2) circle [radius=0.05] node [black,above] {$\infty$}; 
		\draw [black,fill] (2,0) circle [radius=0.05] node [black,right] {$1$};
		\draw [black,fill] (-2,0) circle [radius=0.05] node [black,left] {$-1$};
		\draw [black,fill] (1.87,-0.7) circle [radius=0.05] node [black,right] {$c_{n+1}(\alpha_n)$};
		\draw [black,fill] (1.05,-1.71) circle [radius=0.05] node [black,below] {$\ \ c_{n+1}(4)$};
		\draw [black,fill] (1.41,-1.41) circle [radius=0.05] node [black,above left] {$\frac{1}{2}$};
\end{tikzpicture}
\caption{The Candidates of the image of a closed interval $[\alpha_n,4]$ under the map $c_{n+1}$. 
The blue one is set $S_1$ and red one is $S_2$. The black vertical line is the geodesic $\ell$.
The blue arc and red arc meet at $c_{n+1}(\alpha_n)$ and $c_{n+1}(4)$.}
\end{center}
\end{figure}
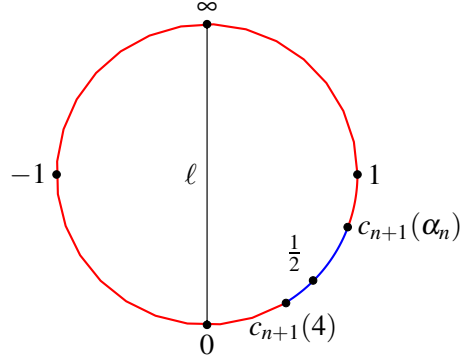
 To prove that the number $1/2$ belongs to the image $c_{n+1}([\beta_0,\beta_1])$, it suffices to show that the image can not be $\overline{S_2}$.
By Lemma \ref{rotationlemma}, $c_{n+1}$ is a clockwise map so $c_{n+1}$ is strictly decreasing at $\beta_0$ because $c_{n+1}(\beta_0) \neq \infty$.
 Thus, $c_{n+1}$ is strictly decreasing on the interval $[\beta_0,\beta_0+\epsilon_0]$ for some $\epsilon_0>0$.
Since $\beta_0 < 4$, we can choose $\epsilon_1$ so that $\beta_0 + \epsilon_1 < 4$. 
Moreover, we can choose $\epsilon_2$ such that $\epsilon_2 < c_{n+1}(\beta_0) - c_{n+1}(\beta_1)$.
Since $c_{n+1}$ is continuous, there exists a $\delta>0$ such that $$|c_{n+1}(\beta_0)-c_{n+1}(y)|<\epsilon_2$$ whenever $|\beta_0-y|<\delta$.
Let $\epsilon$ be the minimum of $\epsilon_0,$ $\epsilon_1,$ and $\delta$.
Then by the choice of $\epsilon$, the image $c_{n+1}([\beta_0, \beta_0 + \epsilon])$ must contain a point in $S_1$.
This shows that the only possible image of $[\beta_0,\beta_1]$ under $c_{n+1}$ is $\overline{S_1}.$  
Thus $1/2$ belongs to $c_{n+1}([\beta_0,\beta_1])$ and so to $c_{n+1}([\alpha_n,4]).$
 
 This implies that there exists $\alpha_{+}$ such that $\alpha_n < \alpha_{+} < 4$ and $p_{n+1}(\alpha_{+})=0$. 
Since $\alpha_{n+1}$ is the maximal root of $p_{n+1}(\alpha)$, we have $$\alpha_n < \alpha_{+} \leq \alpha_{n+1}. $$
\end{proof}

 To prove that the sequence $\{ \alpha _n \}_{n=1}^\infty$ converges to $4$, we use the notion of rotation numbers. 

\begin{lem} \label{denselemma}
Let $D= \left \{ \alpha\in (3,4) :  \{ (B_{\alpha} A^{-1})^n(0) : n \in \NN \} \text{ is \ dense \ in} \ S^1\right \}.$ The set $D$ is dense in the open interval $(3,4)$.
\end{lem}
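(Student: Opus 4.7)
The plan is to use Poincar\'e's rotation number. For $\alpha\in(3,4)$, set $M_\alpha := B_\alpha A^{-1}$; then $\tr(M_\alpha)=2-\alpha\in(-2,-1)$, so $M_\alpha$ is elliptic in $\PR$. The fixed points of $M_\alpha$ on $\CC\cup\{\infty\}$ satisfy $\alpha z^2-\alpha z+1=0$, whose discriminant $\alpha^2-4\alpha$ is negative on $(3,4)$. Hence $M_\alpha$ fixes a unique point $p_\alpha\in\HH^2$ and has no fixed point on $S^1=\RR\cup\{\infty\}$; in particular $0$ is never a fixed point. Conjugating in $\PR$ so that $p_\alpha\mapsto i$, the element $M_\alpha$ becomes a hyperbolic rotation about $i$, whose induced action on $\partial\HH^2=S^1$ is conjugate (by a homeomorphism of $S^1$) to a genuine linear rotation.

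Next I would show that the associated rotation number $\rho(\alpha)\in\RR/\ZZ$ is continuous and strictly monotonic on $(3,4)$. The rotation angle $\theta(\alpha)$ is determined by the eigenvalues of a matrix lift of $M_\alpha$ in $\SLR$, and therefore by the trace: the relation $2\cos(\theta(\alpha)/2)=\tr(M_\alpha)=2-\alpha$ (up to the standard $\pm$ ambiguity from $\SLR\twoheadrightarrow\PR$) shows that $\theta(\alpha)$, and hence $\rho(\alpha)=\theta(\alpha)/(2\pi)$, is a continuous strictly monotonic function of $\alpha\in(3,4)$. In particular $\rho((3,4))$ is a nondegenerate open arc in $\RR/\ZZ$. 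The irrationals are dense in any such arc, and since $\rho$ is continuous and monotonic (hence a homeomorphism onto its image), the preimage $\rho^{-1}(\RR\setminus\QQ)$ is dense in $(3,4)$.

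Finally, for $\alpha\in\rho^{-1}(\RR\setminus\QQ)$ the map $M_\alpha\colon S^1\to S^1$ is topologically conjugate to an irrational rotation. By the classical Kronecker--Weyl theorem, every orbit of an irrational rotation on $S^1$ is dense; since $0$ is not a fixed point of $M_\alpha$ (from the discriminant computation above), the orbit $\{M_\alpha^n(0):n\in\NN\}$ is dense in $S^1$. Therefore $\rho^{-1}(\RR\setminus\QQ)\subseteq D$, which proves that $D$ is dense in $(3,4)$.

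The main obstacle is the passage from the matrix data to the topological rotation number: one must verify that the conjugation sending $p_\alpha$ to $i$ can be arranged to depend continuously on $\alpha$ (so that $\rho$ is continuous) and then confirm non-constancy of $\rho$ through the trace formula. Once that dependence is established, everything else follows from standard facts about elliptic M\"obius transformations and irrational circle rotations.
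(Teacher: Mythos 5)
Your proposal is correct and follows essentially the same route as the paper: both identify $B_\alpha A^{-1}$ as elliptic, read off its rotation number from the trace (the paper states it as $\frac{1}{\pi}\cos^{-1}\bigl(\frac{2-\alpha}{2}\bigr)$, which matches your trace relation), use continuity and monotonicity in $\alpha$ to get a dense set of parameters with irrational rotation number, and conclude density of the orbit of $0$ from the theory of irrational circle rotations. Your version is somewhat more detailed (checking that $0$ is not fixed and that the conjugating map varies continuously), but the underlying argument is the same.
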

\begin{proof}
 Recall that for each $\alpha\in (3,4),$ the matrix $$B_{\alpha} A^{-1} = \begin{bmatrix}
1 & -1 \\ \alpha & 1 - \alpha
\end{bmatrix} $$ is elliptic and the rotation number $\operatorname{rot}(B_{\alpha} A^{-1})$ is given by $\frac{1}{\pi}\cos^{-1} \left( {\frac{2-\alpha}{2}} \right ).$ 
Since a map from $(3,4)$ to $\RR$ defined by $\alpha \mapsto \frac{1}{\pi}\cos^{-1} \left ( {\frac{2 - \alpha}{2}} \right )$ is a homeomorphism, the set $$ \left\{ \alpha \in (3,4) : \frac{1}{\pi} \cos^{-1} \left ( {\frac{2-\alpha}{2}} \right ) \in \RR - \QQ \right\}$$
is dense in $(3,4)$. 
Since $B_{\alpha} A^{-1}$ is a smooth map for all $\alpha\in (3,4)$, by the classification of circle actions, we have that $\operatorname{rot}(B_{\alpha} A^{-1}) \in \RR - \QQ$ if and only if the set $\{ (B_{\alpha} A^{-1})^n(0) : n \in \ZZ \}$ is dense in $S^1$.
Moreover, as $\{ (B_{\alpha} A^{-1})^n ( 0) : n \in \ZZ \}$ is dense in $S^1$, the set $\{ (B_{\alpha} A^{-1})^n (0) : n \in \NN \}$ is dense in $S^1$.
\end{proof}

\vspace{0.6cm}

\begin{proof}[\textit{Proof of the Main Theorem \ref{themainthm}.}] 
\phantomsection
\label{lastproof}
 Choose any $\epsilon$ such that $0<\epsilon < 1$. To complete the proof, we have to prove that there exists $N \in \NN$ such that $ 4-\epsilon < \alpha_N $ since the sequence $\{ \alpha_n \}_{n=1}^\infty$ is increasing. 
By Lemma \ref{denselemma}, the set $ \left \{ (B_a A^{-1})^n ( 0) : n \in \NN \right \}$ is dense in $S^1$ for some real number $a$ with $4-\epsilon < a < 4$. Now choose $N$ so that $$ 1/2 < (B_a A^{-1})^N ( 0) < 1.$$
By Lemma \ref{less1/2}, we obtain $(B_4 A^{-1})^N \cdot 0 < 1/2$. Thus, the image of closed interval $[a,4]$ under $c_N$ must contain $1/2$ since the map $c_N$ is clockwise.
 Hence, this implies that there is a real number $a_0$ such that  $a<a_0<4$ and $(B_{a_0} A^{-1} )^N (0) = 1/2$. 
By the choice of $a_0$, the real number $a_0$ is one of the solutions of the polynomial $p_N.$ Since
$$ 4-\epsilon < a < a_0 \leq \alpha_N <4,$$ this completes the proof.
\end{proof}

\bibliography{biblio}
\bibliographystyle{abbrv}

\end{document}